\newcommand{\Trace}{\mathop{\bf tr}}
\newcommand{\tr}{\mathsf{ T}}
\newcommand{\RR}{\mathbb{R}}
\newtheorem{theorem}{Theorem}
\newtheorem{remark}{Remark}[section]
\newtheorem{lemma}{Lemma}
\newtheorem{corollary}{Corollary}[section]
\newtheorem{assumption}{Assumption}[section]
\newcommand{\bsf}[1]{\textsf{\LARGE\textbf{#1}}}
\newcommand{\bigline}{\\ \vrule height2pt width 5 in depth 0pt\newline\noindent}
\newcommand{\pstar}{p^\star}
\newcommand{\gstar}{g^\star}
\newcommand{\Xstar}{X^\star}
\newcommand{\ystar}{y^\star}
\newcommand{\Amap}{\mathcal{A}}
\newcommand{\Ajmap}{\Amap^*}
\newcommand{\Dist}{\mathrm{dist}}
\DeclareMathOperator*{\argmin}{\arg\!\min}
\newcommand{\innerproduct}[2]{\left \langle #1, #2 \right\rangle }
\definecolor{moccasin}{rgb}{0.98, 0.92, 0.84}
\newtcolorbox{mybox}{colback=moccasin,
colframe=moccasin}
\crefname{equation}{}{}
\crefname{theorem}{Theorem}{Theorems}
\crefname{corollary}{Corollary}{Corollaries}
\crefname{example}{Example}{Examples}
\crefname{assumption}{Assumption}{Assumptions}
\crefname{lemma}{Lemma}{Lemmas}
\crefname{proposition}{Proposition}{Propositions}
\crefname{figure}{Figure}{Figures}
\crefname{table}{Table}{Tables}
\crefname{section}{Section}{Sections}
\crefname{appendix}{Appendix}{Appendices}
\Crefname{equation}{}{}
\Crefname{theorem}{Theorem}{Theorems}
\Crefname{corollary}{Corollary}{Corollaries}
\Crefname{example}{Example}{Examples}
\Crefname{lemma}{Lemma}{Lemma}
\Crefname{proposition}{Proposition}{Propositions}
\Crefname{figure}{Figure}{Figures}
\Crefname{table}{Table}{Tables}
\Crefname{section}{Section}{Sections}
\Crefname{appendix}{Appendix}{Appendices}
\newcommand{\sn}{\mathbb{S}^n}
\newcommand{\snp}{\mathbb{S}^n_+}
\newcommand{\dsolution}{\Omega_{\mathrm{D}}}
\newcommand{\bigO}{\mathcal{O}}
\newcommand{\xstar}{x^\star}
\newcommand{\calL}{\mathcal{L}}
\newcommand{\calS}{\mathcal{S}}
\newcommand{\SBM}{\textsf{SBM}}
\newcommand{\alg}{\texttt{BALA}}
\newcommand{\CGAL}{\texttt{CGAL}}
\begin{document}

\title{
 \bf \Large A Bundle-based Augmented Lagrangian Framework: Algorithm,  Convergence, and Primal-dual Principles
\thanks{This work is supported by NSF ECCS 2154650, NSF CMMI 2320697, and NSF CAREER 2340713. Emails: fliao@ucsd.edu; zhengy@ucsd.edu. } 
}
\author[1]{Feng-Yi Liao}
\author[1]{Yang Zheng}
\affil[1]{\small Department of Electrical and Computer Engineering, University of California San Diego}
\date{\small \today \vspace{-5ex}} 

\maketitle

\begin{abstract}
    We propose a new bundle-based augmented Lagrangian framework for solving constrained convex problems. Unlike the classical (inexact) augmented Lagrangian method (ALM) that~has a nested double-loop structure, our framework features a \textit{single-loop} process. Motivated by the proximal bundle method (PBM), we use a \textit{bundle} of past iterates to approximate the subproblem in ALM to get a computationally efficient update at each iteration. We establish sub-linear convergences for primal feasibility, primal cost values, and dual iterates under mild assumptions. With further regularity conditions, such as quadratic growth, our algorithm enjoys \textit{linear} convergences. Importantly, this linear convergence can happen for a class of conic optimization problems, including semidefinite programs. Our proof techniques leverage deep connections with inexact ALM and primal-dual principles with PBM.   
\end{abstract}

\section{Introduction}
We consider the following constrained convex optimization problem with decision variable $x \in \mathbb{R}^n$:  
\begin{equation}
    \label{eq:primal}
    \begin{aligned}
        \pstar := \min_{x \in \RR^n} & \quad \innerproduct{c}{x}\\ \mathrm{subject~to} & \quad \Amap x = b, \\
        & \quad  x \in \Omega,
    \end{aligned}
    \tag{P}
\end{equation}
where $\langle \cdot, \cdot \rangle$ denotes the standard inner product in $\mathbb{R}^n$ and the problem data includes a linear map $\Amap:\RR^n \to \RR^m$, two constant vectors $b \in \mathbb{R}^m$ and $c \in \mathbb{R}^n$, and a compact convex set $\Omega \subseteq \RR^n$. We call \Cref{eq:primal} as the \textit{primal} formulation. Any optimization problem can be associated with a \textit{Lagrange dual} formulation. The dual of \Cref{eq:primal} plays an important role in our algorithmic design, and the dual problem \cref{eq:dual}~will be introduced in \Cref{subsection:bundle}. 
{We assume the slater's condition holds for \cref{eq:primal}, and thus its dual is solvable and strong duality holds \cite[Section 5.2.3]{boyd2004convex}.}

\vspace{3pt}
\noindent \textbf{Applications.} Problem \eqref{eq:primal} covers almost all conic programs which is a very general class of convex optimization problems \cite{wolkowicz2012handbook}. 
For example, a standard linear program (LP) is in the form of \eqref{eq:primal} with $\Omega = \{x \in \RR^n : x_i \geq 0, i=1,\ldots,n\}$. If an optimal solution $\xstar$  satisfies $\|\xstar\| \leq \alpha$, we can replace $\Omega$ by the compact set $\{x\in \RR^n : x \geq0, \|x\| \leq \alpha \}$. Adding a redundant constraint to guarantee compactness is common in designing algorithms for solving conic programs \cite{hough2024primal,liao2023overview,ding2023revisiting,yurtsever2021scalable}. 
Problem \eqref{eq:primal} has found increasing applications in signal processing, robotics,  machine learning, and control theory  \cite{wolkowicz2012handbook,zheng2021chordal}. Classical applications include combinatorial problems such as max-cut \cite{goemans1995improved}, control problems such as searching for a Lyapunov function \cite{papachristodoulou2002construction}, graph problems such as community detection \cite{abbe2014decoding}, data science problems such as phase-retrieval \cite{candes2013phaselift} and low-rank matrix recovery \cite{chen2013low}. More recently, we have seen emerging applications in automated algorithm analysis 
 such as performance estimation  \cite{goujaud2024pepit}, and trustworthy machine learning such as Lipschitz constant estimation~\cite{wang2024scalability}, and many others.

\vspace{3pt}
\noindent \textbf{Augmented Lagrangian method}. Many algorithms have been developed to solve \cref{eq:primal} \cite{wolkowicz2012handbook,zheng2021chordal}.~In this paper, we focus on the augmented Lagrangian method (ALM), which is a conceptually simple yet algorithmically powerful framework for constrained optimization \cite{rockafellar1976augmented,hestenes1969multiplier,powell1969method}. For iterations $k = 1,2,\ldots,$ the ALM has two steps
\begin{subequations}
    \label{eq:alm-steps}
    \begin{align}
        x_{k+1} & \in  \argmin_{x \in \Omega}\;
        \calL_{\rho}(x,y_k),
        \label{eq:ALM-exact-min}\\
            y_{k+1} & = y_k + \rho(b-\Amap x_{k+1}) \label{eq:ALM-exact-update},
    \end{align}
\end{subequations}
where $\calL_\rho$ is the augmented Lagrangian function defined as 
\begin{equation}
\begin{aligned}
\label{eq:AL}
        \calL_\rho(x,y) = & \innerproduct{c}{x} + \innerproduct{y}{b-\Amap x} + \frac{\rho}{2}\|b - \Amap x \|^2,  
\end{aligned}    
\end{equation}
with a fixed constant $\rho >0$. The ALM is a primal and dual framework: step \eqref{eq:ALM-exact-min} aims to get a primal candidate, and \eqref{eq:ALM-exact-update} is a dual ascent update. It is also closely related to the proximal point method (PPM) \cite{rockafellar1976monotone}.
The iterations in \cref{eq:alm-steps} are conceptually very simple and enjoy fast and stable convergence behaviors for convex optimization. 

However, one main computational difficulty for the ALM lies in finding an exact solution of~\eqref{eq:ALM-exact-min}. As already suggested by \cite{rockafellar1976augmented}, we often consider an inexact version of ALM (iALM), which replaces \eqref{eq:ALM-exact-min} by \vspace{-1mm}
\begin{equation}
\label{eq:ALM-inexact-min}
    x_{k+1} \approx  \argmin_{x \in \Omega}\; \calL_{\rho}(x,y_k) 
\end{equation}
with $x_{k+1}$ being an inexact solution. It is shown by many subsequent studies  \cite{luque1984asymptotic,cui2019r,xu2021iteration,liao2024inexact} that the iALM has good convergence properties when controlling the inexactness properly. Some recent solvers have been developed under the iALM framework \cite{chen2016semismooth,sun2020sdpnal+,yurtsever2019conditional}. Despite the great advances, the iALM does not specify how to solve \eqref{eq:ALM-inexact-min}. An efficient strategy for \cref{eq:ALM-inexact-min} is often problem-dependent and requires special care.

\vspace{3pt}
\noindent \textbf{Related works}.  One main approach to acquiring an acceptable $x_{k+1}$ is to develop another iterative algorithm as a subroutine, such as the accelerated proximal gradient methods \cite{beck2009fast} or Frank-Wolf methods \cite{frank1956algorithm}. The semi-smooth Newton method in \cite{yang2015sdpnal+,zhao2010newton} and Nesterov's optimal first-order method in \cite{lan2016iteration} are another two good choices. 
This approach is useful when one can exploit the problem-specific properties to get a solution to  \eqref{eq:ALM-inexact-min} efficiently. However, a general solution strategy is nontrivial. 
Another approach is to design a sequence of simpler sets $\{\Omega_{k}\} $ to approximate $\Omega$ and solve the sequence of approximated problems until an acceptable solution is acquired. For instance, when set $\Omega$ involves a positive semidefinite constraint such as $\Omega = \snp \cap \mathcal{M}$ where $\mathcal{M} \subseteq \sn$ is a simple convex set, there are different approximation strategies \cite{ahmadi2017sum,zheng2022block,liao2022iterative}. A simple approach is to consider the set of diagonal dominant  matrices ($\mathrm{DD}_n :=\{X \in \sn : X_{ii} \geq \sum_{j \neq i} |X_{ij}|, \forall i = 1,\ldots,n\}$) or 
scaled diagonal dominant (SDD) matrices ($\mathrm{SDD}_n:=\{X \in \sn : \exists d \in \RR^n_{++}, \mathrm{diag}(d)X\mathrm{diag}(d) \in \mathrm{DD}_n \}$) and replace $\Omega$ by $\mathrm{DD}_n \cap \mathcal{M}$ or $\mathrm{SDD}_n \cap \mathcal{M}$. 
This leads to simpler linear programs or second-order cone programs \cite{ahmadi2017sum}. However, all these approximation strategies are largely heuristics, and no convergence guarantees are known \cite{zheng2021chordal}. 

A third approach, known as the Burer-Monteiro (BM) factorization \cite{burer2003nonlinear}, is to consider an approximation set $\Omega_r = \{V V^\tr: V \in \RR^{n \times r} \} \cap \mathcal{M}$ with $r \leq n$. Clearly, we have $\Omega_r \subseteq \Omega$ as $\{V V^\tr: V \in \RR^{n \times r} \} \subseteq \snp$, and increasing the value of $r$ improves the approximation quality. With this observation, the authors in \cite{monteiro2024low} suggest solving the restricted problem, starting with $r = 1$ and gradually increasing the value of $r$ by checking certain eigenvalue information. BM factorization-based approaches enjoy lower complexity as the dimension of $\RR^{n \times r}$ is usually much less than that of $\sn$. However, the subproblem becomes nonconvex as the variable becomes  $V \!\in\! \RR^{n \times r}$ and the cost function becomes nonconvex.  It is generally difficult to quantify the cost value gap $\calL_\rho(w_{k+1},y_k) - \min_{x\in \Omega}\calL_{\rho}(x,y_k)$. The global convergence of BM factorization-based approaches requires special treatments and is still under active investigation \cite{tang2024feasible,wang2023decomposition}.

\vspace{3pt}

\noindent \textbf{Our contributions.}  
In this paper, we move beyond the iALM and introduce a new \textit{bundle-based} augmented Lagrangian framework. 
Our key idea is to leverage \textit{a bundle of past iterates} to approximate the convex set $\Omega_k \subset \Omega$ in a principled way, which enables an exact and efficient solution to an approximated subproblem. Unlike the heuristic matrix approximations in \cite{ahmadi2017sum,zheng2022block,liao2022iterative} and the low-rank BM factorization \cite{burer2003nonlinear}, our inner approximation $\Omega_k \subset \Omega$ is guided by both \textit{current}~dual information and \textit{aggregate} primal progress. Indeed, our set $\Omega_k$ can be as simple as a \textit{line segment}~so~that each approximated subproblem admits an analytical solution with no subroutine required. Thus, our~proposed \underline{B}undle-based \underline{A}ugmented \underline{L}agrangian \underline{A}lgorithm (\texttt{BALA}) features a single-loop process, and all iterations only require simple computation. 

Under mild assumptions, we establish asymptotic and sublinear convergences for primal feasibility, primal cost values, and dual iterates (\Cref{theorem:asymptotic,theorem:convergence-bundle-dual,thm:average-iterate}). Moreover, with further regularity conditions on \textit{quadratic growth} in the dual and \textit{quadratic closeness} in the dual approximation, we establish fast linear convergences of \alg{} in \Cref{thm:linear-convergence}.  Importantly, this fast linear convergence can happen for a class of conic programs, building on recent advances \cite{ding2023revisiting,liao2024inexact}.  
Our proof techniques rely on a deep primal and dual relationship between \alg{} with the proximal bundle method \cite{diaz2023optimal}, which is analogous to the connection between the exact ALM and PPM revealed in \cite{rockafellar1976augmented}. 

Finally, our algorithm \alg{} shows superior performances in both theory and experiments compared to recent developments on the \textit{conditional-gradient} augmented Lagrangian framework~\cite{yurtsever2019conditional,yurtsever2021scalable,garber2023faster}. In theory, the dual iterates in these developments have no convergence guarantees, as the algorithms in  \cite{yurtsever2019conditional,yurtsever2021scalable,garber2023faster} behave more like a penalty method. In experiments, our algorithm \alg{} observes linear convergence for a range of conic optimization problems, while the algorithms in \cite{yurtsever2019conditional,yurtsever2021scalable,garber2023faster} only have sublinear convergences.

\vspace{3pt}
\noindent \textbf{Paper organization.} 
This paper is organized as follows. \Cref{section-preliminary} reviews the inexact ALM~and the proximal bundle method. \Cref{section:BALA} introduces our proposed algorithm \alg{}. \Cref{section:convergence} establishes the convergence guarantees of \alg{}, and numerical experiments are shown in \Cref{section:experiment}. We conclude the paper in \Cref{section:conclusion}. Further discussions, proofs, and extra experiments are provided in the~appendix.

\section{Preliminaries}
\label{section-preliminary}
Our algorithmic design is based on the Augmented Lagrangian framework \cite{rockafellar1976augmented} and is also closely related to the proximal point method \cite{rockafellar1976monotone}. 
This section reviews these two foundational frameworks. 

\subsection{Augmented Lagrangian and its inexact version}
Fix a constant $\rho > 0$. The augmented Lagrangian function $\calL_\rho(x,y)$ for \cref{eq:primal} is defined in \cref{eq:AL},~where $y \in \mathbb{R}^m$ is a dual variable associated with the equality constraint. Starting with an initial dual variable $y_1 \in \mathbb{R}^m$, the augmented Lagrangian method (ALM) generates a sequence of primal and dual iterates $(x_k,y_k), k = 1, 2, \ldots, $ via \cref{eq:ALM-exact-min,eq:ALM-exact-update}. Very often, the primal update \cref{eq:ALM-exact-min} is computationally challenging due to the (potentially complicated) convex constraint $x \in \Omega$.   

In the literature, a classical and widely studied strategy is to solve \cref{eq:ALM-exact-min} inexactly, i.e., replacing it with an inexact update \cref{eq:ALM-inexact-min}. This strategy leads to a family of inexact ALM algorithms, which traces back to the seminal work by Rockafellar \cite{rockafellar1976augmented} and was widely discussed in many later works, e.g., \cite{luque1984asymptotic,liu2019nonergodic,xu2021iteration}. It is clear that the convergence of inexact ALM depends on proper inexactness control in \cref{eq:ALM-inexact-min}. As suggested in \cite{rockafellar1976augmented}, the next point $x_{k+1}$ from \cref{eq:ALM-inexact-min} should be an $\epsilon_k$-suboptimal solution to \cref{eq:ALM-exact-min}, i.e., $x_{k+1} \in \Omega$ and 
\begin{equation}
    \label{eq:subproblem-error}
    \calL_{\rho}(x_{k+1},y_k) - \min_{x \in \Omega}\; \calL_{\rho}(x,y_k) \leq \epsilon_k.
\end{equation}
We summarize this inexact ALM procedure in \Cref{alg:alm}. 
Rockafeller \cite{rockafellar1976augmented} has established that \Cref{alg:alm} finds an optimal solution as long as  $\sum_{k=1}^{\infty} \epsilon_k < \infty$.
\begin{theorem}[{\cite[Theorem 4]{rockafellar1976augmented}}]
    \label{thm:ALM-convergence}
   Suppose the Slater's condition holds for \cref{eq:primal}. Let $\{x_k,y_k\}$ be a sequence from \Cref{alg:alm} with $\sum_{k=1}^\infty \epsilon_k < \infty$. Then, the sequence $\{y_k\}$ converges asymptotically to an optimal dual solution. Moreover, 
   for all $k \geq 1$, we have  
   \begin{subequations}
   \label{eq:ALM-residual}
       \begin{align}
           \|\Amap x_{k+1} - b\|&  =  \|y_k - y_{k+1}\|/\rho, \label{eq:ALM-affine}
            \\
            \innerproduct{c}{x_{k+1}} - \pstar & \leq \epsilon_k             +   ( \|y_k\|^2 -\|y_{k+1}\|^2)/(2\rho), \label{eq:ALM-cost}
       \end{align}
   \end{subequations}
     which confirms the convergence of the primal affine feasibility and the cost value gap, i.e., we have $\lim_{k\to \infty} \|\Amap x_{k+1} - b\| = 0$, and $\lim_{k\to \infty}  \innerproduct{c}{x_{k+1}} -p^\star = 0$. 
\end{theorem}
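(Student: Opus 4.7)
The plan is to establish the two per-iteration identities in \cref{eq:ALM-residual} by direct computation, then invoke the augmented Lagrangian--proximal point correspondence to obtain asymptotic convergence of $\{y_k\}$, and finally combine these ingredients with compactness of $\Omega$ to pass to the limit.

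First, the affine residual identity \cref{eq:ALM-affine} is immediate from the dual update rule \cref{eq:ALM-exact-update}: rearranging gives $\Amap x_{k+1} - b = (y_k - y_{k+1})/\rho$, from which the norm identity follows. Next, I would prove the cost inequality \cref{eq:ALM-cost} by evaluating $\calL_\rho(\cdot, y_k)$ at the optimal primal $\xstar$ and at the inexact iterate $x_{k+1}$. Since $\Amap \xstar = b$, we have $\calL_\rho(\xstar, y_k) = \pstar$. Using \cref{eq:ALM-affine} together with the elementary identity $\|y_{k+1}\|^2 - \|y_k\|^2 = 2\innerproduct{y_k}{y_{k+1} - y_k} + \|y_{k+1} - y_k\|^2$, the augmented Lagrangian at $x_{k+1}$ simplifies to $\calL_\rho(x_{k+1}, y_k) = \innerproduct{c}{x_{k+1}} + (\|y_{k+1}\|^2 - \|y_k\|^2)/(2\rho)$. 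Plugging these two evaluations into the $\epsilon_k$-suboptimality criterion \cref{eq:subproblem-error} with the feasible choice $x = \xstar$ yields \cref{eq:ALM-cost} after a one-line rearrangement.

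For the asymptotic convergence of $\{y_k\}$, the key idea is that the ALM iteration on the primal corresponds to an inexact proximal point iteration on the concave dual function $g(y) := \inf_{x \in \Omega} \{\innerproduct{c}{x} + \innerproduct{y}{b - \Amap x}\}$ with stepsize $1/\rho$. I would show that the $\epsilon_k$-suboptimality condition on the primal subproblem translates into a compatible accuracy criterion for the associated dual proximal step, and then invoke Rockafellar's convergence theory for the inexact PPM under summable errors: with $\sum \epsilon_k < \infty$, the sequence $\{y_k\}$ is quasi-Fej\'er monotone with respect to the set of dual optima (which is nonempty by Slater's condition), and converges to some dual optimal $\ystar$.

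Once dual convergence is in hand, the remaining limits follow. Convergence of $\{y_k\}$ implies $\|y_k - y_{k+1}\| \to 0$, so $\|\Amap x_{k+1} - b\| \to 0$ by \cref{eq:ALM-affine}. For the cost value, the upper bound in \cref{eq:ALM-cost} vanishes since $\epsilon_k \to 0$ and $\|y_k\|^2 - \|y_{k+1}\|^2 \to 0$, giving $\limsup (\innerproduct{c}{x_{k+1}} - \pstar) \leq 0$. The matching lower bound uses compactness of $\Omega$: any cluster point $\bar x$ of $\{x_{k+1}\}$ lies in $\Omega$ and satisfies $\Amap \bar x = b$, so $\bar x$ is primal feasible and $\innerproduct{c}{\bar x} \geq \pstar$. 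The main obstacle is the dual convergence step: one must carefully translate the primal $\epsilon_k$-suboptimality into a standard inexact-PPM error criterion on the dual, and then run the Fej\'er-monotonicity and cluster-point arguments on the dual side. This is the technical heart of Rockafellar's original analysis and the step I expect to require the most care.
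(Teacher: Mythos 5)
The paper never proves this theorem --- it is imported wholesale from Rockafellar \cite[Theorem 4]{rockafellar1976augmented} --- so your attempt has to be measured against Rockafellar's own argument rather than anything in the text. Your handling of the two displayed relations is correct and is exactly the standard computation: \cref{eq:ALM-affine} is a rearrangement of \cref{eq:ALM-exact-update}, and combining $\calL_\rho(\xstar,y_k)=\pstar$, the identity $\|y_{k+1}\|^2-\|y_k\|^2=2\innerproduct{y_k}{y_{k+1}-y_k}+\|y_{k+1}-y_k\|^2$, and \cref{eq:subproblem-error} with the feasible competitor $\xstar$ gives \cref{eq:ALM-cost}. The closing limit arguments are also sound once dual convergence is granted (your compactness/cluster-point lower bound is a fine alternative to the saddle-point bound $\innerproduct{c}{x_{k+1}}-\pstar\ge-\|\ystar\|\|\Amap x_{k+1}-b\|$ that the paper uses elsewhere). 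One small slip: under the paper's convention \cref{eq:PPM-subproblem}, ALM with penalty $\rho$ corresponds to a dual proximal parameter $\rho$, not $1/\rho$.

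The genuine gap sits in the step you flag as the technical heart, and it is quantitative, not merely a matter of care. The correct translation from primal suboptimality to the dual proximal step is Rockafellar's estimate
\begin{equation*}
\frac{1}{2\rho}\left\|y_{k+1}-\mathrm{prox}_{\rho g}(y_k)\right\|^2\;\le\;\calL_\rho(x_{k+1},y_k)-\min_{x\in\Omega}\calL_\rho(x,y_k)\;\le\;\epsilon_k,
\end{equation*}
so an $\epsilon_k$ function-value error buys a dual \emph{distance} error of only $\sqrt{2\rho\,\epsilon_k}$. Consequently $\sum_k\epsilon_k<\infty$ yields square-summable, not summable, proximal errors (take $\epsilon_k=k^{-2}$, so $\sqrt{\epsilon_k}=k^{-1}$). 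Both Rockafellar's inexact-PPM convergence theorem and the Type-I quasi-Fej\'er inequality you invoke, $\|y_{k+1}-\ystar\|\le\|y_k-\ystar\|+\sqrt{2\rho\epsilon_k}$, require the distance errors themselves to be summable; with only square-summable errors the abstract inexact-PPM claim is false (for the zero function the prox map is the identity, and $y_{k+1}=y_k+1/k$ satisfies the summable function-value criterion yet diverges), so the reduction to generic PPM theory cannot close the argument. This is precisely why Rockafellar's criterion (A) states the subproblem tolerance as $\epsilon_k^2/(2\rho)$ with $\sum_k\epsilon_k<\infty$, i.e., summability of the \emph{square roots} of the function-value errors. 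As written, your plan proves the theorem only under that stronger tolerance; to obtain it with the tolerance literally as in \cref{eq:subproblem-error} you would need an argument exploiting ALM-specific structure beyond the generic PPM reduction --- or, as is arguably the situation with the paper's own citation, the hypothesis should be read in Rockafellar's normalization.
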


\begin{algorithm}[t]
\caption{Inexact Aug. Lagrangian Method (iALM)}\label{alg:alm}
\begin{algorithmic}[1]
\Require $y_1 \in \RR^m, \rho > 0, \{\epsilon_k\} , k_{\max} \in \mathbb{N}$
\For{$k=1, 2, \ldots,k_{\max} $}
    \State Solve \cref{eq:ALM-inexact-min} for an inexact primal $x_{k+1}$, satisfying~\cref{eq:subproblem-error}.
    \State Update the dual variable $y_{k+1}$ via \cref{eq:ALM-exact-update}. 
\EndFor
\end{algorithmic}
\end{algorithm}

The Slater's condition ensures a dual optimal solution exists. We note that the convergence of the primal feasibility $\|\Amap x_{k+1} - b\|$ and the cost value gap $\innerproduct{c}{x_{k+1}} - \pstar $ is a direct consequence of that of $y_k$ thanks to \Cref{eq:ALM-residual}. However, the convergence of the primal sequence $\{x_k\}$ may not be guaranteed in \Cref{thm:ALM-convergence}. This~theorem does not provide the convergence rate either. Under some mild assumptions on the dual function of \Cref{eq:primal} (such as quadratic growth), the iALM in \Cref{alg:alm} is known to enjoy linear convergence in terms of dual iterates \cite{rockafellar1976augmented,luque1984asymptotic}, Karush–Kuhn–Tucker residuals \cite{cui2019r}, and primal iterates \cite{liao2024inexact}. Thanks to its stable and fast convergence performances, iALM has been widely used to develop efficient solvers for constrained convex optimization; see, e.g.,  \cite{birgin2014practical,yang2015sdpnal+,yurtsever2021scalable}. 

\subsection{Proximal point method and its bundle version} 
\label{subsection:bundle}

It is known that running the ALM for the primal \cref{eq:primal} is equivalent to executing a proximal point method (PPM) for its dual \cite{rockafellar1976augmented}. One key step in~\Cref{thm:ALM-convergence} is to establish the convergence of the dual iterates $y_k$, which is guaranteed by the analysis of PPM \cite{rockafellar1976monotone}. We~here~review the dual of \cref{eq:primal}, PPM and its bundle version. 

The Lagrangian of \eqref{eq:primal} is  $\calL(x,y) =\innerproduct{c}{x} + \innerproduct{y}{b-\Amap x}$. The dual function $\theta$ is thus given by
\begin{equation} \label{eq:dual-function}
     \theta(y) = \min_{x\in \Omega} \calL(x,y)  = \innerproduct{b}{y} + \min_{x \in \Omega } \innerproduct{c-\Ajmap y}{x}, 
\end{equation}
where $\Ajmap$ denotes the adjoint map of $\Amap$, i.e., $\langle \Amap x, y \rangle = \langle x, \Ajmap y  \rangle, \forall x \in \mathbb{R}^n, y \in \mathbb{R}^m$. The Lagrange dual of \eqref{eq:primal} is  
\begin{equation}
    \label{eq:dual}
    \begin{aligned}
       \theta^\star := \max_{y \in \mathbb{R}^m}  \; \theta(y). 
    \end{aligned}
    \tag{D}
\end{equation}
If \eqref{eq:ALM-exact-min} is solved exactly, then $y_{k+1}$ in \eqref{eq:ALM-exact-update} is the same as a proximal step on the dual function $g$ centered at $y_k$ \cite[Proposition 6]{rockafellar1976augmented}. 

We next review the PPM framework. Following the conventions of optimization, we consider a minimization problem 
\begin{equation} \label{eq:PPM-function}
f^\star := \min_{y\in \RR^n} f(y)
\end{equation}
where $f:\RR^n \to \RR$ is a convex (potentially nonsmooth) function. Problem \cref{eq:dual} is in the form of \cref{eq:PPM-function} by viewing $f=-g$. 
We define the \textit{proximal mapping} as
\begin{equation} \label{eq:PPM-subproblem}
    \text{prox}_{\alpha f}(y_k):=\argmin_{y \in \mathbb{R}^n}\; f(y) + \frac{1}{2\alpha} \left\|y - y_k\right\|^2,
\end{equation}
where $\alpha > 0$ is a given constant.  
Starting with any initial point $y_1$, the PPM generates a sequence of points as follows
\begin{equation}
    \label{eq:PPM}
     y_{k+1} =   \mathrm{prox}_{\alpha f}(y_k), \quad k = 1, 2, \ldots.
\end{equation}
Thanks to~the~quadratic term, the strong convexity of the cost in \cref{eq:PPM-subproblem} ensures a unique solution, and thus the iterates \cref{eq:PPM} are well-defined. 
The convergence of the (exact) PPM \cref{eq:PPM} is very well-studied \cite{rockafellar1976monotone,luque1984asymptotic,leventhal2009metric}. However, the proximal mapping \cref{eq:PPM-subproblem} is often computationally hard to evaluate (unless for special functions; see \cite[Chapter 6]{beck2017first} for a list of such cases). 

A classical and widely studied approach to approximate and simplify the original function $f$ is to use a \textit{bundle} of its (sub)gradients; see \cite{lemarechal1994condensed} for an overview. This leads to a family of proximal bundle methods (PBMs). 
Instead of \cref{eq:PPM}, the PBM updates a candidate point as $z_{k+1} = \mathrm{prox}_{\alpha f_k}(y_k)$, where the original $f$ is replaced by an approximation $f_k$ (conditions are given below). To test the quality of $z_{k+1}$, the PBM employs a criterion
\begin{equation}
    \label{eq:testing-bundle}
    f(y_{k}) - f(z_{k+1}) \geq \beta(f(y_{k}) - f_{k}(z_{k+1})), 
\end{equation}
where $\beta \in (0,1)$ is a predefined constant. If \eqref{eq:testing-bundle} holds, then we set $ y_{k+1} = z_{k+1}$ (this is called a \emph{descent step}). Otherwise, we set $y_{k+1} = y_k$ (known as a \textit{null} step). Regardless of a descent or null step, the PBM improves the next approximation $f_{k+1}$ using the new information $z_{k+1}$. 

\begin{assumption}
    \label{assump:bm} 
    The function $f_{k+1}$ satisfies three conditions: 
    
    \vspace{-2mm}
    \begin{enumerate}[leftmargin=*]
    \setlength{\itemsep}{0pt}
        \item \textbf{Lower approximation:} $f_{k+1}(y) \leq f(y), \forall y \in \mathbb{R}^m.$ \vspace{-1pt}
        \item \textbf{Subgradient:} There exists $ v_{k+1} \in \partial f(z_{k+1})$ such that 
        $$f_{k+1}(y) \geq f(z_{k+1}) + \innerproduct{v_{k+1}}{y-z_{k+1}}, \forall y \in \RR^m.$$ 
        
        \vspace{-8pt}
        
        \item \textbf{Aggregation:} For a null step, we require
        $$f_{k+1}(y) \geq f_k(z_{k+1}) + \innerproduct{s_{k+1}}{y-z_{k+1}},\forall y \in \RR^m,$$
        where $s_{k+1} = \frac{1}{\alpha}(y_k - z_{k+1}) \in \partial f_{k}(z_{k+1})$.
    \end{enumerate}
\end{assumption}
\vspace{-6pt}

\begin{algorithm}[t]
\caption{Proximal Bundle method (PBM)}\label{alg:bundle}
\begin{algorithmic}[1]
\Require $y_0 \in \RR^m, \alpha > 0, \beta \in (0,1),  k_{\max}\in \mathbb{N}$
\For{$k=0,1,2, \ldots,k_{\max}$}
    \State Compute the candidate point $z_{k+1} = \mathrm{prox}_{\alpha f_k}(y_k)$.
    \If{\cref{eq:testing-bundle} is satisfied}
        \State Set $y_{k+1} = z_{k+1}$, \hfill \Comment{\textit{descent step}}
    \Else
        \State Set $y_{k+1} = y_k$.  \hfill \Comment{\textit{null step}}
    \EndIf
    \State Update the model $f_{k+1}$ following \cref{assump:bm}.
\EndFor
\end{algorithmic}
\end{algorithm}

Requirement (1) means that $f_{k+1}$ is a global lower approximation of $f$, and requirement (2) means that $f_{k+1}$ should include the subgradient lowerbound of $f$ at $z_{k+1}$. The last requirement indicates that $f_{k+1}$ should aggregate the subgradient of $f_{k}$ at $z_{k+1}$. We list the PBM procedure in \Cref{alg:bundle}. Its convergence results are summarized below.

\begin{theorem}[{\cite[Theorem 2.1]{diaz2023optimal}}]
\label{thm:convergence-bundle}
    Consider \cref{eq:PPM-function} where $f$ is an $M$-Lipschitz convex function. Suppose its solution set $ S\!=\! \argmin_{y } f(y)$ is non-empty. Fix $\alpha > 0$ and $\beta \in (0,1)$ in \Cref{alg:bundle}. Given an $\epsilon > 0$, it holds that
\vspace{-12pt}
\begin{enumerate}[leftmargin=*]
\setlength{\itemsep}{0pt}
    \item A descent step will happen after at most $\bigO(\epsilon^{-2})$ number of consecutive null steps;
     \item The total number of steps to find an iterate $y_k$ satisfying $f(y_k) - f^\star \leq \epsilon$ is $\bigO\left(M^2\epsilon^{-3} \right)$. Moreover, if $\alpha = 1/\epsilon$, then the total number of steps reduces to $\bigO\left(M^2\epsilon^{-2} \right)$.
\end{enumerate}  
\end{theorem}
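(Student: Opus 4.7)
The plan is to build the argument around the potential function $\phi_k(y) := f_k(y) + \frac{1}{2\alpha}\|y - y_k\|^2$, so that $z_{k+1} = \argmin_y \phi_k(y)$ and $\phi_k$ is $(1/\alpha)$-strongly convex. Strong convexity immediately gives, for any $y$, the inequality $\phi_k(y) \geq \phi_k(z_{k+1}) + \frac{1}{2\alpha}\|y - z_{k+1}\|^2$. The aggregation condition in \Cref{assump:bm}(3), together with the first-order optimality $s_{k+1} = (y_k - z_{k+1})/\alpha \in \partial f_k(z_{k+1})$, implies that for every $y$, $f_{k+1}(y) + \frac{1}{2\alpha}\|y - y_k\|^2 \geq \phi_k(z_{k+1}) + \frac{1}{2\alpha}\|y - z_{k+1}\|^2$, i.e., $\phi_{k+1}(y) \geq \phi_k(z_{k+1}) + \frac{1}{2\alpha}\|y - z_{k+1}\|^2$. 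Evaluating at $y = z_{k+2}$ shows that during a sequence of null steps (where $y_k$ is frozen), the scalar sequence $\phi_k(z_{k+1})$ is monotonically nondecreasing, with per-step increment at least $\frac{1}{2\alpha}\|z_{k+2} - z_{k+1}\|^2$, and is capped above by $\phi_k(y_k) = f_k(y_k) \leq f(y_k)$.

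For Part 1, the goal is to convert this telescoping progress into a bound on $\Delta_k := f(y_k) - f_k(z_{k+1})$, the quantity that controls whether the descent test \cref{eq:testing-bundle} is failed. Using the subgradient condition \Cref{assump:bm}(2) and $M$-Lipschitzness (hence $\|v_{k+1}\| \leq M$), one has $f_{k+1}(z_{k+2}) \geq f(z_{k+1}) - M\|z_{k+2} - z_{k+1}\|$. Combined with the failure of the descent test $f(y_k) - f(z_{k+1}) < \beta\Delta_k$, this yields a recursion of the form $\Delta_{k+1} \leq (1-\beta)\Delta_k + M\|z_{k+2} - z_{k+1}\|$, while the potential identity gives $\|z_{k+2} - z_{k+1}\|^2 \leq 2\alpha(\Delta_k - \Delta_{k+1}) + o(\cdot)$. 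A standard quadratic-progress argument (summing or iterating the inequality) then shows $\Delta_k$ shrinks to $O(\epsilon)$ in at most $\mathcal{O}(\alpha M^2 / \epsilon^2)$ consecutive null steps.

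For Part 2, I would separately bound the number of descent steps. At every descent step, $f(y_{k+1}) \leq f(y_k) - \beta \Delta_k$. Coupling this with an envelope inequality $\phi_k(z_{k+1}) \geq \theta^\star + \frac{1}{2\alpha}\|z_{k+1} - y^\star\|^2 - \frac{1}{2\alpha}\|y_k - y^\star\|^2$ (derived from the lower-bound condition $f_k \leq f$ and convexity) yields the classical proximal-style telescoping $\|y_{k+1}-y^\star\|^2 \leq \|y_k-y^\star\|^2 - 2\alpha\beta\big(f(y_{k+1})-f^\star\big)$ at descent steps, so the number of descent steps needed to reach $f(y_k)-f^\star \leq \epsilon$ is $\mathcal{O}\!\left(\frac{\|y_1-y^\star\|^2}{\alpha\beta\epsilon}\right)$. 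Multiplying the two bounds gives a total of $\mathcal{O}(M^2/\epsilon^3)$ iterations for a generic choice of $\alpha$; setting $\alpha = 1/\epsilon$ balances the two contributions and produces $\mathcal{O}(M^2/\epsilon^2)$.

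The hard part will be Part 1: carefully combining the aggregation and subgradient conditions to obtain genuinely quadratic (rather than only linear) per-step progress on $\Delta_k$. In particular, one has to handle the cross terms carefully so that the $M\|z_{k+2}-z_{k+1}\|$ piece coming from the Lipschitz bound is absorbed by the $\|z_{k+2}-z_{k+1}\|^2/(2\alpha)$ gain extracted from strong convexity, which is where the $M^2$ and the $\alpha$ enter the null-step count. Once that quadratic recursion is in place, Part 2 is a routine consequence of the prox-like descent inequality plus the null-step count from Part 1.
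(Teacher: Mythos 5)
You should first know that the paper never proves this statement: \Cref{thm:convergence-bundle} is imported verbatim from \cite[Theorem 2.1]{diaz2023optimal} and is used downstream as a black box (it is what powers \Cref{lemma:null-steps} and the rates in \Cref{theorem:convergence-bundle-dual,thm:average-iterate}). So there is no in-paper proof to compare against; your attempt has to be judged against the cited source. Your overall plan is in fact the same as the standard Kiwiel-style analysis that Díaz--Grimmer refine: track the model prox-value $\phi_k^\star = f_k(z_{k+1}) + \frac{1}{2\alpha}\|z_{k+1}-y_k\|^2$, use the aggregation condition of \Cref{assump:bm} to show it is nondecreasing across null steps with increments at least $\frac{1}{2\alpha}\|z_{k+2}-z_{k+1}\|^2$, use the new subgradient cut plus $M$-Lipschitzness to force those increments to be large while the descent test keeps failing, and telescope an inexact-proximal inequality over descent steps. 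Structurally this is the right proof skeleton.

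There are, however, two genuine gaps. First, the pivotal quadratic-progress step in Part 1 is asserted rather than derived, and the inequality you write, $\|z_{k+2}-z_{k+1}\|^2 \le 2\alpha(\Delta_k-\Delta_{k+1}) + o(\cdot)$, does not hold as stated: aggregation controls increments of the prox values $\phi_k^\star$, not of the model gaps $\Delta_k = f(y_k)-f_k(z_{k+1})$, and the two differ by the quadratic terms $\frac{1}{2\alpha}\|z_{k+1}-y_k\|^2$ whose variation is not controlled --- this is exactly the cross-term issue you defer. The known fix is a min--max computation over the two affine minorants of $f_{k+1}$ (the aggregate cut and the new cut): since the aggregate cut plus the prox term equals $\phi_k^\star + \frac{1}{2\alpha}\|y-z_{k+1}\|^2$, and $\|v_{k+1}\|\le M$, one gets $\phi_{k+1}^\star - \phi_k^\star \ge c\,e_k^2/(\alpha M^2)$ with $e_k = f(z_{k+1})-\phi_k^\star$, and the failed test together with $\Delta_k \le 2\bigl(f(y_k)-\phi_k^\star\bigr)$ (a strong-convexity fact) converts this into progress measured by $\Delta_k$. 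Second, your Part 2 accounting cannot produce the claimed improvement: your two bounds, $\mathcal{O}(\alpha M^2/\epsilon^2)$ null steps per descent step and $\mathcal{O}(D^2/(\alpha\beta\epsilon))$ descent steps, multiply to an $\alpha$-independent $\mathcal{O}(M^2\epsilon^{-3})$, so no choice of $\alpha$ ``balances'' anything. The $\epsilon^{-2}$ rate requires the sharper, $\alpha$-sensitive cap on the total potential rise within one null sequence: immediately after a descent step the new cut gives $f(y_{k+1})-\phi_{k+1}^\star \le \alpha M^2/2$, hence $\mathcal{O}(\alpha^2M^4/\epsilon^2)$ null steps, and only then does the product $\mathcal{O}(\alpha D^2M^4/\epsilon^3)$ depend on $\alpha$ and balance against $\mathcal{O}(D^2/(\alpha\epsilon))$; note that under the paper's convention $\mathrm{prox}_{\alpha f}$ in \cref{eq:PPM-subproblem} (quadratic term $\frac{1}{2\alpha}\|\cdot\|^2$) the balancing choice is $\alpha \sim \epsilon/M^2$, and the ``$\alpha = 1/\epsilon$'' in the statement refers to the reciprocal (penalty-coefficient) convention. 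A minor point: with the test \cref{eq:testing-bundle} as written, a failed test yields $\Delta_{k+1} \le \beta\Delta_k + M\|z_{k+2}-z_{k+1}\|$, not $(1-\beta)\Delta_k$.
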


\vspace{-2pt}

If $f$ satisfies further regularity conditions such as \textit{quadratic growth}, the convergence rate of \Cref{alg:bundle} can be improved; see \cite[Table 1]{diaz2023optimal}. Problem \cref{eq:PPM-function} is unconstrained, and extensions to the constrained case can be found in \cite[Lemma 2.2]{liao2023overview}.

\subsection{Our motivation of \texttt{BALA}}
\label{subsection:motivation}
While the iALM in \Cref{alg:alm} provides stable and fast convergences in practice, it does not inform how to solve the subproblem \cref{eq:ALM-inexact-min} to satisfy the condition \cref{eq:subproblem-error}. In the literature, different subroutines are used to achieve the inexactness, e.g., the semi-smooth Newton method in \cite{yang2015sdpnal+,zhao2010newton} and Nesterov's optimal first-order method in \cite{lan2016iteration}. Almost all existing iALM-based algorithms have nested loops, including a customized algorithm as the inner loop to solve~\cref{eq:ALM-inexact-min}.   
On the other hand, the PBM in \Cref{alg:bundle} can be viewed as one single loop algorithm, in which a unique testing criterion \cref{eq:testing-bundle} is used to determine whether or not to move the iterate.   

Building on the primal and dual insights between ALM and PPM in \cite{rockafellar1976augmented}, we develop a new \textit{bundle-based} augmented Lagrangian framework. This framework leverages \textit{a bundle of past iterates} to approximate the convex set $\Omega_k \subset \Omega$, which enables an exact and efficient solution to an approximated subproblem. This approach aligns in spirit with the PBM that approximates the original complicated function $f$ using a bundle of its past \textit{subgradients}. We will present this framework and detail the $\Omega_k \subset \Omega$ construction in \cref{section:BALA}. Our proposed \underline{B}undle-based \underline{A}ugmented \underline{L}agrangian \underline{A}lgorithm (\texttt{BALA}) features a single-loop structure similar to that of \Cref{alg:bundle}. Indeed, our proposed \texttt{BALA} shares a deep primal and dual relationship with the PBM in \Cref{alg:bundle}, analogous to the connection between the exact ALM and PPM revealed in \cite{rockafellar1976augmented}. We establish non-asymptotic convergences of \texttt{BALA}, which show superior performances compared to the conditional-gradient augmented Lagrangian framework \cite{yurtsever2019conditional,yurtsever2021scalable,garber2023faster} in both theory and numerical experiments.

\section{A Bundle-based Augmented Lagrangian Framework and \alg{}} \label{section:BALA}

This section presents a new bundle-based augmented Lagrangian framework and a specific algorithm \alg. Similar to (inexact) ALM, our approach is essentially based on a primal and dual framework, utilizing both primal and dual information to update iterates. We recall the primal problem is shown in \cref{eq:primal}, and for notational convenience, we rewrite its dual problem \cref{eq:dual} in the form of \cref{eq:PPM-function} as 
\begin{equation}
    \label{eq:dual-minimizatio}
    \begin{aligned}
        g^\star:= \min_{y \in \mathbb{R}^m}  \; g(y),
    \end{aligned} \vspace{-1mm}
\end{equation}
where we denote $g(y) := -\theta(y), \forall y \in \mathbb{R}^m$. Note that the optimal value is related as $g^\star = - \theta^\star$. 

\subsection{A conceptual idea of inner approximations}
The primal update \cref{eq:ALM-exact-min} is the computationally expensive step in ALM, often due to the potentially complicated convex set $\Omega$. For example, $\Omega$ often involves the set of positive semidefinite (PSD) matrices in many applications \cite{goemans1995improved,lanckriet2004learning}. Even testing the membership of $x \in \Omega$ typically requires a complexity of order $\mathcal{O}(\tilde{n}^3)$, where $\tilde{n}$ denotes the PSD matrix dimension. 

Unlike the classical choice of getting an inexact solution \cref{eq:subproblem-error}, we aim to approximate the set $\Omega$ by another simpler convex set $\Omega_k \subset \Omega$, such that the following approximated subproblem admits an efficient and exact solution
\begin{equation}
\label{eq:ALM-exact-set-min-main-text}
    w_{k+1} \in \argmin_{x \in \Omega_{k}}\; \calL_{\rho}(x,y_k).
\end{equation}
For example, one may use the set of diagonally dominant matrices to approximate PSD matrices, leading to a set of linear constraints. This idea has indeed been used widely in other contexts (but not in the ALM framework); see e.g., \cite{ahmadi2017sum,zheng2022block,liao2022iterative}. When applying this approximation idea in the ALM, if we directly replace the primal update \cref{eq:ALM-exact-min} with \cref{eq:ALM-exact-set-min-main-text}, then the algorithm convergence would heavily depend on the approximation quality $\Omega_k \subset \Omega$. An $\epsilon$-optimal solution would mean to have a very accurate inner approximation, which poses two challenges 1) constructing an accurate approximation $\Omega_k \subset \Omega$ is itself a non-trivial problem \cite{song2023approximations} and 2) solving \cref{eq:ALM-exact-set-min-main-text} exactly with this $\Omega_k$ might not be computationally simple \cite{zheng2022block}. 

To avoid the challenges, our key idea is to exploit the past iterates to iteratively update the approximation $\Omega_k$, which can guide our search process and reach a global minimizer. This is analogous to the PBM that iteratively approximates the original (potentially complicated) convex function $f$. 

\subsection{A novel bundled-based inner approximation}
\label{subsection:bundle-inner-approximation}

Our starting point is to relax the requirement of obtaining an $\epsilon$-optimal solution to \cref{eq:ALM-exact-min} from \cref{eq:ALM-exact-set-min-main-text}. Instead, we will use $w_{k+1}$ to update the next inner approximation $\Omega_{k+1} \subset \Omega$. 
It is important to note -- and we will elaborate on this later -- that these approximations are typically not nested, i.e.,  $\Omega_k \not\subseteq \Omega_{k+1}$. At each iteration, the inner approximation $\Omega_{k}$ can be chosen to be very simple, enabling an exact and computationally efficient solution to \cref{eq:ALM-exact-set-min-main-text}. Indeed, each $\Omega_{k}$ 
could be as simple as a \textit{line segment}, such that \cref{eq:ALM-exact-set-min-main-text} admits an analytical solution (no other subroutines are required). 

To achieve this, we will use a dual function to guide the update of $\Omega_{k+1}$ and employ a testing step similar to the PBM. Each iteration of our bundled-based augmented Lagrangian framework has three steps: a) computing a pair of primal and dual candidates from \cref{eq:ALM-exact-set-min-main-text}; b) testing the descent progress; c) updating the approximation $\Omega_{k+1}$. 

\vspace{3pt}

\noindent \textbf{Step a) Computing a pair of candidates.} 
We compute an exact optimal solution $w_{k+1}$ to \cref{eq:ALM-exact-set-min-main-text} as our primal candidate. Any convex inner approximation $\Omega_1 \subset \Omega$ and dual initial point $y_1 \in \mathbb{R}^m$ are acceptable for the first iteration. We perform a dual ascent on $\calL_{\rho}(w_{k+1},\cdot)$ to get a dual candidate:  
\begin{equation} \label{eq:dual-update-ALM}
    z_{k+1} =  y_k + \rho ( b- \Amap w_{k+1}).
\end{equation}

\vspace{3pt}

\noindent
\textbf{Step b) Testing the descent progress.} We then test if the candidate $(w_{k+1},z_{k+1})$ makes sufficient descent progress. In particular, we examine the cost drop of the dual function, i.e.,  
$g(y_{k}) - g(z_{k+1})$. Ideally, we want the dual cost to decrease sufficiently. 
Since we have approximated the primal augmented Lagrangian in \cref{eq:ALM-exact-set-min-main-text}, it is useful to define an induced \textit{approximated} dual function \vspace{-1mm}
\begin{equation} 
\label{eq:approximated-dual}
g_k(y) = - \min_{x\in\Omega_k} \calL(x,y).  \vspace{-1mm}
\end{equation}
Instead of fixing a constant threshold, we use a dynamic test
\begin{equation}
\label{eq:test}
    g(y_{k}) - g(z_{k+1}) \geq \beta (g(y_{k}) - g_k(z_{k+1})),
\end{equation}
where $\beta\! \in\! (0,1)$. The term $g(y_{k}) - g_k(z_{k+1})$  can be viewed as an approximated dual cost drop. By definition, we have \begin{equation}
    \label{eq:g-gk}
    g(z_{k+1}) \!=\! -\min_{x\in \Omega} \calL(x,z_{k+1})\geq 
    \!-\!\min_{x\in \Omega_{k}} \calL(x,z_{k+1}).
\end{equation} 
Then, if \cref{eq:test} is satisfied, the true cost drop $g(y_k) - g(z_{k+1})$ would be at least $\beta$ close to the approximated dual cost drop. In this case, we have found a good candidate $(w_{k+1},z_{k+1})$, and we update the iterates $x_{k+1} = w_{k+1}$ and $y_{k+1} = z_{k+1}$ (this is called as a \emph{descent step}). Otherwise, we set $x_{k+1} = w_{k}$ and $y_{k+1} = z_{k}$ (this is called as a \emph{null step}).

\vspace{3pt}

\noindent
\textbf{Step c) Updating the inner approximation.} Regardless of a descent or null step, we update $\Omega_{k+1}$ as follows.  

\vspace{-1mm}

\begin{assumption}
    \label{assump:AALM}  
    The approximation set $\Omega_{k+1}$ satisfies \vspace{-3mm}
    
    \begin{enumerate}[leftmargin=*]
    \setlength{\itemsep}{0pt}
    \item \textbf{Proper set:} $\Omega_{k+1}$ is convex and closed;
        \item \textbf{Inner approximation:} we have $\Omega_{k+1} \subseteq \Omega $; \vspace{-0.5mm}
        \item \textbf{Dual information:} we require $v_{k+1} \in \Omega_{k+1}$, where $v_{k+1} \in \Omega$ satisfies $g(z_{k+1}) = - \calL(v_{k+1},z_{k+1})$; 
        \item \textbf{Primal information:} if the step $k$ is a null step, then we require $ w_{k+1} \in \Omega_{k+1}$.
    \end{enumerate}
\end{assumption}

\vspace{-2mm}

Essentially, we use a \textit{bundle} of past points, including $v_{k+1}$ (from its dual) and $w_{k+1}$ (from~its~primal), to construct the inner approximation. These rules in updating $\Omega_{k+1}$ are crucial for the convergence guarantees, and they share a similar structure with \Cref{assump:bm}. As alluded to at the beginning of this section, the set $\Omega_{k+1}$ can simply contain only a line segment connecting $v_{k+1}$ and $w_{k+1}$, i.e.,
\begin{align*}
    \Omega_{k+1} =  \{\alpha v_{k+1} +  (1-\alpha) w_{k+1} : 0  \leq \alpha \leq 1 \}.
\end{align*}
In this case, the subproblem \cref{eq:ALM-exact-set-min-main-text} can be extremely simple to solve, as it becomes a convex quadratic program (QP) with a scalar variable ({see \Cref{appendix:computations-of-BALA} for computational details}).

\subsection{Our algorithm: \alg}

Following the bundle-based approximation~above,~we~list our \underline{B}undle-based \underline{A}ugumented \underline{L}agrangian \underline{A}lgorithm (\alg) in \Cref{alg:bundle-dual}. 
Our algorithm \alg{} is motivated by and closely related~to~the classical iALM (\Cref{alg:alm}) and PBM (\Cref{alg:bundle}). 
Meanwhile, \alg{} has some unique features. Compared with iALM, \alg{} can be viewed as a single-loop algorithm (nested two loops are unnecessary), and {all the iterations in \alg{} can be executed efficiently}; we provide further discussions on the computations of each step in \alg{} in \Cref{appendix:computations-of-BALA}. 

\begin{algorithm}[t]
\caption{\underline{B}undle-based \underline{A}ug. \underline{L}agrangian \underline{A}lg. (\alg)
}
\label{alg:bundle-dual}
\begin{algorithmic}[1]
\Require $\Omega_1 \subseteq \Omega, x_1 \in \RR^n ,y_1 \in \RR^m, \rho  > 0, k_{\max} \in \mathbb{N}.$
\For{$k=1,2, \ldots, k_{\max}$}
    \State Acquire a primal candidate via solving \cref{eq:ALM-exact-set-min-main-text} exactly. 
    \State Compute a dual candidate via \cref{eq:dual-update-ALM}. 
    \If{\eqref{eq:test} is satisfied}
     \State Set $x_{k+1} = w_{k+1}, y_{k+1} = z_{k+1}$.  \hfill \Comment{{descent step}}
    \Else
       \State Set $x_{k+1} = w_{k}, y_{k+1} = y_k$. \hfill  \Comment{\textit{null step}}
    \EndIf
    \State Update the set $\Omega_{k+1}$ satisfying \cref{assump:AALM}.
\EndFor
\end{algorithmic}
\end{algorithm}
Compared with PBM, our main focus is on the inner approximation of the convex set $\Omega$ in the ALM framework, while PBM focuses on the lower approximation of the convex function $f$ in the PPM framework. 
We present sublinear and linear convergence guarantees in \cref{section:convergence}. We give a more in-depth discussion of the primal and dual principles in \alg{} with respect to iALM and PBM in \Cref{section:AALA-PBM}. Our algorithm \alg{} is also closely related to the standard subgradient method and its variants. These discussions are provided in \Cref{appendix:subgradient-methods}. 

\section{Convergence Guarantees of \alg} 
\label{section:convergence}
In this section, we establish the sublinear and linear convergence guarantees of \alg. Throughout, we assume Slater's condition for both \eqref{eq:primal} and \eqref{eq:dual}. Thus, strong duality holds, and \eqref{eq:primal} and \eqref{eq:dual} are solvable. We also assume $\Omega$ is compact so that its dual function is Lipschitz continuous. 

\subsection{Progress of descent steps}

\alg{} is a primal and dual algorithm and our analysis also heavily depends on primal and dual connections.  
Here, we present a few lemmas connecting primal and dual progress for descent steps. Our first result is an implication of \cref{eq:test}.

\begin{lemma}[Implication of \cref{eq:test}]
    \label{prop:connection}
    Fix any $\rho > 0$. If step $k$ of \alg{} satisfies the criterion \cref{eq:test}, then we have
    \begin{equation*}
        \calL_{\rho}(x_{k+1},y_k) - \min_{x \in \Omega} \calL_{\rho}(x,y_k)\leq (g(y_k) - g(y_{k+1}))/\beta.
    \end{equation*}
\end{lemma}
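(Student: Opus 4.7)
The plan is to exploit the primal-dual connection between the approximated subproblem \cref{eq:ALM-exact-set-min-main-text} and a proximal step on the approximated dual $g_k$ at $y_k$, in direct analogy with the classical link between ALM and PPM. Using the Fenchel identity $\frac{\rho}{2}\|b-\Amap x\|^2 = \max_{y \in \RR^m}\{\langle y-y_k,\, b-\Amap x\rangle - \frac{1}{2\rho}\|y-y_k\|^2\}$ inside $\calL_\rho$ and exchanging $\min_{x\in\Omega_k}$ with $\max_y$ (valid since $\Omega_k$ is compact convex and the coupling is bilinear), one obtains
\begin{equation*}
\min_{x \in \Omega_k} \calL_\rho(x,y_k) = -\min_{y \in \RR^m}\left\{g_k(y) + \frac{1}{2\rho}\|y-y_k\|^2\right\}.
\end{equation*}
The first-order condition for the primal minimizer $w_{k+1}$ reads $\innerproduct{c - \Ajmap z_{k+1}}{x - w_{k+1}} \geq 0$ for all $x \in \Omega_k$, where $z_{k+1} = y_k + \rho(b - \Amap w_{k+1})$; this exhibits $w_{k+1}$ as a minimizer of $\calL(\cdot, z_{k+1})$ over $\Omega_k$ and therefore $z_{k+1}$ as the dual argmin of the proximal subproblem, yielding the exact identity
\begin{equation*}
\calL_\rho(w_{k+1},y_k) = -g_k(z_{k+1}) - \frac{1}{2\rho}\|z_{k+1}-y_k\|^2.
\end{equation*}

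Applying the same Fenchel reformulation to $\Omega$ (where only a lower bound is needed) and evaluating at the specific point $y = z_{k+1}$ yields
\begin{equation*}
\min_{x\in\Omega}\calL_\rho(x,y_k) \geq -g(z_{k+1}) - \frac{1}{2\rho}\|z_{k+1}-y_k\|^2.
\end{equation*}
Subtracting the two displays causes the proximal quadratic terms to cancel exactly, producing the clean inequality
\begin{equation*}
\calL_\rho(w_{k+1},y_k) - \min_{x\in\Omega}\calL_\rho(x,y_k) \leq g(z_{k+1}) - g_k(z_{k+1}).
\end{equation*}

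Finally, since the descent test \cref{eq:test} holds at step $k$, we have $x_{k+1} = w_{k+1}$ and $y_{k+1} = z_{k+1}$. Rearranging \cref{eq:test} yields $g(y_k)-g_k(z_{k+1}) \leq \beta^{-1}(g(y_k)-g(y_{k+1}))$, and the split
\begin{equation*}
g(z_{k+1})-g_k(z_{k+1}) = [g(y_k)-g_k(z_{k+1})] - [g(y_k)-g(y_{k+1})]
\end{equation*}
then gives $g(z_{k+1})-g_k(z_{k+1}) \leq \frac{1-\beta}{\beta}(g(y_k)-g(y_{k+1})) \leq \beta^{-1}(g(y_k)-g(y_{k+1}))$, which combined with the previous display is the claim. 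The main delicate point is the duality step above: one must justify the minimax exchange over $\Omega_k$ and verify from the first-order condition for $w_{k+1}$ that $z_{k+1}$ is genuinely the proximal minimizer of $g_k$ (equivalently, $\Amap w_{k+1}-b \in \partial g_k(z_{k+1})$); once that identity is in place, the remainder of the argument is straightforward algebra.
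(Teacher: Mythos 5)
Your proposal is correct, and its core engine — the identity $\calL_\rho(w_{k+1},y_k) = -g_k(z_{k+1}) - \frac{1}{2\rho}\|z_{k+1}-y_k\|^2$ obtained from the minimax exchange — is exactly the paper's \cref{eq:prox-gkj} (established as \cref{eq:dual-bala-proximal}), which the paper's proof also invokes. Where you genuinely diverge is in how the exact minimum $\min_{x\in\Omega}\calL_\rho(x,y_k)$ is bounded from below. The paper uses the elementary pointwise bound $\calL_\rho(x,y_k)\geq \calL(x,y_k)$ to get $\min_{x\in\Omega}\calL_\rho(x,y_k)\geq -g(y_k)$, which yields the intermediate bound $g(y_k)-g_k(z_{k+1})$ on the gap and then concludes by dividing the test \cref{eq:test} by $\beta$ — no sign information is needed. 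You instead apply the dual proximal reformulation to $\Omega$ itself (weak duality suffices, as you note) and evaluate at $z_{k+1}$, so the quadratic terms cancel and you obtain the tighter intermediate bound $g(z_{k+1})-g_k(z_{k+1})$, i.e., the model error at the candidate point, and ultimately the sharper constant $\frac{1-\beta}{\beta}$ before relaxing to $\frac{1}{\beta}$. That relaxation is the one small loose end: $\frac{1-\beta}{\beta}t\leq\frac{1}{\beta}t$ requires $t = g(y_k)-g(y_{k+1})\geq 0$, which does hold at a descent step (since $g\geq g_k$ and $z_{k+1}$ is the prox minimizer of $g_k$, the right-hand side of \cref{eq:test} is nonnegative), but you should state this explicitly. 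What your route buys is a strictly stronger conclusion and a conceptually useful one: isolating the model error $g(z_{k+1})-g_k(z_{k+1})$ connects this lemma directly to the quadratic-closeness condition \cref{eq:quadratic-close} used in the linear-convergence analysis; what the paper's route buys is brevity and immunity to sign considerations.
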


The proof is a direct application of the inequality in \cref{eq:g-gk}; see details in \cref{apx:prop:connection}. \Cref{prop:connection} provides an interesting connection between \alg{} and iALM: for a descent step, $x_{k+1}$ is an $\epsilon_k$-solution to the subproblem $\min_{x \in \Omega} \calL_{\rho}(x,y_k)$ with $\epsilon_k = (g(y_k) - g(y_{k+1}))/\beta$. Further discussions between \alg{} and iALM are provided later in \cref{subsection:AALM-iALM}.

We can also quantify the step length, primal residual, and cost value gap at any descent step. 
\begin{lemma}[Step length and primal residual]
    \label{lemma:bundle-dual-consecutive}
       Fix any $\rho > 0$. If step $k$ of \alg~satisfies the criterion \cref{eq:test}, then we have 
     \begin{subequations}
    \begin{align}
          \|y_{k+1} - y_k\|^2/(2\rho)    &\leq (g(y_{k})-\gstar)/\beta, \label{eq:bound-dual} \\
         \rho \| \Amap x_{k+1} - b\|^2/2 & \leq  (g(y_{k}) - \gstar)/\beta.
        \label{eq:bound-affine}
    \end{align}
     \end{subequations}
\end{lemma}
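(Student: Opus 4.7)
The plan is to prove \eqref{eq:bound-dual} first and derive \eqref{eq:bound-affine} as an immediate corollary. For a descent step, the dual update gives $y_{k+1}=z_{k+1}=y_k+\rho(b-\Amap w_{k+1})=y_k+\rho(b-\Amap x_{k+1})$, so $\|y_{k+1}-y_k\|^2/(2\rho)=\rho\|\Amap x_{k+1}-b\|^2/2$, and the two left-hand sides coincide. It therefore suffices to bound $\|y_{k+1}-y_k\|^2/(2\rho)$.

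My key step is a saddle-point-type identity for the approximated subproblem \eqref{eq:ALM-exact-set-min-main-text}. A completion-of-squares using $z_{k+1}-y_k=\rho(b-\Amap w_{k+1})$ yields
\[
\calL_\rho(w_{k+1},y_k)\;=\;\calL(w_{k+1},z_{k+1})-\tfrac{1}{2\rho}\|z_{k+1}-y_k\|^2.
\]
I then argue that $w_{k+1}\in\argmin_{x\in\Omega_k}\calL(x,z_{k+1})$, so $\calL(w_{k+1},z_{k+1})=-g_k(z_{k+1})$ via \eqref{eq:approximated-dual}. This follows because the first-order optimality of $w_{k+1}$ for $\min_{x\in\Omega_k}\calL_\rho(x,y_k)$ reads $\langle c-\Ajmap z_{k+1},\,x-w_{k+1}\rangle\ge 0$ for all $x\in\Omega_k$ (using $\nabla_x\calL_\rho(w_{k+1},y_k)=c-\Ajmap z_{k+1}$), and linearity of $\calL(\cdot,z_{k+1})$ makes this condition sufficient for optimality on the convex set $\Omega_k$. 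Combining the two displays produces the working identity
\[
\tfrac{1}{2\rho}\|z_{k+1}-y_k\|^2\;=\;-g_k(z_{k+1})-\calL_\rho(w_{k+1},y_k).
\]

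To close the argument, I upper-bound the right-hand side. Since $\calL_\rho(\cdot,y_k)\ge\calL(\cdot,y_k)$ pointwise, $\calL_\rho(w_{k+1},y_k)=\min_{x\in\Omega_k}\calL_\rho(x,y_k)\ge\min_{x\in\Omega_k}\calL(x,y_k)=-g_k(y_k)$. Together with $g_k(y_k)\le g(y_k)$, which is a direct consequence of $\Omega_k\subseteq\Omega$ (\Cref{assump:AALM}, item 2), and $y_{k+1}=z_{k+1}$ for a descent step, this chains to $\tfrac{1}{2\rho}\|y_{k+1}-y_k\|^2\le g(y_k)-g_k(z_{k+1})$. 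Rearranging the test criterion \eqref{eq:test} gives $g(y_k)-g_k(z_{k+1})\le(g(y_k)-g(z_{k+1}))/\beta$, and since $g(z_{k+1})\ge\gstar$, this upper bound further reduces to $(g(y_k)-\gstar)/\beta$, proving \eqref{eq:bound-dual}; the bound \eqref{eq:bound-affine} then follows from the opening observation. The main obstacle I foresee is cleanly justifying the claim $w_{k+1}\in\argmin_{x\in\Omega_k}\calL(x,z_{k+1})$ without invoking external strong-duality theorems: the argument hinges on the coincidence of gradients $\nabla_x\calL_\rho(w_{k+1},y_k)=\nabla_x\calL(w_{k+1},z_{k+1})=c-\Ajmap z_{k+1}$, combined with the linearity of $\calL(\cdot,z_{k+1})$ in $x$, which is what turns the necessary first-order condition into a sufficient one on the convex set $\Omega_k$.
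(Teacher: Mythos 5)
Your proof is correct, and it reaches the paper's pivotal inequality, $\tfrac{1}{2\rho}\|z_{k+1}-y_k\|^2\le g_k(y_k)-g_k(z_{k+1})$, by a genuinely different mechanism. The paper obtains it in one line from the dual proximal interpretation of \alg{}: by \Cref{eq:dual-bala-proximal}, $z_{k+1}$ minimizes $g_k(y)+\tfrac{1}{2\rho}\|y-y_k\|^2$, so comparing objective values at $z_{k+1}$ and at $y_k$ gives the bound; the remaining chain ($g_k(y_k)\le g(y_k)$, then the test \cref{eq:test}, then $g(z_{k+1})\ge\gstar$, and finally $y_{k+1}=z_{k+1}$ with $z_{k+1}-y_k=\rho(b-\Amap x_{k+1})$ to convert between the two left-hand sides) is identical to yours. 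You instead stay entirely on the primal side: the completion-of-squares identity $\calL_\rho(w_{k+1},y_k)=\calL(w_{k+1},z_{k+1})-\tfrac{1}{2\rho}\|z_{k+1}-y_k\|^2$, together with the observation that $\nabla_x\calL_\rho(w_{k+1},y_k)=c-\Ajmap z_{k+1}$ and the linearity of $\calL(\cdot,z_{k+1})$ turn the variational inequality for \cref{eq:ALM-exact-set-min-main-text} into the exact statement $\calL(w_{k+1},z_{k+1})=-g_k(z_{k+1})$, after which $\calL_\rho(w_{k+1},y_k)\ge\min_{x\in\Omega_k}\calL(x,y_k)=-g_k(y_k)$ closes the bound; all of these steps check out. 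In effect you re-derive, at the single point $z_{k+1}$, the content of the identity \cref{eq:prox-gkj} that the paper invokes as a black box. Your route buys self-containedness: it needs only convexity and closedness of $\Omega_k$ plus first-order optimality, and avoids the minimax swap (Rockafellar's theorem, which requires compactness of $\Omega_k$) underlying the proof of \Cref{eq:dual-bala-proximal}. The paper's route buys brevity and reuse: once the primal-dual PBM correspondence is in place, this lemma is an immediate consequence of the standard prox-descent property, and the same correspondence is what powers \Cref{prop:AALM-PBM} and the sublinear rates in \Cref{theorem:convergence-bundle-dual}.
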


\begin{lemma}[Primal cost value]
    \label{lemma:cost-value}
     Fix any $\rho > 0$. If step $k$ of \alg~satisfies the criterion \cref{eq:test}, then we have 
    \begin{align*}
        -\|\ystar\|\|\Amap x_{k+1} -  b\| \leq \innerproduct{c}{x_{k+1}} - p^\star  
          \leq   2   (g(y_k) - \gstar)/\beta   + \rho^2 \|y_k\|  \|\Amap x_{k+1} - \!b\|,
    \end{align*}
        where $\ystar$ is any dual optimal solution in \eqref{eq:dual}.
\end{lemma}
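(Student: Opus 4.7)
The plan is to prove the two inequalities separately: the lower bound follows from strong duality, while the upper bound combines \Cref{prop:connection}, the dual-update identity, and \Cref{lemma:bundle-dual-consecutive}. Since step $k$ is a descent step, $x_{k+1}=w_{k+1}\in\Omega_k\subseteq\Omega$ and $y_{k+1}=y_k+\rho(b-\Amap x_{k+1})$; both facts will be used freely.

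\textbf{Lower bound.} Strong duality (from Slater's condition) gives $\pstar=\min_{x\in\Omega}\calL(x,\ystar)\leq\calL(x_{k+1},\ystar)=\innerproduct{c}{x_{k+1}}+\innerproduct{\ystar}{b-\Amap x_{k+1}}$, where admissibility $x_{k+1}\in\Omega$ is what makes the inequality valid. Rearranging and applying Cauchy--Schwarz to the inner product $\innerproduct{\ystar}{\Amap x_{k+1}-b}$ finishes the proof in one step.

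\textbf{Upper bound.} I would start from \Cref{prop:connection}, which states $\calL_\rho(x_{k+1},y_k)\leq\min_{x\in\Omega}\calL_\rho(x,y_k)+(g(y_k)-g(y_{k+1}))/\beta$. Evaluating the minimum at $\xstar$ and using $\Amap\xstar=b$ shows $\min_{x\in\Omega}\calL_\rho(x,y_k)\leq\calL_\rho(\xstar,y_k)=\pstar$, while $g(y_{k+1})\geq\gstar$ upgrades the right-hand side to $(g(y_k)-\gstar)/\beta$. Next, the dual update $y_{k+1}-y_k=\rho(b-\Amap x_{k+1})$ enables a short completed-squares calculation that collapses the linear and quadratic terms of the augmented Lagrangian into a single squared-norm difference, yielding the identity
\[
\calL_\rho(x_{k+1},y_k)=\innerproduct{c}{x_{k+1}}+(\|y_{k+1}\|^2-\|y_k\|^2)/(2\rho),
\]
so that $\innerproduct{c}{x_{k+1}}-\pstar\leq(g(y_k)-\gstar)/\beta+(\|y_k\|^2-\|y_{k+1}\|^2)/(2\rho)$. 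I would then bound the dual-norm difference via the reverse-triangle-type estimate $\|y_k\|^2-\|y_{k+1}\|^2\leq\|y_k-y_{k+1}\|\,(2\|y_k\|+\|y_k-y_{k+1}\|)$, substitute $\|y_{k+1}-y_k\|=\rho\|\Amap x_{k+1}-b\|$, and finally invoke \Cref{lemma:bundle-dual-consecutive} to absorb the leftover quadratic $(\rho/2)\|\Amap x_{k+1}-b\|^2$ into a second copy of $(g(y_k)-\gstar)/\beta$, producing the stated factor of $2$.

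\textbf{Main obstacle.} The principal care is bookkeeping: $(\|y_k\|^2-\|y_{k+1}\|^2)$ need not be nonnegative, so it must be bounded in absolute value; and the $\rho$-factors appearing in the translation between the primal residual $\Amap x_{k+1}-b$ and the dual step $y_{k+1}-y_k$ must be tracked carefully to land on the precise coefficient in front of $\|y_k\|\|\Amap x_{k+1}-b\|$. The remaining steps reduce to routine Cauchy--Schwarz and triangle inequality manipulations, and no further structural ingredient beyond \Cref{prop:connection} and \Cref{lemma:bundle-dual-consecutive} is required.
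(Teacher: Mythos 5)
Your proposal is correct and follows essentially the same route as the paper: the lower bound via the saddle-point inequality $\pstar=\calL(\xstar,\ystar)\leq\calL(x_{k+1},\ystar)$ plus Cauchy--Schwarz, and the upper bound by combining \Cref{prop:connection} with the completed-squares identity $\calL_\rho(x_{k+1},y_k)=\innerproduct{c}{x_{k+1}}+(\|y_{k+1}\|^2-\|y_k\|^2)/(2\rho)$ --- which is precisely \cref{eq:ALM-cost} of \Cref{thm:ALM-convergence}, cited by the paper where you re-derive it --- and then \Cref{lemma:bundle-dual-consecutive}. One remark on constants: your bookkeeping yields the cross term $\|y_k\|\|\Amap x_{k+1}-b\|$ with coefficient $1$, the paper's own proof yields coefficient $\rho$, and neither produces the $\rho^2$ appearing in the lemma statement; this is an inconsistency internal to the paper (for $\rho<1$ neither derivation gives the stated inequality literally), not a defect of your argument, whose bound is in fact the tightest of the three.
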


It is clear that \cref{eq:bound-dual} bounds the step length of dual iterates, \cref{eq:bound-affine} bounds the primal residual, and \Cref{lemma:cost-value} bounds the primal cost gap. The proof of \Cref{lemma:cost-value,lemma:bundle-dual-consecutive} is not difficult; we provide the details in \cref{apx:lemma:bundle-dual-consecutive,apx:lemma:cost-value}. 

\subsection{Sublinear convergences for convex problems}
\label{subsection:sublinear}

\Cref{prop:connection,lemma:bundle-dual-consecutive,lemma:cost-value} quantifies the progress of descent steps. One remaining point is to establish that~the number of null steps between two descent steps is upper bounded. This is indeed the case~by~realizing that \alg{} has a proximal bundle interpretation in the dual (see \cref{subsection:AALA-PBM}, where we show that \alg{} and PBM in \Cref{alg:bundle} share a primal and dual relationship, analogous to the connection between the exact ALM and PPM \cite{rockafellar1976augmented}). Thus, the number of null steps in our algorithm \alg{} is guaranteed to be bounded, following the analysis in \cref{thm:convergence-bundle}.

\begin{lemma} \label{lemma:null-steps}
Given any sub-optimality measure $\epsilon > 0$. The number of null steps between two~consecutive descent steps in \alg{} is bounded by $\bigO(\epsilon^{-2})$.   
\end{lemma}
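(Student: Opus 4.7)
The plan is to exploit the dual interpretation of \alg{}: viewed in the $y$-variable, each iteration of \alg{} executes a proximal bundle iteration on the dual objective $g$, with the approximated dual $g_k$ defined in \cref{eq:approximated-dual} playing the role of the model $f_k$ in \Cref{alg:bundle} and with proximal parameter $\alpha = \rho$. Once this correspondence is established, the claimed bound on consecutive null steps follows from \Cref{thm:convergence-bundle}(1), since compactness of $\Omega$ implies that $g(y) = \max_{x \in \Omega}\{\langle \Amap x - b, y\rangle - \langle c, x\rangle\}$ is Lipschitz on $\RR^m$.

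To make the correspondence precise, I would verify the three items of \Cref{assump:bm} with $g,g_k$ in place of $f,f_k$. \emph{Lower approximation} $g_k \leq g$ is immediate from $\Omega_k \subseteq \Omega$ (\Cref{assump:AALM}(2)). For the \emph{subgradient} condition, \Cref{assump:AALM}(3) places $v_{k+1}$ with $g(z_{k+1}) = -\calL(v_{k+1},z_{k+1})$ inside $\Omega_{k+1}$, so $g_{k+1}(y) \geq -\calL(v_{k+1},y) = g(z_{k+1}) + \innerproduct{\Amap v_{k+1} - b}{y - z_{k+1}}$, exhibiting $\Amap v_{k+1} - b \in \partial g(z_{k+1})$ as the required subgradient.

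The \emph{aggregation} condition is the main step, and I expect it to be the principal obstacle. The key auxiliary fact is that the primal candidate $w_{k+1}$ from \cref{eq:ALM-exact-set-min-main-text} also solves the un-augmented problem $\min_{x \in \Omega_k}\calL(x,z_{k+1})$. I would derive this from the variational inequality at $w_{k+1}$: for all $x \in \Omega_k$, $\innerproduct{c - \Ajmap y_k - \rho\Ajmap(b - \Amap w_{k+1})}{x - w_{k+1}} \geq 0$. Substituting the dual update $z_{k+1} = y_k + \rho(b - \Amap w_{k+1})$ collapses this to $\innerproduct{c - \Ajmap z_{k+1}}{x - w_{k+1}} \geq 0$, which is exactly the optimality condition for $\min_{x \in \Omega_k}\calL(x,z_{k+1})$; hence $g_k(z_{k+1}) = -\calL(w_{k+1},z_{k+1})$. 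For a null step, \Cref{assump:AALM}(4) gives $w_{k+1} \in \Omega_{k+1}$, so $g_{k+1}(y) \geq -\calL(w_{k+1},y) = g_k(z_{k+1}) + \innerproduct{\Amap w_{k+1} - b}{y - z_{k+1}}$. Finally, \cref{eq:dual-update-ALM} rearranges to $\Amap w_{k+1} - b = \tfrac{1}{\rho}(y_k - z_{k+1})$, which is precisely the aggregate slope $s_{k+1}$ in \Cref{assump:bm}(3) with $\alpha = \rho$.

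With the three conditions in hand, $z_{k+1}$ corresponds to the PBM proximal step $\mathrm{prox}_{\rho g_k}(y_k)$ via the classical identity $\min_{x \in \Omega_k}\calL_\rho(x,y_k) = -\min_{z \in \RR^m}\{g_k(z) + \tfrac{1}{2\rho}\|z - y_k\|^2\}$ (the standard ALM--PPM duality of \cite{rockafellar1976augmented} applied to the restricted primal over $\Omega_k$). The descent test \cref{eq:test} is then syntactically identical to the PBM test \cref{eq:testing-bundle}, and \Cref{thm:convergence-bundle}(1) applies verbatim to yield the $\bigO(\epsilon^{-2})$ bound on consecutive null steps. Beyond the aggregation calculation above, the remaining work is purely a translation of notation from PBM to \alg{}.
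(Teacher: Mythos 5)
Your proposal is correct and follows essentially the same route as the paper: it establishes that \alg{} is a proximal bundle method applied to the dual by verifying the three conditions of \Cref{assump:bm} for the induced models $g_k$ (exactly the content of the paper's \Cref{prop:AALM-PBM} together with \Cref{eq:dual-bala-proximal}), and then invokes \Cref{thm:convergence-bundle}(1) using the Lipschitz continuity of $g$ inherited from compactness of $\Omega$. The only minor difference is that you obtain the key identity $g_k(z_{k+1}) = -\calL(w_{k+1},z_{k+1})$ from the first-order optimality condition of the subproblem \cref{eq:ALM-exact-set-min-main-text} after substituting the dual update \cref{eq:dual-update-ALM}, whereas the paper extracts the same fact from the minimax saddle-point argument in \cref{eq:prox-gkj}; both derivations are valid.
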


We now have our first asymptotic convergence guarantee. 

\begin{theorem}[Asymptotic convergence]
    \label{theorem:asymptotic}
    Fix any $\rho > 0$. Let $\{x_k,y_k\}$ be a sequence from \alg{}. We~have 
    \begin{enumerate}[leftmargin=*]
        \item the sequence $\{y_k\}$ converges to an optimal dual solution,
        \item  $\lim_{k\to \infty} \|\Amap x_{k} - b\| = 0,\lim_{k\to \infty} \innerproduct{c}{x_{k}} - \pstar = 0 $. 
    \end{enumerate}
\end{theorem}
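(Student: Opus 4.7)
The plan is to reduce everything to the dual picture: \alg{}, viewed from the dual side, is precisely PBM applied to $\min_y g(y)$ (this correspondence is what the authors develop in \Cref{subsection:AALA-PBM}), so dual convergence comes from standard PBM theory, and primal convergence then follows from the per-descent-step inequalities of \Cref{prop:connection,lemma:bundle-dual-consecutive,lemma:cost-value}.

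For part 1, I would first invoke the BALA$\leftrightarrow$PBM duality to interpret $\{y_k\}$ as the sequence of proximal centers in PBM for the convex function $g$, whose minimum is attained (Slater plus strong duality). The standard asymptotic analysis of the proximal bundle method (the qualitative counterpart of \Cref{thm:convergence-bundle}; it is the bundle analogue of Rockafellar's PPM convergence \cite{rockafellar1976monotone}) gives that $\{y_k\}$ converges to some $y^\star \in \argmin_y g(y)$, hence an optimal dual solution, and $g(y_k) \to g^\star$ by continuity of $g$. A key monotonicity input is that on null steps $y_{k+1}=y_k$, so $g(y_k)$ is nonincreasing along the entire sequence, and \Cref{lemma:null-steps} guarantees that the descent steps form an infinite subsequence (otherwise we could take $\epsilon$ arbitrarily small and obtain a contradiction with the $\mathcal{O}(\epsilon^{-2})$ null-step bound).

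For part 2, let $\{k_j\}$ be the (infinite) descent-step indices. Along this subsequence, \Cref{lemma:bundle-dual-consecutive} yields
\[
\tfrac{\rho}{2}\,\|\Amap x_{k_j+1}-b\|^2 \;\le\; (g(y_{k_j})-\gstar)/\beta \;\longrightarrow\; 0,
\]
so $\|\Amap x_{k_j+1}-b\|\to 0$. To extend this to all $k$, I would use that (i) $\{y_k\}$ converges, so $\{y_k\}$ and $\{\|y_k\|\}$ are bounded, and (ii) between consecutive descent steps the dual is frozen and the number of null iterations is finite in a quantitative sense controlled by $g(y_{k_j})-\gstar$. Thus the residuals at the intermediate null-step iterates can be absorbed into the same vanishing bound (here one has to track carefully that, in the null-step update $x_{k+1}=w_k$, the candidate $w_k$ lies in $\Omega_{k-1}\subseteq\Omega$, so compactness of $\Omega$ keeps $\{x_k\}$ bounded and any limit point is forced to satisfy $\Amap x=b$ via the vanishing descent-step residual). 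Finally, \Cref{lemma:cost-value} sandwiches $\innerproduct{c}{x_{k+1}}-\pstar$ between $-\|\ystar\|\|\Amap x_{k+1}-b\|$ and $2(g(y_k)-\gstar)/\beta+\rho^2\|y_k\|\|\Amap x_{k+1}-b\|$; both sides vanish because $\|y_k\|$ is bounded, $g(y_k)\to\gstar$, and $\|\Amap x_{k+1}-b\|\to 0$ from the preceding step.

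The main obstacle I expect is the null-step handling for the primal iterates: unlike the dual, the primal is \emph{not} literally frozen during null steps (the algorithm sets $x_{k+1}=w_k$, which can genuinely move during a null phase), and neither \Cref{prop:connection} nor \Cref{lemma:bundle-dual-consecutive,lemma:cost-value} directly bounds $\|\Amap x_{k+1}-b\|$ on null iterations. The cleanest remedy is to import the PBM null-step analysis: during a run of null steps the dual center $y_k$ is fixed and the approximate proximal subproblem's minimizers $w_{k+1}$ converge to the true proximal point, which, combined with \Cref{lemma:null-steps} bounding the length of any null run, lets the descent-step residual bound propagate to all $k$. Everything else in the argument is a routine combination of the already-proved lemmas with the bounded-ness of $\{y_k\}$.
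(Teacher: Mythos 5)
Your proposal reaches the right conclusions but by a genuinely different route than the paper. The paper's proof (\Cref{apx:theorem:asymptotic}) is a pure reduction to inexact ALM: by \Cref{prop:connection}, every descent step produces an iterate $x_{k+1}$ that is an $\epsilon_k$-suboptimal solution of $\min_{x\in\Omega}\calL_\rho(x,y_k)$ with $\epsilon_k=(g(y_k)-g(y_{k+1}))/\beta$, and the key device is that these errors \emph{telescope} when summed over descent indices, giving $\sum_k \epsilon_k \le (g(y_1)-\gstar)/\beta<\infty$. The descent subsequence of \alg{} is therefore literally an iALM sequence with summable inexactness, and Rockafellar's \Cref{thm:ALM-convergence} then delivers \emph{both} parts of the theorem at once (dual convergence, and primal feasibility/cost convergence via \cref{eq:ALM-residual}). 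You instead obtain part 1 from asymptotic PBM theory applied to the dual --- a route the paper explicitly acknowledges (citing Ruszczynski's Theorem 7.16) and deliberately sets aside as less direct --- and you then assemble part 2 by hand from $g(y_{k_j})\to\gstar$ together with \Cref{lemma:bundle-dual-consecutive,lemma:cost-value}. Both routes are sound along descent steps; the paper's buys a two-line proof with no separate primal bookkeeping, while yours is self-contained on the bundle side. Your explicit use of \Cref{lemma:null-steps} to rule out a tail of null steps is a point the paper leaves implicit, so that portion of your argument is actually more careful than the paper's.

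The weak spot is exactly the ``main obstacle'' you flag, and it cuts both ways. First, the obstacle is not specific to your route: the paper's reduction to \Cref{thm:ALM-convergence} likewise covers only descent iterates, and the full-sequence statement follows ``immediately'' only under the reading that iterates are frozen during null steps, which is what the paper tacitly assumes. Second, the remedy you sketch would not close the gap under the literal lagged update $x_{k+1}=w_k$. Within a \emph{finite} null run the candidates never reach the true proximal point, and worse: right after a descent step, \Cref{assump:AALM} permits $\Omega_k$ to be the singleton $\{v_k\}$, in which case the next candidate is a pure dual subgradient step whose affine residual $\|\Amap w_{k+1}-b\|$ equals the norm of a subgradient of the nonsmooth function $g$ at $y_k$ --- a quantity with no reason to vanish as $y_k$ approaches the optimum (think of $|\cdot|$ near the origin). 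So rejected candidates can carry residuals bounded away from zero arbitrarily late in the run of the algorithm, and no PBM null-step asymptotics rescues them. The correct resolution is definitional rather than analytic: interpret the null-step update as $x_{k+1}=x_k$ (equivalently, read the limits in the theorem over descent iterates), after which your descent-step argument --- and the paper's --- is already complete.
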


The proof of \cref{lemma:null-steps} simply requires arguing that the accumulative inexactness provided in \cref{prop:connection} is summable, and then the result follows from \cref{thm:ALM-convergence}. We provide the details in \cref{apx:theorem:asymptotic}. Aside from the asymptotic result, \alg{} also has a sublinear convergence rate.
\begin{theorem}[Sublinear convergences]
    \label{theorem:convergence-bundle-dual}
    For any $\epsilon > 0$, \alg~with parameters $\beta \in (0,1)$ and $\rho > 0$ finds a dual iterate $y_k$ satisfying $g(y_k) - g^\star \leq \epsilon$ in at most $ \bigO \left (\epsilon^{-3}\right )$ number of iterations, and a primal iterate $x_k$ satisfying
    $
            |\innerproduct{c}{x_k} - \pstar| \leq \epsilon\; \text{and}\; \|\Amap x_k - b\| \leq \epsilon
    $
    in at most $ \bigO \left ( \epsilon^{-6} \right )$ number of iterations. Moreover, if we choose $\rho = \epsilon$, then the iteration complexities are improved to $\bigO(\epsilon^{-2})$ and $\bigO(\epsilon^{-4})$ for the dual and primal iterates, respectively. 
\end{theorem}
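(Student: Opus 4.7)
The plan is to apply the PBM convergence result (\Cref{thm:convergence-bundle}) via the primal--dual equivalence between \alg{} and \Cref{alg:bundle} to obtain the dual rate, and then transfer it to primal rates using \Cref{lemma:bundle-dual-consecutive,lemma:cost-value}.

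\textbf{Step 1 (Dual rate via PBM.)} First I would record that the dual $g$ in \cref{eq:dual-minimizatio} is $M$-Lipschitz. Since $\Omega$ is compact, every element of $\partial g(y)$ has the form $\Amap x - b$ with $x \in \Omega$, so one can take $M := \sup_{x \in \Omega}\|\Amap x - b\|$. By the primal--dual correspondence alluded to in the paper (treating \alg{} as PBM applied to the dual with proximal parameter $\alpha = \rho$), \Cref{thm:convergence-bundle} yields $g(y_k) - g^\star \leq \epsilon$ within $\bigO(M^2 \epsilon^{-3})$ total iterations for any fixed $\rho>0$, and within $\bigO(M^2 \epsilon^{-2})$ iterations when $\rho$ is chosen as prescribed in the second part of \Cref{thm:convergence-bundle}.

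\textbf{Step 2 (Primal feasibility.)} By \cref{eq:bound-affine} of \Cref{lemma:bundle-dual-consecutive}, every descent step satisfies $\rho\|\Amap x_{k+1} - b\|^2/2 \leq (g(y_k) - g^\star)/\beta$. Thus $\|\Amap x_{k+1} - b\| \leq \epsilon$ is guaranteed once the dual sub-optimality falls below $\rho\beta\epsilon^2/2$. Feeding this tightened tolerance $\epsilon' = \Theta(\epsilon^2)$ back into Step~1 gives the generic primal-feasibility complexity $\bigO(\epsilon^{-6})$, which improves to $\bigO(\epsilon^{-4})$ under the appropriate $\rho$ tuning.

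\textbf{Step 3 (Cost value.)} \Cref{lemma:cost-value} sandwiches the cost gap as
\[
-\|y^\star\|\|\Amap x_{k+1} - b\| \;\leq\; \innerproduct{c}{x_{k+1}} - p^\star \;\leq\; 2(g(y_k)-g^\star)/\beta + \rho^2 \|y_k\| \|\Amap x_{k+1} - b\|.
\]
The lower bound inherits its smallness from Step~2; the first term on the right from Step~1 (with even a looser tolerance than needed for feasibility); and the second term from Step~2, noting $\|y_k\|$ is uniformly bounded since $\{y_k\}$ converges by \Cref{theorem:asymptotic}. Hence the cost-value gap achieves the same iteration complexity as feasibility.

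Two bookkeeping items need care, and these are where I expect the main obstacle to lie. First, \Cref{lemma:bundle-dual-consecutive,lemma:cost-value} apply only to descent steps, whereas the PBM complexity counts both descent and null steps; one must argue that once the dual sub-optimality threshold has been crossed, a descent step at which the primal bounds are inherited occurs within the total iteration budget. This is guaranteed by \Cref{lemma:null-steps}, which caps consecutive null steps at $\bigO(\epsilon^{-2})$, so the eventual descent step falls within the claimed count. Second, the factor $\|y_k\|$ in the cost-value bound must be controlled uniformly in $k$; this again follows from the boundedness of the convergent dual sequence in \Cref{theorem:asymptotic}. The principal technical obstacle is the careful propagation of the $\rho$ parameter through both the PBM complexity and the primal-conversion inequality simultaneously, so that a single choice of $\rho$ realizes the improved exponents in the second assertion of the theorem.
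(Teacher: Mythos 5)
Your proposal takes essentially the same route as the paper's own proof: establish Lipschitz continuity of $g$ from compactness of $\Omega$, invoke the primal--dual equivalence with PBM together with \Cref{thm:convergence-bundle} to get the dual rate, and then convert to the primal rates via \Cref{lemma:bundle-dual-consecutive,lemma:cost-value}, which forces the squared tolerance and hence the $\bigO(\epsilon^{-6})$ (resp.\ $\bigO(\epsilon^{-4})$) counts. Your two bookkeeping items (a descent step occurring within the budget, via \Cref{lemma:null-steps}, and uniform boundedness of $\|y_k\|$ from \Cref{theorem:asymptotic}) are exactly what the paper relies on as well, so the argument matches.
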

The proof is provided in \cref{apx:theorem:convergence-bundle-dual}. Our result in \cref{theorem:convergence-bundle-dual} resembles the findings of bundle methods in \cite[Lemma 3.4-3.6]{ding2023revisiting} in the sense that the dual cost gap controls the affine feasibility and the primal cost gap. Despite the similarity, our results in \cref{theorem:asymptotic} further confirm the convergence of the sequence $\{y_k\}$. We note that the convergence of $\{y_k\}$ can also be established from bundle method analysis, e.g. \cite[Theorem 7.16]{ruszczynski2011nonlinear}. However, our analysis is more straightforward by arguing the accumulative inexactness is bounded and, thus, the convergence of dual iterates follows from classical results in \cite{rockafellar1976augmented}.

In \Cref{theorem:convergence-bundle-dual}, the complexity of the primal residuals $\| \Amap x_k - b\|$ and $|\innerproduct{c}{x_k} - \pstar|$ is a square-order slower than the dual cost gap. This is because the analysis does not fully utilize the connection between \alg{} and iALM. Let $\calS_k = \{ j \in \mathbb{N} :\text{step } j \text{ is a descent step}, j \leq k \} \cup \{0\}$ be the set of indices for descent steps before iteration $k$. We establish an improved rate for average iterates here.
\begin{theorem}[Average iterates] 
    \label{thm:average-iterate}
     For any $\epsilon > 0$, \alg{} with an initialization $y_1 = 0$ and parameters $\beta \in  (0,1)$ and $\rho  >  0$ finds an average iterate $\Bar{x}_k  =  \sum_{j \in \calS_k} \bar{x}_{j+1} /|\calS_k| $ satisfying $ |\Amap  \Bar{x}_k - b\|\leq \epsilon$ and $|\innerproduct{c}{\Bar{x}_k} - \pstar |  \leq \epsilon$ in at most $\bigO(\epsilon^{-3})$ number of iterations. 
\end{theorem}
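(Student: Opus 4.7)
The plan is to exploit the fact that on descent steps \alg{} mirrors the exact iALM dual update $y_{j+1} = y_j + \rho(b - \Amap x_{j+1})$, whereas null steps leave $y_j$ (and the running primal iterate) unchanged. Averaging the primal residual and the primal cost gap over the descent-index set $\calS_k$ therefore creates a clean telescoping that decays like $1/|\calS_k|$. The argument splits into three pieces: (i) a telescoping bound on $\|\Amap \bar{x}_k - b\|$, (ii) matching upper and lower bounds on $\langle c, \bar{x}_k\rangle - \pstar$, and (iii) a counting step that converts a target number of descent steps into a total iteration count via \Cref{lemma:null-steps}.

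For the feasibility bound (i), I would write
\[
    b - \Amap \bar{x}_k = \frac{1}{|\calS_k|}\sum_{j \in \calS_k}(b - \Amap x_{j+1}) = \frac{1}{\rho|\calS_k|}\sum_{j \in \calS_k}(y_{j+1} - y_j),
\]
using the exact iALM identity on descent steps. Because null steps satisfy $y_{j+1} = y_j$, the sum in fact extends over all $j \in \{1,\ldots,k\}$ and telescopes to $y_{k+1}/(\rho|\calS_k|)$ thanks to the initialization $y_1 = 0$. \Cref{theorem:asymptotic} guarantees that $\{y_k\}$ converges and is therefore uniformly bounded by some constant $M_y$, which yields $\|\Amap \bar{x}_k - b\| \leq M_y/(\rho|\calS_k|) = \bigO(1/|\calS_k|)$.

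For the cost value bound (ii), I would combine \Cref{prop:connection} with the classical iALM cost identity \eqref{eq:ALM-cost}: at each descent step $j$,
\[
    \innerproduct{c}{x_{j+1}} - \pstar \leq \frac{g(y_j) - g(y_{j+1})}{\beta} + \frac{\|y_j\|^2 - \|y_{j+1}\|^2}{2\rho}.
\]
Summing over $j \in \calS_k$ telescopes (null steps add nothing since $y_{j+1} = y_j$), and with $y_1 = 0$ I obtain $\innerproduct{c}{\bar{x}_k} - \pstar \leq (g(y_1) - \gstar)/(\beta|\calS_k|) = \bigO(1/|\calS_k|)$. A matching lower bound comes from a weak-duality manipulation: $\innerproduct{c}{\bar{x}_k} \geq \calL(\bar{x}_k, \ystar) - \|\ystar\|\|\Amap \bar{x}_k - b\| \geq -\gstar - \|\ystar\|\|\Amap \bar{x}_k - b\| = \pstar - \|\ystar\|\|\Amap \bar{x}_k - b\|$, so $\innerproduct{c}{\bar{x}_k} - \pstar \geq -\|\ystar\|\cdot\bigO(1/|\calS_k|)$ by part (i).

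Both bounds force $|\calS_k| = \Omega(1/\epsilon)$ to reach the target accuracy. By \Cref{lemma:null-steps}, consecutive descent steps are separated by at most $\bigO(\epsilon^{-2})$ null steps as long as the dual sub-optimality exceeds $\epsilon$, so accumulating $\Omega(1/\epsilon)$ descent steps costs a total of $\bigO(\epsilon^{-3})$ iterations. I expect the main obstacle to be the interplay between parts (i)--(iii): one must verify that $M_y$ is a genuine constant independent of $k$ (which follows from \Cref{theorem:asymptotic}) and, more delicately, that the $\bigO(\epsilon^{-2})$ null-step bound from \Cref{lemma:null-steps} applies uniformly across every descent-step interval encountered while $g(y_k) - \gstar > \epsilon$, so that the conversion from $|\calS_k|$-complexity to $k$-complexity in the last step is rigorous.
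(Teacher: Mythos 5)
Your proof is correct, and it reaches the stated complexity by a genuinely more self-contained route than the paper. The paper also begins from \Cref{prop:connection} (a descent step of \alg{} is an inexact ALM step with inexactness $\epsilon_j = (g(y_j)-g(y_{j+1}))/\beta$), but it then invokes an external ergodic result for inexact ALM, \cite[Theorem 4]{xu2021iteration} (restated as \Cref{lemma:egrotic-rate}), as a black box: plugging the telescoping inexactness sum $\sum_j \epsilon_j \le (g(y_1)-\gstar)/\beta$ into that lemma yields \Cref{cor:primal-average}, and the $\bigO(\epsilon^{-3})$ count then follows exactly as in your part (iii). You instead re-derive the ergodic bounds directly: the feasibility bound by telescoping the dual updates $y_{j+1}-y_j = \rho(b-\Amap x_{j+1})$ across descent steps (null steps contributing zero), and the cost upper bound by summing Rockafellar's per-iterate inequality \cref{eq:ALM-cost} with the same $\epsilon_j$, together with a weak-duality argument for the matching lower bound. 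What each approach buys: yours is elementary, avoids the external citation, makes transparent exactly where $y_1 = 0$ is used (it kills the $\|y_1\|^2$ and $y_1$ terms in both telescopes), and gives slightly cleaner constants; the paper's route is shorter given the citation and expresses its constants through $\|\ystar\|$ alone, whereas your feasibility bound needs the uniform bound $M_y = \sup_k \|y_k\|$, which you legitimately obtain from \Cref{theorem:asymptotic} (no circularity, since that theorem is proved independently of this one).

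Two small points, neither fatal and both shared with the paper's own write-up. First, $\calS_k$ contains the index $0$, and the $j=0$ term $b-\Amap x_1$ does not satisfy the dual-update identity, so your telescopes are off by one bounded term; since $\Omega$ is compact, this adds only $\bigO(1/|\calS_k|)$ and the rate is unaffected. Second, the conversion from descent-step count to total iteration count via \Cref{lemma:null-steps} carries the uniformity caveat you flag at the end; the paper performs exactly the same counting without further comment, so your treatment is at parity with its level of rigor.
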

The proof leverages the relation between \alg{} and iALM, as well as a recent result in \cite[Theorem 4]{xu2021iteration}; we provide the proof details in \cref{apx:thm:average-iterate}. {We remark that since \alg{} has a dual interpretation of PBM (see \cref{subsection:AALA-PBM}), \cref{thm:average-iterate} also holds for the average iterates in PBM. To our best knowledge, \cref{thm:average-iterate} is the first convergence result on average iterates of PBM, which gives a big improvement in iteration complexity. Precisely, the primal average iterate $\Bar{x}_k$ improves the iteration complexity from $\bigO(\epsilon^{-6})$ to $\bigO(\epsilon^{-3})$, compared with  the last iterate guarantee in \cref{theorem:convergence-bundle-dual}.} 

    \begin{remark}[Comparison with existing works]
    We compare \alg{} with other recent works in the ALM framework \cite{yurtsever2019conditional,yurtsever2021scalable,garber2023faster}. 
   Our algorithm \alg{} has sublinear convergences for any fixed parameter $\rho>0$. However, the convergence of \CGAL{} in \cite{yurtsever2019conditional,yurtsever2021scalable} requires an increasing penalty term $\rho_k$ that approaches infinity (in this sense, \CGAL{} is more like a penalty method, instead of ALM). As a result, \CGAL{} has no guarantee of the convergence of dual sequence $\{y_k\}$. In contrast, our \alg{} guarantees the convergence of the dual sequence as  $ \bigO \left (\epsilon^{-3}\right )$. More recently, \cite{garber2023faster} refined \CGAL{}  and improved the iteration complexity for the primal residuals to $\bigO(\epsilon^{-1})$ under stronger assumptions. Still, the convergence of the dual sequence $\{y_k\}$ remains unknown in \cite{garber2023faster}. We will next establish that \alg{} can achieve linear convergence under certain regularity conditions. In those cases, \alg{} outperforms \CGAL{} and its variants significantly; we also observe these improvements in our numerical experiments.   \hfill $\square$
    \end{remark}

\subsection{Linear convergences under regularity conditions}
\label{subsection:linear}
We have established the sublinear convergences of \alg{}  when the construction of $\Omega_k$ satisfies \cref{assump:AALM}. We here show linear convergence for \alg{} under a stronger assumption on $\Omega_k$ and the dual function $g$ defined in \eqref{eq:dual-minimizatio}. 

Our intuition is inspired by the recent advances in linear convergences in inexact ALM and PPM \cite{cui2019r,liao2024inexact,liao2024error}. All these results require a \textit{quadratic growth} condition, i.e., there is a constant $\alpha > 0$ such that 
\begin{equation}
\label{eq:QG}
    g(y) - g^\star \geq \frac{\alpha}{2} \cdot  \Dist^2(y,\dsolution), \quad \forall y \in \mathbb{R}^m, 
\end{equation}
where $\dsolution$ is the optimal solution set in \cref{eq:dual-minimizatio}. As our \alg~is closely related to iALM and PBM (see \cref{section:AALA-PBM}), we may thus expect linear convergence when $g$ satisfies quadratic growth \cref{eq:QG}. However, the iALM assumes an oracle to obtain an inexact point $x_{k+1}$ satisfying \eqref{eq:ALM-inexact-min}. In contrast, \alg{} is single-loop and incorporates all the intermediate steps to update the next point $x_{k+1}$ (i.e., null steps). We need another key assumption: there exists a $\gamma > 0$, such that the approximated dual function $g_k$ in \eqref{eq:approximated-dual} satisfies 
\begin{equation}
    \label{eq:quadratic-close}
    g_k(y) \leq  g(y)  \leq g_k(y) + \frac{\gamma}{2}\|y-y_k\|^2,\; \forall y \in \RR^m.
\end{equation}
This condition means that the lower approximation function $g_k$ captures the true function $g$ well up to a quadratic error. 

\begin{theorem}[Linear convergences] 
    \label{thm:linear-convergence}
    Suppose that the dual function $g$ satisfies \eqref{eq:QG} and the approximation function $g_k$ in \eqref{eq:approximated-dual} satisfies \eqref{eq:quadratic-close} for all $k\geq T$ with a constant $\gamma > 0$. Choose $\beta \in (0,1/2]$ and an augmented Lagrangian constant $\rho\geq \frac{1}{\gamma}$ in \alg{}. For all iterations $k \geq T$, we~have:     
    \begin{enumerate}[leftmargin=*]
    \setlength{\itemsep}{0pt}
        \item Every iteration is a descent step, i.e., \eqref{eq:test} is satisfied;
        \item There exists two constants $\mu_1 \in (0,1), \mu_2 > 0$ such that  $\Dist(y_{k+1}, \dsolution) \leq \mu_1 \cdot \Dist(y_{k}, \dsolution)$ and $\max\{g(y_k) - g^\star, \|\Amap x_{k} - b \|^2, | \innerproduct{c}{x_k} - \pstar |^2\}  \leq \mu_2  \cdot  \Dist(y_{k}, \dsolution)$. 
    \end{enumerate}
\end{theorem}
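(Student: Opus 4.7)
The plan is to prove the two claims sequentially, exploiting the primal--dual interpretation that \alg{} is effectively PPM applied to the approximated dual $g_k$. First I would establish claim~(1)---every step from $k \geq T$ onward is a descent step---by induction on $k$. Then, with every step being a descent step, I would argue linear contraction of $\Dist(y_k, \dsolution)$ by combining the iALM-style inexactness bound from \Cref{prop:connection} with a classical PPM-under-quadratic-growth argument, and translate this to the primal residual and cost gap via \Cref{lemma:bundle-dual-consecutive,lemma:cost-value}.

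For the descent-step claim, the key ingredients are: (a) when step $k$ is a descent step, the bundle rule \Cref{assump:AALM}(3) ensures $v_k \in \Omega_k$, which combined with the lower-approximation inequality in \cref{eq:quadratic-close} yields $g_k(y_k) = g(y_k)$; (b) the primal--dual equivalence identifies $z_{k+1}$ as the unique minimizer of the $1/\rho$-strongly convex function $y \mapsto g_k(y) + \tfrac{1}{2\rho}\|y - y_k\|^2$, producing a lower bound of the form $g(y_k) - g_k(z_{k+1}) \geq \tfrac{1}{\rho}\|z_{k+1} - y_k\|^2$; and (c) the upper inequality in \cref{eq:quadratic-close} gives $g(z_{k+1}) \leq g_k(z_{k+1}) + \tfrac{\gamma}{2}\|z_{k+1} - y_k\|^2$. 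Combining (a)--(c) and rearranging should yield
\begin{equation*}
g(y_k) - g(z_{k+1}) \;\geq\; \bigl(1 - \tfrac{\rho\gamma}{2}\bigr)\bigl(g(y_k) - g_k(z_{k+1})\bigr),
\end{equation*}
so \cref{eq:test} is automatically satisfied once the ratio $\rho\gamma/2$ is compatible with $1-\beta$, matching the hypothesis on $\beta$ and $\rho$. The induction then closes by noting that on a descent step the bundle update enforces $v_{k+1}\in\Omega_{k+1}$, reproducing the base of (a) at the next iteration.

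For linear contraction, once every iteration is a descent step, \Cref{prop:connection} shows \alg{} is an iALM with inexactness $\epsilon_k = (g(y_k) - g(y_{k+1}))/\beta$, automatically controlled by the dual objective drop. Feeding this into the Rockafellar--Luque analysis of PPM under the quadratic growth \cref{eq:QG}, while using \cref{eq:quadratic-close} to bridge $g_k$ and $g$ inside the contraction estimate, yields $\Dist(y_{k+1}, \dsolution) \leq \mu_1 \Dist(y_k, \dsolution)$ with $\mu_1 \in (0,1)$ depending on $\alpha, \gamma, \rho, \beta$. The remaining bounds on $g(y_k) - g^\star$, $\|\Amap x_k - b\|^2$, and $|\innerproduct{c}{x_k} - \pstar|^2$ then follow from \Cref{lemma:bundle-dual-consecutive,lemma:cost-value}, after invoking the Lipschitz upper bound $g(y_k) - g^\star \leq L\, \Dist(y_k, \dsolution)$ coming from compactness of $\Omega$.

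The main obstacle I expect is the first part: pinning down the exact constant balance so that the strong-convexity lower bound on $g_k(y_k)-g_k(z_{k+1})$ dominates the quadratic-closeness error $\tfrac{\gamma}{2}\|z_{k+1}-y_k\|^2$ with a factor of $(1-\beta)$ to spare, uniformly for all $k \geq T$. A secondary delicate point is adapting the classical PPM contraction to the approximated setting, since the step is $\mathrm{prox}_{\rho g_k}$ rather than $\mathrm{prox}_{\rho g}$; the quadratic-closeness bound must be carried through the entire contraction argument and must not degrade $\mu_1$ past $1$. A minor subtlety is the base case $k = T$, for which one needs $g_T(y_T) = g(y_T)$---ensured if $T$ is taken to be (or follow) a descent step so that \Cref{assump:AALM}(3) applies at $T$.
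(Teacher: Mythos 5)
Your treatment of claim (1) is correct and is essentially the paper's argument in a more direct form: strong convexity of $g_k+\tfrac{1}{2\rho}\|\cdot-y_k\|^2$ at its minimizer $z_{k+1}$ gives $g(y_k)-g_k(z_{k+1})\geq \tfrac{1}{\rho}\|z_{k+1}-y_k\|^2\geq 0$, and the upper inequality in \cref{eq:quadratic-close} then yields $g(y_k)-g(z_{k+1})\geq \bigl(1-\tfrac{\rho\gamma}{2}\bigr)\bigl(g(y_k)-g_k(z_{k+1})\bigr)$, so \cref{eq:test} holds whenever $1-\rho\gamma/2\geq\beta$; the paper reaches the same conclusion by routing through its abstract \Cref{thm:error-quadratic} and the bound $g(z_{k+1})-g_k(z_{k+1})\leq g(y_k)-g(z_{k+1})$. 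Two remarks. First, your induction and the identity $g_k(y_k)=g(y_k)$ are unnecessary: your chain only uses the one-sided bound $g_k\leq g$, which \Cref{assump:AALM} guarantees at \emph{every} step, so there is no base-case issue at $k=T$. Second, both your computation and the paper's own proof actually require $\tfrac{1}{\rho}\geq\gamma$, i.e.\ $\rho\leq 1/\gamma$; the statement's ``$\rho\geq\tfrac{1}{\gamma}$'' appears to be a typo, and your inequality lands on the side that the proof genuinely needs.

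The gap is in claim (2). Your primary route --- treat \alg{} as an iALM with inexactness $\epsilon_k=(g(y_k)-g(y_{k+1}))/\beta$ from \Cref{prop:connection} and feed this into the Rockafellar--Luque analysis --- does not close under the stated hypotheses. Function-value inexactness of the AL subproblem translates (via strong convexity of the Moreau-envelope problem) only into $\|z_{k+1}-\mathrm{prox}_{\rho g}(y_k)\|\leq\sqrt{2\rho\epsilon_k}$, and the sharpest \emph{relative} bound one can extract from \cref{eq:quadratic-close} is $\|z_{k+1}-\mathrm{prox}_{\rho g}(y_k)\|\leq\sqrt{\rho\gamma}\,\|z_{k+1}-y_k\|$. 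Since the hypotheses only give $\rho\gamma\leq 1$, this relative error constant can be as large as $1$, and the Luque-type perturbation bookkeeping (triangle inequalities combining the exact-PPM contraction factor with the prox error) then produces a factor that need not be below one; a black-box invocation would force a much stronger restriction on $\rho\gamma$ than the theorem assumes. What actually works --- and is exactly what the paper isolates as \Cref{thm:error-quadratic} --- is the direct one-step contraction for the \emph{exact} prox on $g_k$ that you relegate to a ``secondary delicate point'': compare the strongly convex objective $g_k+\tfrac{1}{2\rho}\|\cdot-y_k\|^2$ at $z_{k+1}$ against the projection $\Pi_{\dsolution}(y_k)$, use $g_k(\Pi_{\dsolution}(y_k))\leq g^\star$ (lower approximation), upgrade $g_k(z_{k+1})$ to $g(z_{k+1})$ via \cref{eq:quadratic-close} together with $1/\rho\geq\gamma$, and then invoke \cref{eq:QG}; this yields $\Dist(y_{k+1},\dsolution)\leq\sqrt{\tfrac{1/\rho}{1/\rho+\alpha}}\,\Dist(y_k,\dsolution)$ with no degradation. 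Your final step --- converting the dual contraction into the bounds on $g(y_k)-g^\star$, $\|\Amap x_k-b\|^2$, and $|\innerproduct{c}{x_k}-\pstar|^2$ via Lipschitzness of $g$ and \Cref{lemma:bundle-dual-consecutive,lemma:cost-value} --- matches the paper.
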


The proof is given in \cref{apx:thm:linear-convergence}, which relies on an abstraction of the analysis techniques in \cite{ding2023revisiting,liao2023overview} that focus on semidefinite programs (SDPs). Indeed, our algorithm \alg{} can be directly applied to SDPs, where all steps in \Cref{alg:bundle-dual} have explicit constructions. We outline the details of \alg{} when applied to solving SDPs in \Cref{section-application-SDP}. In this case, \alg{} closely resembles the classical spectral bundle method in \cite{helmberg2000spectral}; see \cite{ding2023revisiting,liao2023overview} for recent discussions. 
 
We further note that the quadratic growth \cref{eq:QG} holds for a class of SDPs \cite{ding2023revisiting,liao2024inexact} that arise in a range of applications, e.g., Max-Cut \cite{goemans1995improved} and matrix optimization \cite{cui2017quadratic}. The quadratic closeness condition \eqref{eq:quadratic-close} is also true when  $\Omega_k$ captures the rank of an optimal solution in SDPs \cite{ding2023revisiting}. In~particular, if an SDP admits a rank-one optimal solution, the set $\Omega_k$ can be constructed in a simple set while ensuring \eqref{eq:quadratic-close}. Thus, for SDPs with rank-one solutions, our algorithm \alg{} enjoys fast linear convergences (guaranteed by \cref{thm:linear-convergence}) while admitting extremely efficient updates (guaranteed by the simple subproblem \cref{eq:ALM-exact-set-min-main-text} with an analytical solution). Our algorithm significantly outperforms the conditional-gradient Augmented Lagrangian (\CGAL) method in \cite{yurtsever2021scalable,yurtsever2019conditional}.

\section{Numerical experiments}
\label{section:experiment}
We here demonstrate the numerical performance of \alg. We compare \alg~against \CGAL~\cite{yurtsever2019conditional},~since both \alg~and \CGAL~are designed within the ALM framework. Moreover, \CGAL~has~been~used to design a scalable solver for SDPs \cite{yurtsever2021scalable}. The details of applying \alg{} to SDPs are given in \Cref{section-application-SDP}. More extensive numerical results are provided in \Cref{apx:section:detail-numerical}.

\begin{figure}[t]
\setlength\textfloatsep{0pt}
\captionsetup{skip=0pt}
     \centering \vspace{-3mm}
{\includegraphics[width=1\textwidth]
{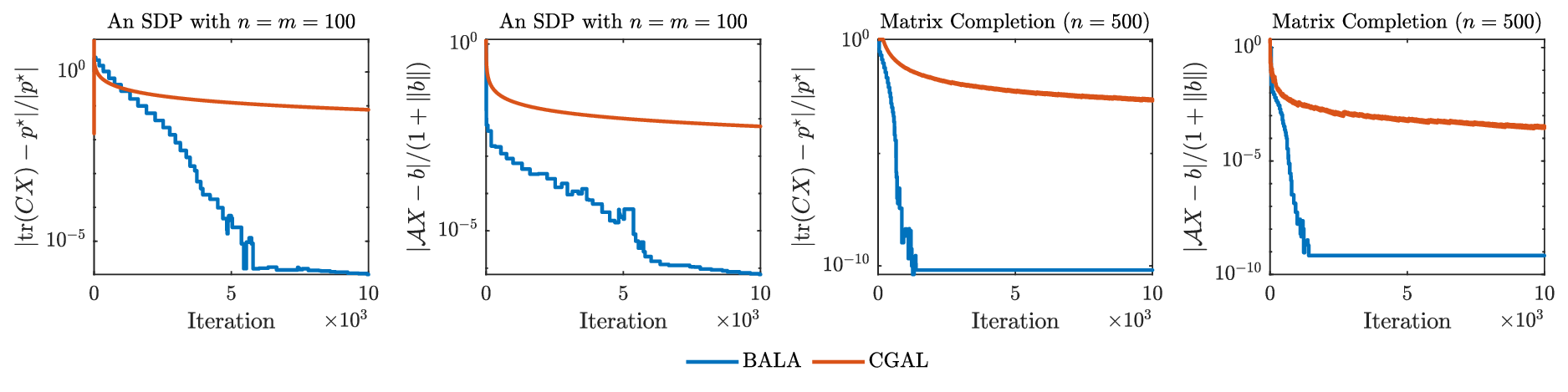}}
\caption{Comparison between our proposed \alg{} (\Cref{alg:bundle-dual}) and \CGAL \cite{yurtsever2019conditional,yurtsever2021scalable}.} \vspace{-3mm}
\label{fig:numerical-experiment}
\end{figure}

We here consider two SDPs of the form
\begin{align*}
    \min_{X \in \sn} \{ \Trace(CX) : \Amap X = b, \Trace(X) \leq a ,X \in \snp\},
\end{align*}
where $C \in \sn, \Amap : \sn \to \RR^m$ and $ a \geq 0$ are the problem data.~This type of SDP is in the form of \eqref{eq:primal} with the compact set $\Omega = \{ X \in \snp : \Trace(X) \leq a \}$. Our first instance is a SDP  with randomly generated data, where the dimension is $n=m=100$. The second SDP is a convex relaxation for matrix completion \cite{candes2012exact} with dimensions being $n = 500$ and $m = 12414$. More experimental details are provided in \cref{apx:section:detail-numerical}.

For a fair comparison, we construct the set $\Omega_{k}$ as $ \{ \alpha v_{k} + \beta w_{k} : \alpha \geq 0, \beta \geq 0, \alpha + \beta \leq 1 \}$ for all $k$ and generate the SDPs that admit at least a rank-one solution. 
In this case, the approximated dual function captures the true dual function up to a quadratic term \cite{ding2023revisiting,liao2023overview}, and the subproblem can also be solved analytically. Therefore, as guaranteed by \Cref{thm:linear-convergence}, we expect fast convergences for \alg{}. 
As shown in \cref{fig:numerical-experiment}, \alg{} indeed significantly outperforms \CGAL{} in both randomly generated SDP and the matrix competition problem. In the former case, \alg{} quickly reaches the accuracy of $10^{-5}$, while \CGAL{} suffers from slow convergence and can only obtain the accuracy of $10^{-2}$ after 10000 iterations. 
In the case of matrix competition, \alg{} even achieves a $10^{-9}$ accuracy, while \CGAL{} only achieves $10^{-3}$ accuracy after 10000 iterations. 

In \cref{apx:section:detail-numerical}, we show that \alg{} achieves superior performance in many other SDPs, including those from binary quadratic and polynomial optimization problems. 

\section{Conclusion}
\label{section:conclusion} \vspace{-1mm}
In this paper, we have introduced a novel bundled-based Augmented Lagrangian framework and~a new algorithm  \alg{}. Our algorithm enjoys a sublinear convergence under a mild assumption and achieves fast linear convergence when certain regularity conditions are satisfied. The superior~performance of \alg{} is demonstrated in numerical experiments. We are interested in applying \alg{} to solve large-scale conic programs. The connections between \alg{} with other first-order algorithms, such as momentum-based and Frank-Wolfe-type algorithms, are also interesting.

{\small 
\bibliographystyle{unsrt}
\bibliography{reference.bib}
}

\small 
\newpage
\appendix
\numberwithin{equation}{section}
\section*{Appendix}

\textbf{Organization} In this appendix, we provide extra discussions, proof details, and further computational results. In particular, we organize this appendix as follows:

\vspace{-2mm}

\begin{itemize}
\setlength{\itemsep}{0pt}
\item \cref{section:computation} discusses the computation for solving the subproblem in \alg{};
\item \cref{section:AALA-PBM} establishes connections of \alg{} to inexact ALM and proximal bundle method;
\item \cref{appendix:subgradient-methods} discusses the connection of \alg{} to the dual (sub)gradient method;
\item \cref{apx:section:missing-proof} completes the missing technical proofs in \cref{section:convergence}; 
\item \cref{section-application-SDP} shows that application of \alg{} in semidefinite programs;
\item \cref{apx:section:illustration} illustrates the procedure of \alg{} using a simple example;
\item \cref{apx:section:detail-numerical} details the numerical experiment in \cref{section:experiment} and presents additional experiments. 
\end{itemize}

\section{Computations in \alg} \label{appendix:computations-of-BALA}
\label{section:computation}
We here discuss computation details in our proposed \alg{} (see \cref{alg:bundle-dual}). At every iteration $k$, \alg{} performs three updates: a) computing a pair of primal and dual candidates from \cref{eq:ALM-exact-set-min-main-text}; b) testing the descent progress; c) updating the approximation $\Omega_{k+1}$. Typically, computing the primal candidate $w_{k+1}$ is the most computationally intensive step, which needs to solve the subproblem \cref{eq:ALM-exact-set-min-main-text}. 
The update of a dual candidate from \cref{eq:dual-update-ALM} is computationally trivial. 

\subsection{Analytical solution to the subproblem \cref{eq:ALM-exact-set-min-main-text}}

For convenience, we reproduce the subproblem \cref{eq:ALM-exact-set-min-main-text} below 
\begin{equation}
 \label{eq:ALM-exact-set-min-app}
    w_{k+1} \in \argmin_{x \in \Omega_{k}}\; \calL_{\rho}(x,y_k) = \innerproduct{c}{x} + \innerproduct{y_k}{b-\Amap x} + \frac{\rho}{2}\|b - \Amap x \|^2. 
\end{equation}
This is a convex problem with a quadratic cost function. Since the cost function is quadratic, the complexity of solving \cref{eq:ALM-exact-set-min-app} largely depends on the constraint set $\Omega_{k}$. Our idea of \alg{} is to design a simple constraint set $\Omega_k$ so that \cref{eq:ALM-exact-set-min-app} can be solved efficiently. Our requirement regarding the update of $\Omega_{k}$ is listed in \cref{assump:AALM}.

In particular, the inner approximations $\Omega_{k}$ should contain a bundle of past iterates to approximate $\Omega$. As listed in \cref{assump:AALM}, it can be as simple as a line segment, including $v_{k}$ (from its dual) and $w_{k}$ (from its primal), i.e., 
$$
\Omega_k = \mathrm{conv}(v_k,w_{k}) := \{\alpha  v_k + (1-\alpha)  w_k :  0 \leq \alpha \leq 1 \}.
$$ In this case, the subproblem \cref{eq:ALM-exact-set-min-app} becomes an equality-constrained quadratic program with a scalar variable $0 \leq \alpha \leq 1$. Its analytical solution is given by a saturation operator, $\alpha^\star = \mathrm{sat}(\phi)$, given below   
\begin{align*}
   \mathrm{sat}(\phi) = \begin{cases}
        0, &\text{if } \phi < 0, \\
        \phi, &\text{if }  0 \leq \phi \leq 1,  \\
        1, &\text{if } \phi > 1,
    \end{cases}
\end{align*}
where we have defined 
$$
\phi = \frac{\rho \innerproduct{\Ajmap (b-\Amap w_{k})}{v_k - w_k} + \innerproduct{\Ajmap y_k - c }{ v_k - w_k} }{\rho \|\Amap (v_k - w_k)\|^2}.
$$

As shown in \Cref{theorem:asymptotic,theorem:convergence-bundle-dual,thm:average-iterate}, this simple update already guarantees the asymptotic and sublinear convergences of our algorithm \alg{}. Depending on different applications, one may use a few more past iterates to construct the inner approximations $\Omega_k$, which may give a better convergence behavior. Indeed, as we prove in \cref{thm:linear-convergence},  \alg{} enjoy linear convergences if 1) the dual optimization satisfies a \textit{quadratic growth} condition \cref{eq:QG}, and 2) the inner approximation $\Omega_k$ is chosen such that the induced dual approximation is \textit{quadratically close} to the original dual function \cref{eq:quadratic-close}. In this case, one may want to choose a few more points to construct $\Omega_k$ that requires slightly more computation to solve \cref{eq:ALM-exact-set-min-app}. 

For example, in SDP applications, if an optimal solution is of low rank, a good strategy is to design an inner approximation $\Omega_k$ that captures the rank of the optimal solutions \cite{ding2023revisiting,liao2023overview}. In this situation, the subproblem can also be solved efficiently as it only involves a small semidefinite constraint. Moreover, linear convergence is also guaranteed under mild assumptions (e.g., strict complementarity). 
We present further details of applying \alg{} to solve SDPs in \Cref{section-application-SDP}.

\subsection{Computations in the descent progress test and inner approximation update}
\label{apx:subsection:compuation}
Aside from the main computation of solving the subproblem \cref{eq:ALM-exact-set-min-main-text}, we discuss the computation for the test \eqref{eq:test} and update the set $\Omega_{k+1}$. We first discuss the computation for the test \eqref{eq:test}. At iteration $k$, we recall that the test requires the knowledge of the three values $g(y_k)$, $g(z_{k+1}),$ and $g_k(z_{k+1})$. In general, obtaining $g(y_k)$ and $g(z_{k+1})$ requires solving an optimization problem and the complexity depends on the difficulty of $\Omega$. However, when dual function $g$ admits an analytical form, obtaining $g(y_k)$ and $g(z_{k+1})$ simply requires two evaluations of the function. This is indeed the case when the compact set $\Omega$ is well-structured. For example, three common symmetric cones (nonnegative orthant, second-order cone, and positive semidefinite cone) with the natural bound admit this property.
\begin{itemize}[leftmargin=*]
    \item (Nonnegative orthant) Let $\Omega =\{x \in \RR^n_+ : \|x\|_1 \leq a\}$ with $a \geq 0$. The dual function $g$ can be shown as \begin{equation*}
    g(y) = -\innerproduct{b}{y} + a \max\{(\Ajmap y -c)_1,\ldots,(\Ajmap y -c)_n ,0 \},
\end{equation*}
where $(\Ajmap y -c)_i$ denotes the $i$-th element of the vector $\Ajmap y -c$.
    \item (Second-order cone) Let  $\Omega = \{x \in \RR^{n+1} : \|x_{1:n}\|\leq x_{n+1}, x_{n+1} \leq a \}$ with $a \geq 0$. The dual function $g$ takes the form 
    \begin{equation*}
    g(y) = -\innerproduct{b}{y} + a \max\{ \| (\Ajmap y -c)_{1:n} \| - (\Ajmap y -c)_{n+1},0\},
\end{equation*}
where $(\Ajmap y -c)_{1:n}$ denotes the first $n$ elements of $\Ajmap y -c$. 
    \item (Positive semidefinite cone) Let $ \Omega=  \{x\in \snp : \Trace(x) \leq a \}$ with $a \geq 0$. The dual function $g$ becomes 
    \begin{equation*}
        g(y) = -\innerproduct{b}{y} + a \max\{\lambda_{\max}(\Ajmap y - c),0\}.
    \end{equation*}
\end{itemize}
In addition, if the step $k-1$ is a null step, then we have $g(y_{k}) = g(y_{k-1})$. If the step $k-1$ is a descent step, then we have $g(y_{k}) = g(z_{k})$. Thus, one can keep track of progress to save computations. 

On the other hand, the function value $g_k(z_{k+1})$ can be retrieved readily once the subproblem \eqref{eq:ALM-exact-set-min-main-text} is solved. In particular, as we will see in \eqref{eq:prox-gkj}, it holds that
\begin{align*}
     -\min_{x \in \Omega_{k}}  \calL_\rho(x,y_k) = \min_{y \in \RR^m} g_{k}(y) + \frac{1}{2\rho}\|y- y_k\|^2.
\end{align*}
Thus, once $w_{k+1} \in \min_{x \in \Omega_{k}}\calL_\rho(x,y_k)$ and $z_{k+1} = y_k + \rho (b- \Amap w_{k+1})$ are constructed, $g_k(z_{k+1})$ can be recovered as 
\begin{align*}
    g_k(z_{k+1}) = -\calL_\rho(w_{k+1},y_k) - \frac{1}{2\rho}\|z_{k+1} - y_k\|^2,
\end{align*}
which is computationally trivial.

We next discuss the complexity of the update of $\Omega_{k}$. Viewing \cref{assump:AALM}, we see that the only part that requires computation is to find a point $v_{k+1} \in \Omega_{k+1}$ such that $g(v_{k+1}) = - \calL(v_{k+1},z_{k+1})$. In other words, it requires finding a point $v_{k+1}$ that realizes the dual function value in the Lagrangian function. In general, this requires solving an optimization problem. However, in some cases, it can be done by computing the eigenvector of $\Ajmap z_{k+1} -c$. Specifically, consider the above three concrete examples of nonnegative orthant, second-order cone, and positive semidefinite cone. It can be shown that $v_{k+1}$ can be computed in the following ways: 
\begin{itemize}[leftmargin=*]
    \item (Nonnegative orthant) Let $p(y) = \max\{(\Ajmap y -c)_1,\ldots,(\Ajmap y -c)_n ,0 \}$. We have
    \begin{align*}
        v_{k+1} = \begin{cases}
            \mathbf{0},  & \text{if}~  p(z_{k+1}) = 0,\\
            a e_i,       & \text{if}~p(z_{k+1}) > 0, 
        \end{cases}
    \end{align*}
    where $\mathbf{0} \in \RR^n$ is a zero vector, and $e_i$ is the standard basis in $\RR^n$ such that $(\Ajmap z_{k+1} -c)_i = p(z_{k+1})$.
    \item (Second-order cone) Let $p(y) = \max\{ \| (\Ajmap y -c)_{1:n} \| - (\Ajmap y -c)_{n+1},0\}$. We have 
    \begin{align*}
        v_{k+1} = \begin{cases}
            \mathbf{0},  & \text{if}~  p(z_{k+1}) = 0,\\
            a v_+,       & \text{if}~p(z_{k+1}) > 0, 
        \end{cases}
    \end{align*}
    where $\mathbf{0} \in \RR^{n+1}$ is a zero vector, and $v_+$ is the eigenvector of $\Ajmap z_{k+1} - c$ corresponding to the eigenvalue $ \| (\Ajmap y -c)_{1:n} \| - (\Ajmap y -c)_{n+1}$ in the sense of the second-order cone. We refer readers to \cite{alizadeh2003second} for more details regarding the eigenvalue/eigenvector in the second-order cone.
    \item (Positive semidefinite cone) Let $p(x) =\max\{\lambda_{\max}(\Ajmap y - C),0\}$. We have 
    \begin{align*}
        v_{k+1} = \begin{cases}
            \mathbf{0},  & \text{if}~  p(z_{k+1}) = 0,\\
            a v_+ v_+^\tr,       & \text{if}~p(z_{k+1}) > 0, 
        \end{cases}
    \end{align*}
    where $\mathbf{0} \in \sn$ is a zero matrix, and $v_+ \in \RR^n$ is the eigenvector of $\Ajmap z_{k+1} - c$ such that $v_+^\tr (\Ajmap z_{k+1} - c) v_+ = \lambda_{\max}(\Ajmap z_{k+1} - c)$.
\end{itemize}

\section{Primal-dual Principles in \alg: Interplay with PBM and iALM}
\label{section:AALA-PBM}
In this section, we discuss the relationship among \alg{} (\Cref{alg:bundle-dual}), PBM (\Cref{alg:bundle}), and iALM (\Cref{alg:alm}). In \Cref{subsection:AALA-PBM}, we reveal 
 a deep primal and dual relationship between \alg{} in \Cref{alg:bundle-dual} and the PBM in \Cref{alg:bundle}, analogous to the connection between the exact ALM and PPM revealed in \cite{rockafellar1976augmented}. In \Cref{subsection:AALM-iALM}, we demonstrate that \alg~is also closely related to the iALM in \Cref{alg:alm}. 

\subsection{Primal-Dual interplay between \alg{} and PBM}
\label{subsection:AALA-PBM}
In this subsection, we show that running \alg{} is equivalent to applying PBM for the dual problem \eqref{eq:dual}. This is analogous to the connection in \cite{rockafellar1976augmented}: running an exact ALM on the primal is equivalent to executing an exact PPM on its dual. With this connection, many proofs in the main text (including \Cref{theorem:asymptotic,theorem:convergence-bundle-dual}) are simplified and can be adapted from the PBM counterparts (note that the proof of \cref{thm:average-iterate} relies on a recent result in the ALM framework \cite{xu2021iteration} and has not been established in the literature on PBM). This is also the same as a classical strategy for analyzing (exact or inexact) ALM since the seminal work \cite{rockafellar1976augmented}.

We first present an important lemma that connects the dual update in \alg{} with a proximal step. 

\begin{lemma} \label{eq:dual-bala-proximal}
    The dual update $z_{k+1}$ from \cref{eq:dual-update-ALM} in \alg{} is the same as the proximal step on the induced approximated dual function, i.e.,
    $$
    z_{k+1} = \min_{y \in \RR^m} g_{k}(y) + \frac{1}{2\rho}\|y- y_k\|^2,
    $$
    where $g_{k}$ is defined in \cref{eq:approximated-dual}.
\end{lemma}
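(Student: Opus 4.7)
The plan is to recognize the statement as an instance of the classical Rockafellar duality between the exact augmented Lagrangian method and the proximal point method, now specialized to the \emph{restricted} primal $\min\{\innerproduct{c}{x}:\Amap x=b,\,x\in\Omega_k\}$. Under this reading, $w_{k+1}$ from \cref{eq:ALM-exact-set-min-main-text} is exactly one exact ALM step for the restricted problem whose dual function (in the maximization convention) is $-g_k$, so the update $z_{k+1}=y_k+\rho(b-\Amap w_{k+1})$ should coincide with a proximal step on $g_k$. Rather than quoting \cite[Prop.~6]{rockafellar1976augmented}, I would give a short self-contained verification, since the manipulation is transparent and reusable elsewhere in the appendix.

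Concretely, I would first unfold the definition \cref{eq:approximated-dual} and write
$$\min_{y\in\RR^m}\Bigl\{g_k(y)+\tfrac{1}{2\rho}\|y-y_k\|^2\Bigr\}=\min_{y\in\RR^m}\max_{x\in\Omega_k}\Bigl\{-\innerproduct{c}{x}-\innerproduct{y}{b-\Amap x}+\tfrac{1}{2\rho}\|y-y_k\|^2\Bigr\}.$$
The bracketed expression is affine in $x$ on the compact convex set $\Omega_k$ and strongly convex in $y$, so Sion's minimax theorem permits swapping $\min_y$ and $\max_x$. The inner unconstrained minimization over $y$ has first-order condition $\rho^{-1}(y-y_k)=b-\Amap x$, yielding $y(x)=y_k+\rho(b-\Amap x)$. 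Substituting $y(x)$ back collapses the saddle integrand to $-\calL_\rho(x,y_k)$, so the outer problem reduces to $\max_{x\in\Omega_k}[-\calL_\rho(x,y_k)]$, whose maximizer is $w_{k+1}$ by the definition of \cref{eq:ALM-exact-set-min-main-text}. Reading off the corresponding dual value gives $y(w_{k+1})=y_k+\rho(b-\Amap w_{k+1})=z_{k+1}$, which is precisely the claimed proximal step.

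The only subtlety is justifying the min--max swap and the uniqueness of the proximal minimizer. Uniqueness is immediate from $\rho^{-1}$-strong convexity of the quadratic in $y$, which ensures the proximal mapping is single-valued and that the $z_{k+1}$ recovered above is that unique minimizer. For the swap, compactness of $\Omega_k$ together with affine dependence in $x$ and strong convexity in $y$ are exactly the hypotheses of Sion's theorem; no Slater condition on the restricted problem is required because the quadratic penalty regularizes the dual. As a byproduct of the same calculation, I also obtain the useful identity $-\min_{x\in\Omega_k}\calL_\rho(x,y_k)=\min_{y}g_k(y)+\tfrac{1}{2\rho}\|y-y_k\|^2$, which is precisely the relation exploited in \Cref{apx:subsection:compuation} for evaluating $g_k(z_{k+1})$ from information already computed when solving \cref{eq:ALM-exact-set-min-main-text}.
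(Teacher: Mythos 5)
Your proof is correct and follows essentially the same route as the paper's: both unfold $g_k$ as a maximum over the compact convex set $\Omega_k$, swap $\min_y$ and $\max_x$ via a minimax theorem (you cite Sion; the paper cites \cite[Corollary 37.3.2]{rockafellar1997convex}), collapse the inner $y$-minimization through its first-order condition to recover $-\calL_\rho(x,y_k)$, and read off $z_{k+1}=y_k+\rho(b-\Amap w_{k+1})$ as the proximal minimizer. The only difference is cosmetic---you run the chain of equalities from the proximal problem toward the ALM subproblem, while the paper runs it in the opposite direction---and your explicit remarks on uniqueness of the prox point and on the byproduct identity $-\min_{x\in\Omega_k}\calL_\rho(x,y_k)=\min_y g_k(y)+\tfrac{1}{2\rho}\|y-y_k\|^2$ match what the paper uses implicitly and in \Cref{apx:subsection:compuation}.
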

\begin{proof}
Our proof idea is based on primal and dual analysis and the KKT optimality condition. Consider the bundled-based augmented Lagrangian subproblem \eqref{eq:ALM-exact-set-min-main-text} (or equivalently \cref{eq:ALM-exact-set-min-app}) in \alg{} at the iteration $k$. We can reformulate this problem in the following form
\begin{subequations} \label{eq:prox-gkj}
\begin{align} -\min_{x \in \Omega_{k}}  \calL_\rho(x,y_k)   
=&   \max_{x \in \Omega_{k}}  -\calL_\rho(x,y_k)  \label{eq:prox-gkj-a}
     \\
     =& \max_{x \in \Omega_{k}} \min_{y \in \RR^m} -\calL(x,y)+\frac{1}{2\rho}\|y- y_k\|^2 \label{eq:prox-gkj-b} \\
     = & \min_{y \in \RR^m} \max_{x \in \Omega_{k}}   -\calL(x,y)+\frac{1}{2\rho}\|y- y_k\|^2 \label{eq:prox-gkj-c} \\
     = & \min_{y \in \RR^m} g_{k}(y) + \frac{1}{2\rho}\|y- y_k\|^2,  \label{eq:prox-gkj-d}
\end{align}
\end{subequations}
where \cref{eq:prox-gkj-a} is a simple sign switch from minimization to maximization, \cref{eq:prox-gkj-b} is from the definition of the augmented Lagrangian function $\calL_\rho(x,y_k)$ in  \cref{eq:AL}, 
\cref{eq:prox-gkj-c} swaps the minimization and maximization operation since the set $\Omega_{k}$ is compact and convex \cite[Corollary 37.3.2]{rockafellar1997convex}, and finally in the last equality \cref{eq:prox-gkj-d}, we have defined an induced approximated dual function (see \cref{eq:approximated-dual})
$$
g_{k}(y) =  \max_{x \in \Omega_{k}} -\calL(x,y) = - \min_{x \in \Omega_{k}} \calL(x,y), \qquad \forall y \in \mathbb{R}^n. 
$$
Once \eqref{eq:ALM-exact-set-min-app} is solved with a primal candidate $w_{k+1}$, we can compute the optimal solution to \eqref{eq:prox-gkj-d} by  
$y_k + \rho ( b- \Amap w_{k+1})$, which is the same as the construction of the dual candidate $z_{k+1}$ in \alg{} (see \cref{eq:dual-update-ALM}). This is the KKT optimality condition for the inner optimization, i.e., 
$
0 =\Amap x^\star - b +(y^\star- y_k)/\rho \, \Rightarrow \, y^\star = y_k + \rho ( b- \Amap x^\star).
$
This completes the proof.    
\end{proof}

In other words, the construction of the dual candidate $z_{k+1}$ from \cref{eq:dual-update-ALM} in \alg{} is equivalent to performing a proximal step on approximated dual function $g_{k}$ centered at $y_k$. We remark that the equivalence in \cref{eq:dual-bala-proximal} is not entirely surprising since when we have $\Omega_k = \Omega$, it recovers the finding in \cite{rockafellar1976augmented} that an exact ALM step on the primal problem is equivalent to an exact PPM step on its dual.

Another interesting connection is that the induced approximated dual function $g_{k}$ satisfies the requirements in \Cref{assump:bm} for PBM introduced in \cref{subsection:bundle}. We have the following result. 
\begin{theorem}
    \label{prop:AALM-PBM}
    Consider the inner approximation $\Omega_{k+1} \subset \Omega$ in \alg. Let $g_{k+1}(\cdot)\!= \!- \min_{x \in \Omega_{k+1}} \calL(x,\cdot)$ be an approximated dual function of $g$. Then, the fact that $\Omega_{k+1}$ satisfies \cref{assump:AALM} implies that the function $g_{k+1}$ satisfies \cref{assump:bm}.
\end{theorem}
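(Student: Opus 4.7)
The plan is to verify the three conditions of \Cref{assump:bm} for $g_{k+1}$ one by one, with $g$ playing the role of $f$ and the proximal parameter $\alpha$ playing the role of $\rho$. The unifying observation is that, by definition,
$$
g_{k+1}(y) \;=\; \max_{x \in \Omega_{k+1}} \bigl(-\calL(x,y)\bigr) \;=\; \max_{x \in \Omega_{k+1}}\bigl(-\innerproduct{c}{x} - \innerproduct{y}{b - \Amap x}\bigr),
$$
so $g_{k+1}$ is a pointwise supremum of affine functions in $y$, and for every fixed $x \in \Omega_{k+1}$ the map $y\mapsto -\calL(x,y)$ is an affine minorant of $g_{k+1}$ with slope $\Amap x - b$. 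By Danskin's theorem, any $x$ that attains the maximum at a point $\bar y$ gives $\Amap x - b \in \partial g_{k+1}(\bar y)$; the same statement applies to $g$ and $g_k$ relative to $\Omega$ and $\Omega_k$.

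For the \textbf{lower approximation} property I would invoke condition~2 of \Cref{assump:AALM}: since $\Omega_{k+1} \subseteq \Omega$, shrinking the feasible set can only increase the minimum of $\calL(\cdot,y)$, which after the sign flip gives $g_{k+1}(y) \le g(y)$ for all $y \in \RR^m$. For the \textbf{subgradient} property I would use condition~3 of \Cref{assump:AALM}: the point $v_{k+1}\in\Omega$ attains $g(z_{k+1}) = -\calL(v_{k+1}, z_{k+1})$, so Danskin gives $\Amap v_{k+1} - b \in \partial g(z_{k+1})$, which is the candidate subgradient required by \Cref{assump:bm}(2). Because $v_{k+1}\in\Omega_{k+1}$, the affine map $y\mapsto -\calL(v_{k+1},y)$ is a minorant of $g_{k+1}$; a direct expansion, adding and subtracting $\innerproduct{z_{k+1}}{\Amap v_{k+1} - b}$, rewrites it as $g(z_{k+1}) + \innerproduct{\Amap v_{k+1}-b}{y-z_{k+1}}$, which is exactly the desired inequality.

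For the \textbf{aggregation} property at a null step I would use condition~4 of \Cref{assump:AALM} together with the saddle-point interpretation of the subproblem. The key identification, which I regard as the main obstacle, is to show that $g_k(z_{k+1}) = -\calL(w_{k+1},z_{k+1})$ and $s_{k+1} = \tfrac{1}{\rho}(y_k - z_{k+1}) = \Amap w_{k+1} - b$. The second identity is immediate from the dual update \cref{eq:dual-update-ALM}. The first one I would extract from the min--max swap already carried out in the proof of \Cref{eq:dual-bala-proximal}: that argument shows $(w_{k+1},z_{k+1})$ is a saddle point of $(x,y)\mapsto -\calL(x,y) + \tfrac{1}{2\rho}\|y-y_k\|^2$ on $\Omega_k \times \RR^m$, so $w_{k+1}\in\argmax_{x\in\Omega_k}\bigl(-\calL(x,z_{k+1})\bigr)$, i.e., $g_k(z_{k+1}) = -\calL(w_{k+1},z_{k+1})$, and by Danskin $s_{k+1} = \Amap w_{k+1} - b \in \partial g_k(z_{k+1})$. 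Once these identifications are in place, the argument mirrors step~(2): the null-step requirement $w_{k+1}\in\Omega_{k+1}$ ensures $y\mapsto -\calL(w_{k+1},y)$ is a minorant of $g_{k+1}$, and rewriting it around $z_{k+1}$ yields $g_{k+1}(y) \ge g_k(z_{k+1}) + \innerproduct{s_{k+1}}{y-z_{k+1}}$. Beyond this bookkeeping, no deeper machinery is needed; every inequality reduces to the affine-minorant viewpoint combined with one of the four items of \Cref{assump:AALM}.
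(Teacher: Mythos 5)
Your proposal is correct and follows essentially the same route as the paper: both verify the three conditions of \Cref{assump:bm} one by one, using the inclusion $\Omega_{k+1}\subseteq\Omega$ for the lower bound, the feasibility of $v_{k+1}$ together with $g(z_{k+1})=-\calL(v_{k+1},z_{k+1})$ for the subgradient condition, and the null-step requirement $w_{k+1}\in\Omega_{k+1}$ combined with the saddle-point identity $g_k(z_{k+1})=-\calL(w_{k+1},z_{k+1})$ (extracted from the min--max swap behind \Cref{eq:dual-bala-proximal}) for aggregation. The only cosmetic differences are that you cite Danskin's theorem where the paper uses its own \Cref{lemma:dual-subgradient}, and you obtain $s_{k+1}\in\partial g_k(z_{k+1})$ via the maximizer $w_{k+1}$ rather than via the proximal optimality condition for $z_{k+1}$; both justifications are valid and equivalent here.
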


\begin{proof}
     Recall that the points $w_{k+1}$ and $z_{k+1}$ correspond to the primal and dual candidates in \alg{}; see the updates in \cref{eq:ALM-exact-set-min-main-text,eq:dual-update-ALM}. 
     We verify that the requirements in \cref{assump:AALM} imply those in \cref{assump:bm} one by one.

    \begin{itemize}
        \item 
    Firstly, as $\Omega_{k+1} \subseteq \Omega$, it is immediate to see that
        \begin{equation} \label{eq:assumption-1-PBM}
            \begin{aligned}
                g(y) = -\min_{x\in \Omega} \calL(x,y)  \geq -\min_{x\in \Omega_{k+1}} \calL(x,y) = g_{k+1}(y), \qquad \forall y \in \RR^m.    
            \end{aligned}
        \end{equation}
    This indicates that the induced approximation $g_{k+1}$ serves a global lower approximation of the original dual $g$. This also highlights the principle that imposing a restriction on the primal problem results in a relaxation on its dual. 
    \item Secondly, we have that 
        \begin{subequations} \label{eq:assumption2-PBM}
            \begin{align}
            g_{k+1}(y)  = -\min_{x \in \Omega_{k+1} } \calL(x,y) 
             & \geq -\calL(v_{k+1},y) \label{eq:assumption2-a} \\
            &  = -\calL(v_{k+1},z_{k+1}) + \innerproduct{z_{k+1} - y}{b-\Amap v_{k+1} } \label{eq:assumption2-b} \\
            & = g(z_{k+1})+\innerproduct{z_{k+1} - y}{b-\Amap v_{k+1}}, \label{eq:assumption2-c} \\
            & = g(z_{k+1})+\innerproduct{\Amap v_{k+1} - b}{y - z_{k+1}}, \forall y \in \RR^m.  \label{eq:assumption2-d}
            \end{align}
        \end{subequations}
        In \cref{eq:assumption2-a}, we use the fact that $v_{k+1} \in \Omega_{k+1}$ is a feasible solution (recall that $\Omega_{k+1}$ contains $v_{k+1}$ by the third requirement in \Cref{assump:AALM}). The equality in  \cref{eq:assumption2-b} applies the fact that the Lagrangian $\mathcal{L}(x,y)$ is linear in $y$, and the equality in \cref{eq:assumption2-c} uses $\calL(v_{k+1},z_{k+1}) = - g(z_{k+1})$ by the third requirement in \Cref{assump:AALM}. 
        
        Recall that we further have $\Amap v_{k+1} - b \in \partial g(z_{k+1})$; see \Cref{lemma:dual-subgradient}. Thus, \cref{eq:assumption2-PBM} indicates that $g_{k+1}$ includes the subgradient lowerbound of $g$ at $z_{k+1}$. 

    \item Lastly, we have the following inequality   
        \begin{subequations} \label{eq:assumptionc-PBM}
            \begin{align}
                  g_{k+1}(y) 
                 = -\min_{x \in \Omega_{k+1}} \calL(x,y) 
                 & \geq -\calL(w_{k+1},y) \label{eq:assumptionc-a} \\
                & = g_{k}(z_{k+1}) - g_{k}(z_{k+1}) -  \innerproduct{c}{w_{k+1}} - \innerproduct{y}{b-\Amap w_{k+1} } \label{eq:assumptionc-b}\\
                & = g_{k}(z_{k+1}) +  \calL(w_{k+1},z_{k+1}) - \innerproduct{c}{w_{k+1}} - \innerproduct{y}{b-\Amap w_{k+1}} \label{eq:assumptionc-c}\\
                & = g_{k}(z_{k+1}) +\innerproduct{z_{k+1}}{b-\Amap w_{k+1}} - \innerproduct{y}{b-\Amap w_{k+1}} \label{eq:assumptionc-d} \\
                & = g_{k}(z_{k+1}) + \innerproduct{z_{k+1}- y}{b-\Amap w_{k+1}} \label{eq:assumptionc-e} \\
                & = g_{k}(z_{k+1}) + \innerproduct{y - z_{k+1}}{(y_k - z_{k+1})/\rho}, \forall y \in \RR^m. \label{eq:assumptionc-f}
            \end{align}
        \end{subequations}
        In \cref{eq:assumptionc-a}, we have used the fact that $w_{k+1}$ is a feasible point in $\Omega_{k+1}$ (recall that $\Omega_{k+1}$ contains $w_{k+1}$ by the fourth requirement in \Cref{assump:AALM} when step $k$ is a null step). The equality in \cref{eq:assumptionc-b} is a simple reformulation, and \cref{eq:assumptionc-c} is by the definition of $g_k$ and the fact $ w_{k+1} \in \argmin_{x\in \Omega_k} \calL(x,y_k)$ since \cref{eq:prox-gkj-c}. The equality in \cref{eq:assumptionc-d} is due to the simple calculation 
        $$
        \calL(w_{k+1},z_{k+1}) - \innerproduct{c}{w_{k+1}} = \innerproduct{z_{k+1}}{b-\Amap w_{k+1}}. 
        $$
        In \cref{eq:assumptionc-f}, we use the update that 
        $
        z_{k+1} - y_k = \rho(b - \Amap w_{k+1}).
        $ 
        Finally, we note that $(y_k - z_{k+1})/\rho \in \partial g_{k}(z_{k+1})$ is the optimality condition for the problem
        $$z_{k+1} = \argmin_{y \in \RR^m} g_{k}(y) + \frac{1}{2\rho} \|y - y_k\|^2.$$
        This establishes that $g_{k+1}$ aggregates the subgradient of $g_{k}$ at $z_{k+1}$. 
    \end{itemize}

To summarize, the three inequalities \cref{eq:assumption-1-PBM,eq:assumptionc-PBM,eq:assumption2-PBM} confirm the three requirements in \Cref{assump:bm}, respectively. 
\end{proof}

\Cref{prop:AALM-PBM} confirms that the function $g_{k}$ is a valid lower approximation for the dual function $g$ in PBM. Thus, with the view of \eqref{eq:prox-gkj}, we see that the dual candidate $z_{k+1}$ in \alg~is the same as the solution of a proximal subproblem with the approximation function $g_{k}$ in PBM for \eqref{eq:dual}. 
Finally, by the definition of $g_k$ \eqref{eq:approximated-dual}, we observe that the testing criterion \eqref{eq:test} in \alg~is the same as that in PMB for \eqref{eq:dual}. Thus, we conclude that \alg~is equivalent to PBM for \eqref{eq:dual} with the sequence of lower approximation functions $\{g_k\}$ induced by the sequence of inner approximation sets $\{\Omega_k\}$ from \alg. 

\vspace{2mm}

\begin{remark}[Insights beyond PBM]
    While \Cref{eq:dual-bala-proximal,prop:AALM-PBM} present an exact primal and dual relation between our algorithm \alg{} and the well-established PBM,  we believe that our bundle-based Augmented Lagrangian framework provides complementary insights and extends beyond the proximal bundle framework. First, as we will demonstrate in \Cref{subsection:AALM-iALM},  our algorithm also has a close relationship with the inexact ALM framework. Then, all classical developments in inexact ALM may be adapted into our algorithmic framework. Second, our algorithm \alg{} essentially uses a line segment between $v_{k+1}$ (representing the current dual information) and $w_{k+1}$ (aggregating the past primal information) to guide our iterative progress. This perspective may suggest potential links to other first-order methods, such as momentum-based algorithms \cite{nesterov2013gradient} and Frank-Wolfe-type algorithms \cite{frank1956algorithm,jaggi2013revisiting}, though establishing an exact connection requires further investigation and is left for future work. Finally, our bundle-based Augmented Lagrangian framework also complements the viewpoints of interior-point and penalty methods \cite[Chapter 11]{boyd2004convex} in addressing $\Omega$. Our idea is to approximate the complicated convex set $\Omega$ using a line segment while interior-point and penalty methods use a suitable barrier or penalty function. \hfill $\square$
\end{remark}

\subsection{\alg{} in the iALM framework}
\label{subsection:AALM-iALM}
As highlighted in \cref{section:BALA}, \alg~is designed in the general framework of ALM. It also shares a similar algorithmic structure with the iALM in \cref{alg:alm}. We here establish a close relation between \alg{} and \cref{alg:alm}. This relationship allows us to utilize a recent result in \cite{xu2021iteration} to prove the improved sublinear rate in \cref{thm:average-iterate}. 

Our algorithm \alg{} features a single-loop process, unlike the explicit nested structure in the classical iALM. Still, the iterates $x_{k+1}$ and $y_{k+1}$ in \alg{} are updated only for the descent step, i.e., the descent test \cref{eq:test} is satisfied. In contrast, the iterates and the objective function remain the same between two consecutive descent steps, i.e.,  when \cref{eq:test} is not satisfied. Thus, we expect that the satisfaction of \cref{eq:test} means that the updates  $x_{k+1}$ and $y_{k+1}$ provide meaningful progress. Indeed, as shown in \cref{prop:connection}, each update of $x_{k+1}$ and $y_{k+1}$ implies that $x_{k+1}$ is an inexact solution to the original subproblem \eqref{eq:ALM-exact-min} with the inexactness quantified by $y_{k+1}$. For convenience, we re-iterate \cref{prop:connection} in the following result. 
\begin{lemma} \label{lemma:BALA-ALM}
    If step $k$ of \alg{} satisfies the criterion \cref{eq:test}, then  $x_{k+1}$ is an $\epsilon_{k+1}$-suboptimal solution to $\min_{x \in \Omega} \calL_{\rho}(x,y_k)$ in \cref{eq:ALM-inexact-min}, i.e., we have
    \begin{equation*}
        \calL_{\rho}(x_{k+1},y_k) - \min_{x \in \Omega} \calL_{\rho}(x,y_k)\leq \epsilon_{k+1},
    \end{equation*}
    with the suboptimality measure as 
    $
    \epsilon_{k+1} = (g(y_k) - g(y_{k+1}))/\beta. 
    $
\end{lemma}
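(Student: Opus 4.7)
The plan is to observe that \cref{lemma:BALA-ALM} is essentially a restatement of \cref{prop:connection}, and to prove it by chaining two primal--dual sandwich bounds with the descent criterion \eqref{eq:test}. Throughout, I use that, since step $k$ is a descent step, $x_{k+1}=w_{k+1}$ is the \emph{exact} minimizer of $\calL_\rho(\cdot,y_k)$ over $\Omega_k$ (by construction in \cref{eq:ALM-exact-set-min-main-text}) and $y_{k+1}=z_{k+1}$.

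First I would establish the upper bound
\begin{equation*}
\calL_\rho(x_{k+1},y_k)\;\le\;-g_k(z_{k+1}),
\end{equation*}
which follows directly from the primal--dual equivalence \eqref{eq:prox-gkj}: the value $-\min_{x\in\Omega_k}\calL_\rho(x,y_k)$ equals the optimum of the proximal subproblem $\min_y g_k(y)+\tfrac{1}{2\rho}\|y-y_k\|^2$, namely $g_k(z_{k+1})+\tfrac{1}{2\rho}\|z_{k+1}-y_k\|^2 \ge g_k(z_{k+1})$. Next, since $\calL_\rho(x,y_k)\ge\calL(x,y_k)$ pointwise (the augmenting term is nonnegative), minimizing over $\Omega$ and invoking the definition of the dual function gives the lower bound
\begin{equation*}
\min_{x\in\Omega}\calL_\rho(x,y_k)\;\ge\;\min_{x\in\Omega}\calL(x,y_k)\;=\;-g(y_k).
\end{equation*}

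Subtracting the second inequality from the first produces
\begin{equation*}
\calL_\rho(x_{k+1},y_k)-\min_{x\in\Omega}\calL_\rho(x,y_k)\;\le\;g(y_k)-g_k(z_{k+1}).
\end{equation*}
To conclude, I would rearrange the descent test \eqref{eq:test} as $g(y_k)-g_k(z_{k+1})\le(g(y_k)-g(z_{k+1}))/\beta$ and use $y_{k+1}=z_{k+1}$ to rewrite the right-hand side as $(g(y_k)-g(y_{k+1}))/\beta$; chaining with the previous display finishes the proof. The derivation is only a three-line manipulation, so there is no genuine technical obstacle. The one place that requires care is recognizing that \eqref{eq:prox-gkj} already does the heavy lifting: it converts the bundle subproblem into a proximal step on the induced dual approximation $g_k$, which is what delivers the clean inequality $\calL_\rho(x_{k+1},y_k)\le -g_k(z_{k+1})$ without ever having to identify the minimizer of $\min_{x\in\Omega}\calL_\rho(x,y_k)$ explicitly.
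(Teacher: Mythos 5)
Your proposal is correct and follows essentially the same route as the paper's own proof of \cref{prop:connection} (of which \cref{lemma:BALA-ALM} is a restatement): both arguments combine the lower bound $\min_{x\in\Omega}\calL_\rho(x,y_k)\ge -g(y_k)$ from $\calL_\rho\ge\calL$, the proximal identity \eqref{eq:prox-gkj} to evaluate $\calL_\rho(w_{k+1},y_k)$ as $-g_k(z_{k+1})-\tfrac{1}{2\rho}\|z_{k+1}-y_k\|^2$ (dropping the nonnegative quadratic term), and the descent test \eqref{eq:test} with $y_{k+1}=z_{k+1}$. The only difference is cosmetic ordering of these three ingredients, so nothing further is needed.
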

\Cref{lemma:BALA-ALM} is just a restatement of \Cref{prop:connection}. Thus, the proof is the same as that in \Cref{prop:connection}. 
Both \alg~and iALM employ the same gradient ascent strategy for the dual update. In this sense, it is clear that our \alg{} in~\cref{alg:bundle-dual} is closely related to the iALM in \cref{alg:alm}. 
 Moreover, we can interpret that the goal of the null steps in \alg~is to search for an acceptable primal point $x_{k+1}$, which is similar to the spirit of \eqref{eq:ALM-inexact-min} in iALM. Instead of applying another iterative algorithm to solve \eqref{eq:ALM-inexact-min} aiming to satisfy \eqref{eq:subproblem-error}, \alg{} solves the sequence of problems of the form \eqref{eq:ALM-exact-set-min-main-text} with the sequence of inner approximations $\{\Omega_{k}\}$ to satisfy \eqref{eq:test}. Note that the sequence of problems of the form \eqref{eq:ALM-exact-set-min-main-text} is guided by an approximated dual function (see \Cref{eq:dual-bala-proximal}). As guaranteed by \cref{lemma:null-steps}, the number of total null steps is upper bounded by $\mathcal{O}(\epsilon^2)$. Our discussions in \Cref{appendix:computations-of-BALA} have shown that all the computations in \alg{} can be extremely efficient. With the interpretation in \cref{lemma:BALA-ALM}, many recent developments in the iALM framework (such as \cite{xu2021iteration}) might be adapted to our \alg{} algorithm. We will use this insight to prove \cref{theorem:asymptotic,thm:average-iterate}.

\section{Connection to (sub)gradient methods} \label{appendix:subgradient-methods}

This section discusses some connections between \alg{} and the dual (sub)gradient method. To start with, we review a classical result in convex optimization.
\begin{lemma}
    \label{lemma:dual-subgradient}
    Consider the function $g$ in \eqref{eq:dual}, and let $y \in \RR^m$. If $x^\star$ solves $ \min_{x \in \Omega}  \calL(x,y)$, then $\Amap x^\star -  b$ is a subgradient of $g$ at $y$, i.e., $ \Amap x^\star -  b \in \partial g(y)$.
\end{lemma}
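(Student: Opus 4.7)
The plan is to verify the subgradient inequality directly from the definition, using that $g$ is the pointwise maximum of a family of affine functions of $y$. Recall that $g(y) = -\theta(y) = -\min_{x \in \Omega} \calL(x,y) = \max_{x \in \Omega} \bigl(-\innerproduct{c}{x} - \innerproduct{y}{b - \Amap x}\bigr)$, so for each fixed $x \in \Omega$ the map $y \mapsto -\calL(x,y)$ is affine in $y$ with slope $\Amap x - b$. The candidate subgradient $\Amap x^\star - b$ therefore arises naturally as the $y$-gradient of one particular affine piece evaluated at $y$.

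The key steps, in order, would be: first, use that $x^\star$ solves $\min_{x \in \Omega} \calL(x,y)$ to write $g(y) = -\calL(x^\star, y)$ as an exact identity, not just an inequality. Second, for an arbitrary test point $y' \in \RR^m$, lower bound $g(y')$ by plugging the (possibly suboptimal for $y'$) point $x^\star$ into the pointwise-max definition: $g(y') \geq -\calL(x^\star, y')$. Third, subtract the two and use linearity of $\calL(x^\star, \cdot)$ in its second argument to collapse the difference:
\begin{equation*}
g(y') - g(y) \;\geq\; -\calL(x^\star, y') + \calL(x^\star, y) \;=\; \innerproduct{y - y'}{b - \Amap x^\star} \;=\; \innerproduct{\Amap x^\star - b}{y' - y}.
\end{equation*}
This is exactly the subgradient inequality, so $\Amap x^\star - b \in \partial g(y)$.

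There is no real obstacle here; this is a one-line application of the standard fact that for a pointwise supremum of affine functions, any maximizer yields a subgradient (a special case of Danskin's theorem for the convex conjugate). The only thing worth flagging is a sign-convention sanity check: the paper writes the dual as a minimization of $g = -\theta$, so I would double-check that the subgradient direction $\Amap x^\star - b$ (rather than $b - \Amap x^\star$) matches this convention, which it does, since $-\calL(x,\cdot)$ has $y$-gradient $\Amap x - b$. Compactness of $\Omega$ is not needed for this implication itself; it is only used to guarantee that such an $x^\star$ exists in the first place.
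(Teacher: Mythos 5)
Your proposal is correct and follows essentially the same route as the paper's proof: plug the minimizer $x^\star$ (suboptimal for the test point) into the definition of $g$ to get the lower bound $g(z) \geq -\calL(x^\star,z)$, use $g(y) = -\calL(x^\star,y)$ and linearity of $\calL(x^\star,\cdot)$ in $y$, and read off the subgradient inequality. Your framing via the pointwise maximum of affine functions (Danskin's theorem) is just a more abstract packaging of the identical computation, and your remark that compactness of $\Omega$ is only needed for existence of $x^\star$ is accurate.
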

\begin{proof}
    Let $z\in \RR^m$. It holds that 
    \begin{equation*}
        \begin{aligned}
            g(z)  = -\min_{x\in \Omega} \calL(x,z) \geq - \calL(x^\star,z)         & = -(\innerproduct{c}{x^\star}+\innerproduct{z}{b-\Amap x^\star}) \\
            & = -\innerproduct{c}{x^\star} + \innerproduct{z}{\Amap x^\star-b} \\
            & = -\innerproduct{c}{x^\star} + \innerproduct{y}{\Amap x^\star-b}  +\innerproduct{z - y}{\Amap x^\star -b} \\
            & = -(\innerproduct{c}{x^\star} + \innerproduct{y}{b-\Amap x^\star})  +\innerproduct{z - y}{\Amap x^\star-b} \\
            & = g(y) + \innerproduct{z - y}{\Amap x^\star-b},
        \end{aligned}
    \end{equation*}
    where the first inequality plugs in a feasible point $x^\star$, the fourth equality adds and subtracts the term $\innerproduct{y}{\Amap \xstar - b}$, and the last equality uses the fact that $\xstar$ solves $\min_{x\in \Omega} \calL(x,y)$.
    Since $z \in \RR^m$ is arbitrary, the above confirms that $\Amap \xstar - b $ is a subgradient of $g$ at $y$.
\end{proof}

Given a $y \in \RR^m$, \cref{lemma:dual-subgradient} confirms that if we can find a point $x^\star \in \RR^n$ such that $  g(y) = -\calL(x^\star,y) = -\min_{x\in \Omega}\calL(x,y)$ (i.e., $x^\star$ archives the dual function value in the Lagrangian function at $y$), then $\Amap x^\star -  b$ is one valid subgradient of the dual function $g$. This result is standard since the Lagrangian function $\mathcal{L}(x,y)$ is linear in $y$ \cite[Chapter 4]{ruszczynski2011nonlinear}.
Recall that we have used \cref{lemma:dual-subgradient} in the proof of \cref{prop:AALM-PBM}, especially in~\cref{eq:assumption2-PBM}.  

We have the following relationship between \alg{} and the dual subgradient method. 

\begin{lemma} \label{lemma:BALA-subgradient-method}
    If the iteration $k$ in \alg{} is a decent step, then the next dual candidate point $z_{k+1}$ in \cref{eq:dual-update-ALM} can be chosen as a standard dual subgradient update, i.e., 
    $$
    z_{k+1} = y_k - \rho h_k,
    $$
    where $\rho >0$ is the Augmented Lagrangian constant, and $h_k \in \partial g(y_k)$ is a dual subgradient at $y_k$.
\end{lemma}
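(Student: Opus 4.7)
The claim essentially asks us to re-interpret the gradient-ascent dual update of \alg{} as an honest dual subgradient step on $g$ at the current iterate $y_k$. Since \eqref{eq:dual-update-ALM} directly gives $z_{k+1}=y_k+\rho(b-\Amap w_{k+1})$, the natural choice is to set $h_k:=\Amap w_{k+1}-b$, which immediately yields the algebraic form $z_{k+1}=y_k-\rho h_k$. All of the content of the lemma therefore reduces to verifying the membership $h_k\in\partial g(y_k)$.

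My plan is to invoke \Cref{lemma:dual-subgradient} applied to the Lagrangian itself. Because $w_{k+1}\in\Omega_k\subseteq\Omega$, the affine map $y\mapsto -\calL(w_{k+1},y)$ is a global affine minorant of the dual function $g$ whose slope is exactly $h_k=\Amap w_{k+1}-b$. Expanding this minorant around $y_k$ and combining with the trivial bound $g(y)\geq -\calL(w_{k+1},y)$ (from $w_{k+1}\in\Omega$) produces the ``half'' subgradient inequality
\begin{equation*}
g(y)\;\geq\; -\calL(w_{k+1},y_k)+\innerproduct{h_k}{y-y_k},\qquad \forall y\in\RR^m.
\end{equation*}
To upgrade this into the honest subgradient inequality $g(y)\geq g(y_k)+\innerproduct{h_k}{y-y_k}$, one needs the tightness identity $-\calL(w_{k+1},y_k)=g(y_k)$, i.e.\ $w_{k+1}$ must be a minimizer of $\calL(\cdot,y_k)$ over the whole of $\Omega$, not merely over $\Omega_k$.

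The main obstacle is establishing this tightness under the descent-step hypothesis. The plan is to combine the construction rules in \Cref{assump:AALM} with the proximal reformulation in \Cref{eq:dual-bala-proximal}. Evaluating the descent test \eqref{eq:test} requires access to $g(y_k)$, whose computation naturally produces a witness $v^\star\in\argmin_{x\in\Omega}\calL(x,y_k)$, and \Cref{assump:AALM} (dual-information clause together with the aggregation of past primal progress through the null steps preceding the present descent step) has arranged $\Omega_k$ to carry such a witness. One may therefore select $w_{k+1}$ from the solution set of the bundle subproblem so that it simultaneously realizes $g(y_k)=-\calL(w_{k+1},y_k)$. With that tightness in hand, the half inequality above upgrades to the full subgradient inequality, and the lemma follows. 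The delicate part is the bookkeeping that links the dual-information requirement in \Cref{assump:AALM} to a valid selection from $\argmin_{x\in\Omega_k}\calL_\rho(x,y_k)$ exactly when the descent test fires; I would handle this by exploiting the primal-dual optimality conditions of the bundle subproblem via \Cref{eq:dual-bala-proximal}, which identifies $w_{k+1}$ as a maximizer of $-\calL(\cdot,z_{k+1})$ over $\Omega_k$, and then tracing these optimality conditions back to $y_k$.
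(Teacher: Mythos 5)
Your reduction is on target: setting $h_k=\Amap w_{k+1}-b$ and invoking \Cref{lemma:dual-subgradient} makes the whole lemma equivalent to the tightness identity $-\calL(w_{k+1},y_k)=g(y_k)$, and the paper's proof passes through exactly this identity. The gap is in how you propose to obtain it. First, the witness that \Cref{assump:AALM} guarantees to lie in $\Omega_k$ is a minimizer of $\calL(\cdot,z_k)$ over $\Omega$, i.e.\ a witness at $z_k$, not at $y_k$; the two coincide only when the step that produced $z_k$ was itself a descent step, so that $y_k=z_k$ (this is how the paper reads the hypothesis). Your bookkeeping that appeals to ``null steps preceding the present descent step'' is precisely the situation in which $y_k\neq z_k$ and $\Omega_k$ is not guaranteed to carry any witness at $y_k$, so tightness cannot be extracted from \Cref{assump:AALM} there. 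Second, and more fundamentally, even when a witness $v_k\in\argmin_{x\in\Omega}\calL(x,y_k)$ does lie in $\Omega_k$, your claim that one may ``select $w_{k+1}$ from the solution set of the bundle subproblem'' so that it realizes $g(y_k)$ is unjustified and generally false: the subproblem \cref{eq:ALM-exact-set-min-main-text} minimizes the \emph{augmented} Lagrangian $\calL_\rho(\cdot,y_k)$, and the quadratic penalty $\frac{\rho}{2}\|b-\Amap x\|^2$ pulls its minimizer away from $v_k$ whenever $\Omega_k$ is larger than a point (for the line-segment choice $\Omega_k=\mathrm{conv}(v_k,w_k)$ the minimizer is typically interior to the segment), in which case $-\calL(w_{k+1},y_k)<g(y_k)$ strictly and $h_k\notin\partial g(y_k)$. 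Likewise, the optimality condition from \Cref{eq:dual-bala-proximal} only identifies $\Amap w_{k+1}-b$ as a subgradient of the \emph{model} $g_k$ at $z_{k+1}$; there is no mechanism for ``tracing this back'' to a subgradient of $g$ at $y_k$.

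The missing idea---and the entire content of the paper's short proof---is that the freedom advertised by ``can be chosen'' sits in the construction of $\Omega_k$ itself, not in a selection from the subproblem's solution set. Because the preceding step was a descent step, the primal-information clause of \Cref{assump:AALM} is vacuous, so it is legitimate to collapse the bundle to the singleton $\Omega_k=\{v_k\}$, where $v_k$ is the dual-information point satisfying $g(z_k)=-\calL(v_k,z_k)$ and $y_k=z_k$. On a singleton the augmented term is irrelevant, so $w_{k+1}=v_k$ trivially; tightness then holds by construction, \Cref{lemma:dual-subgradient} gives $\Amap v_k-b\in\partial g(y_k)$, and \cref{eq:dual-update-ALM} becomes exactly the claimed subgradient step. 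Without this singleton choice your argument does not close.
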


\begin{proof}
     
If iteration $k$ is a decent step, we have $y_{k} = z_{k}$ and the set $\Omega_{k}$ can be constructed as a singleton $\Omega_{k} = \{v_{k}\}$ where $v_{k} \in \Omega$ satisfies $g(z_{k}) = - \calL(v_{k},z_{k})$ (see \cref{subsection:bundle-inner-approximation}). In other words, the $v_{k}$ is the point that realizes the dual function value in the Lagrangian function. As $\Omega_{k}$ contains only one element $v_{k}$, the next primal candidate solution $w_{k+1}$ obtained via
\begin{align*}
    w_{k+1} = \argmin_{x \in \Omega_k} \;\calL(x,y_k)
\end{align*}
is the same as $w_{k+1} = v_{k}$. From the construction of $v_k$ and \cref{lemma:dual-subgradient}, we see that $\Amap v_k -  b$ is a subgradient of $g$ at $y_k$. Thus, the construction of the dual candidate $$z_{k+1} = y_k + \rho (b- \Amap {v_k}) = y_k -  \rho ( \Amap {v_k} - b)$$ is just a subgradient step of $g$ at $y_k$.
\end{proof}

The key insight in the proof of \cref{lemma:BALA-subgradient-method} is that our \Cref{assump:AALM} allows the next inner approximation $\Omega_{k+1}$ only to contain one point from the dual side if the iteration $k$ is a descent step. Thus, no computation is needed to update the primal candidate $w_{k+1}$. Then, the dual candidate $z_{k+1}$ is the same as a dual subgradient step. 

Another perspective is to look at the approximated dual function $g_k$ which is defined as 
\begin{align*}
    g_k(y) = - \min_{x \in \Omega_{k}} \calL(x,y) & = - \calL(v_{k},y) \\
    & =  -\calL(v_{k},y_{k}) + \innerproduct{y_{k} - y}{b-\Amap v_{k} } \\
            & = g(y_{k})+\innerproduct{y_{k} - y}{b-\Amap v_{k}}, \\
            & = g(y_{k})+\innerproduct{\Amap v_{k} - b }{y - y_{k}}.
\end{align*} 
Since $\Amap v_{k} - b \in \partial g(y_k)$, we see that $g_k$ is the first-order linearization of $g$ at the point $y_k$. From the proximal mapping interpretation in \eqref{eq:prox-gkj}, it becomes clear that the dual candidate $z_{k+1}$ is simply obtained by a subgradient step.

\begin{remark}[Improved performance compared to naive subgradient methods] While \cref{lemma:BALA-subgradient-method} highlights a close relationship between our algorithm \alg{} and the standard dual subgradient method, \alg{} enjoys significantly improved convergence behaviors.  In particular, the standard dual subgradient method would accept $z_{k+1}$ as the next iterate, whereas \alg{} updates $z_{k+1}$ only if the reduction in the objective value meets at least a fraction of the predicted decrease; see \cref{eq:test}. Additionally, for null steps, \alg{} will also 
aggregate information from past iterations into the next inner approximation. This seemingly subtle refinement enables us to establish both asymptotic and sublinear convergence results in  \Cref{thm:average-iterate,theorem:asymptotic,theorem:convergence-bundle-dual} for any augmented Lagrangian constant $\rho$. However, subgradient methods typically require a carefully controlled diminishing step size sequence to guarantee convergence \cite[Theorem 7.4.]{ruszczynski2011nonlinear}; an improperly chosen step size may result in slow progress, oscillatory behavior, or, in some cases, failure to converge.  \hfill $\square$
\end{remark}

\section{Missing proofs in \cref{section:convergence}}
\label{apx:section:missing-proof}
In this section, we complete the missing proofs in convergence guarantees of \alg{} (\cref{section:convergence}).

\subsection{Proof of \cref{prop:connection}}
\label{apx:prop:connection}
Since $ \calL_{\rho}(x,y_k) \geq \calL(x,y_k)$ for all $x \in \RR^n$, it holds that
\begin{align*}
    \min_{x\in \Omega} \calL_{\rho}(x,y_k) & \geq \min_{x \in \Omega} \calL(x,y_k) = -g(y_k),
\end{align*}
where the last equality comes from the definition of $g$. Using the above inequality, we can quantify the progress in the primal space made by $w_{k+1}$ as
\begin{equation*}
    \begin{aligned}
         \calL_{\rho}(w_{k+1},y_k) - \min_{x \in \Omega} \calL_{\rho}(x,y_k) 
         &\leq  \calL_{\rho}(w_{k+1},y_k) + g(y_k)\\
         &= g(y_k) -g_k(z_{k+1}) - \frac{1}{2\rho }\|z_{k+1} - y_k\|^2,
    \end{aligned}
\end{equation*}
where the equality uses \cref{eq:prox-gkj}. If, further, the step $k$ is a descent step (i.e., \eqref{eq:test} holds), then $x_{k+1} = w_{k+1}$ the inexactness can be further estimated by the descent in the dual space as
\begin{align*}
    \calL_{\rho}(x_{k+1},y_k) - \min_{x \in \Omega} \calL_{\rho}(x,y_k)&\leq  g(y_k)- g_k(z_{k+1}) \\
    &\leq   \frac{1}{\beta} (g(y_k) - g(z_{k+1})).
\end{align*}
\subsection{Proof of \cref{lemma:bundle-dual-consecutive}}
\label{apx:lemma:bundle-dual-consecutive}
The proof is simple and follows from a few steps. Specifically, it holds that 
    \begin{equation*}
\begin{aligned}
     \frac{1}{2 \rho} \|z_{k+1} - y_k\|^2 & \overset{(a)}{\leq}  g_k(y_{k})  - g_k(z_{k+1}) \\ 
    &\overset{(b)}{\leq} g(y_{k})  - g_k(z_{k+1})  \\
    & \overset{(c)}{=} \frac{1}{\beta} (g(y_{k})-g(z_{k+1})) \\
    & \overset{(d)}{\leq} \frac{1}{\beta} (g(y_{k})-\gstar),
\end{aligned}
 \end{equation*}
  where $(a)$ is from the fact that $z_{k+1}$ minimizes the subproblem $\min_{y \in \RR^m} g_k(y) + \frac{1}{2\rho}\|y - y_k\|^2$ (see \cref{eq:dual-bala-proximal}), $(b)$ uses the construction $g_k(y) \leq g(y)$ for all $y \in \RR^m$, $(c)$ applies the definition of a descent step \cref{eq:test}, and $(d)$ is due to $\gstar \leq g(y)$ for all $y \in \RR^m$. The inequality \eqref{eq:bound-dual} is then proved by setting $y_{k+1} = z_{k+1}$ as a descent step happens. 
   
  Afterward, \eqref{eq:bound-affine} follows from the dual update $z_{k+1} = y_k + \rho (b-\Amap x_{k+1})$ and the above inequality as
  \begin{align*}
      \|b-\Amap x_{k+1}\|^2 =\frac{1}{\rho^2} \|z_{k+1}-y_k\|^2 \leq \frac{2}{\rho \beta } (g(y_{k})-\gstar)
  \end{align*}
  which is equivalent to \eqref{eq:bound-affine}.
  
\subsection{Proof of \cref{lemma:cost-value}}
\label{apx:lemma:cost-value}
 Using the dual update $y_{k+1} = y_k + \rho (b-\Amap x_{k+1})$ and \cref{eq:ALM-residual}, the upper bound can be established as
        \begin{equation*}
        \begin{aligned}
            \frac{1}{2\rho}  \|y_k\|^2 -\|y_{k+1}\|^2   
            &=  \rho \innerproduct{y_k}{\Amap x_{k+1} -b } + 2\rho \|\Amap x_{k+1} -b \|^2 \\
            &\leq    \|y_k\| \|y_k - y_{k+1}\| +   \frac{1}{2\rho}  \|y_k - y_{k+1}\|^2 .
        \end{aligned}
        \end{equation*}
        We move on to show the lower bound. Let $\xstar$ and $\ystar$ be any primal and dual solutions respectively. The lower bound follows from a standard saddle point argument 
        \begin{align*}
           \pstar  = \calL(\xstar   ,\ystar) & \leq  \calL(x,\ystar) \\
           & = \innerproduct{c}{x} + \innerproduct{\ystar}{b - \Amap x},  \forall x \in \Omega.
        \end{align*}
        This implies 
        \begin{equation*}
            \innerproduct{c}{x} - \pstar \geq -\|\ystar\| \|b - \Amap x\|,\; \forall x \in \Omega.
        \end{equation*}
        Plugging $x = x_{k+1}$ in the above inequality and using \cref{eq:ALM-affine} completes the proof.

\subsection{Proof of \cref{theorem:asymptotic}}
\label{apx:theorem:asymptotic}
\Cref{prop:connection} shows that a descent step implies that \alg{} has found an inexact solution to the subproblem \eqref{eq:ALM-inexact-min}. Viewing the convergence result in \cref{thm:ALM-convergence}, we see that \cref{theorem:asymptotic} will follow immediately if the accumulative inexactness is summable. We show that the accumulative inexactness is summable below.

Let $\mathcal{S} = \{ k\in \mathbb{N} :\text{step } k \text{ is a descent step}\} \cup \{1\}$ be the set of indices for descent steps. Applying \cref{prop:connection} yields
\begin{equation}
    \begin{aligned}
        \label{eq:sum-bounded}
         { \sum_{k \in \mathcal{S}}}  \calL_\rho(x_{k+1},y_k) - \min_{x\in \Omega} \calL_\rho(x,y_k)
      &\leq   \frac{1}{\beta} { \sum_{k \in \mathcal{S}}}g(y_k) - g(y_{k+1}) \\
      &\leq \frac{g(y_1) - g^\star}{\beta} \\
      & < \infty,
    \end{aligned}
\end{equation}
where the second inequality is because the summation is a telescoping sum, $g(y)$ is finite for all $y \in \RR^m$, and $g^\star \leq g(y)$ for all $y \in \RR^m$.     

\subsection{Proof of \cref{theorem:convergence-bundle-dual}}
\label{apx:theorem:convergence-bundle-dual}
As \alg{} is equivalent to a realization of the PBM in the dual (see \cref{subsection:AALA-PBM}), it boils down to show that the dual function $g$ is Lipschitz continuous and all convergence results follow from \cref{thm:convergence-bundle}.

 Let $p : \RR^m \to \RR$ as $ p(y) = \max_{x \in \Omega } \innerproduct{\Ajmap y - c}{x}$ be latter part in \cref{eq:dual}. The subdifferential of $p$ satisfies 
 $$
 \partial p(y) = \{ \Amap x : x \in \Omega, p(y) = \innerproduct{\Ajmap y - c}{x} \} \subseteq  \{ \Amap x : x \in \Omega \},\; \forall y \in \RR^m.
 $$ 
 As the set $\Omega$ is compact, the diameter $D = \max_{y,z \in \Omega} \|y-z\|$ is finite. Given a $y \in \RR^m$ and $v\in \partial p(y)$, a subgradient of $g$ can be computed as $b + v.$ Therefore, subgradients of $g$ can be bounded as 
    \begin{equation*}
         \|s\|\leq \|b\| + \|\Amap\| D, \forall s \in \partial g(y).
    \end{equation*}
    Thus, the function $g$ is $(\|b\| + \|\Amap\|D)$-Lipschitz. The function $g$ is also convex by the definition. Applying \cref{thm:convergence-bundle} directly yields the rate of $\bigO(\epsilon^{-3})$ to find an iterate $y_k$ satisfying $g(y_k) - g^\star \leq \epsilon$. Since the sequence $\{y_k\}$ is bounded by \cref{theorem:asymptotic}, applying \cref{lemma:bundle-dual-consecutive,lemma:cost-value} shows the rate of $\bigO(\epsilon^{-6})$ for $\|\Amap x_{k} - b\|$ and $|\innerproduct{c}{x_{k}} - \pstar|$.

    Finally, viewing the reciprocal relationship between the penalty parameter $\rho$ and the proximal mapping parameter in \cref{eq:dual-bala-proximal} or \eqref{eq:prox-gkj-d}, we see that setting $\rho = \epsilon$ yields the rate of $\bigO(\epsilon^{-2})$ to find an iterate $y_k$ satisfying $g(y_k) - g^\star \leq \epsilon$. Again, the rate of $\bigO(\epsilon^{-4})$ for $\|\Amap x_{k} - b\|$ and $|\innerproduct{c}{x_{k}} - \pstar|$ follows from \cref{lemma:bundle-dual-consecutive,lemma:cost-value}.

\subsection{Proof of \cref{thm:average-iterate}}
\label{apx:thm:average-iterate}
To prove \cref{thm:average-iterate}, we first introduce the following lemma for analyzing the iALM in \cite{xu2021iteration}.

\begin{lemma}[{\cite[Theorem 4]{xu2021iteration}}] 
    \label{lemma:egrotic-rate}
    Let $\{x_k,y_k\}$ be the sequence generated by \cref{alg:alm} with $y_1 = 0$ and $\calL_{\rho}(x_{k+1},y_k) - \min_{x \in \Omega} \calL_{\rho}(x,y_k) \leq \epsilon_k$ for all $k \geq 1$. It holds that
    \begin{align*}
        |\innerproduct{c}{\Bar{x}_k} - \pstar |  \leq &  \frac{1}{k \rho} \left( 2 \|\ystar \|^2 + \rho \textstyle\sum_{i = 1}^k \epsilon_i \right), \\
        \|\Amap \Bar{x}_k - b \| \leq &  \frac{1}{k \rho} \left ( \frac{1 + \|\ystar \|^2}{2} +   \rho  \textstyle\sum_{i = 1}^k \epsilon_i\right ),
    \end{align*}
    where $ \Bar{x}_k = \frac{1}{k}\sum_{i=1}^{k} x_i$ is the average iterate.
\end{lemma}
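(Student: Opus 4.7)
The plan is to derive a single-shot primal-dual inequality at each iteration, telescope it over $i=1,\dots,k$, and then extract the two stated bounds by specializing a free dual variable. The core algebraic identity driving everything is that, by the dual update $y_{i+1} = y_i + \rho(b - \Amap x_{i+1})$, the augmented Lagrangian admits the reformulation
\begin{equation*}
    \calL_\rho(x_{i+1}, y_i) \;=\; \innerproduct{c}{x_{i+1}} + \innerproduct{y}{b - \Amap x_{i+1}} + \frac{1}{2\rho}\bigl(\|y_{i+1} - y\|^2 - \|y_i - y\|^2\bigr), \qquad \forall y \in \RR^m,
\end{equation*}
which follows from writing $b - \Amap x_{i+1} = (y_{i+1} - y_i)/\rho$ and completing the square. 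Combining this with $\epsilon_i$-suboptimality and the fact that $\calL_\rho(\xstar, y_i) = \pstar$ for a primal optimum $\xstar$ (since $\Amap \xstar = b$ collapses both the linear and quadratic penalty terms), I obtain the cornerstone per-iteration bound
\begin{equation*}
    \innerproduct{c}{x_{i+1}} - \pstar + \innerproduct{y}{b - \Amap x_{i+1}} + \frac{1}{2\rho}\bigl(\|y_{i+1} - y\|^2 - \|y_i - y\|^2\bigr) \;\leq\; \epsilon_i.
\end{equation*}

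Summing from $i = 1$ to $k$ telescopes the dual-distance terms to $(\|y_{k+1} - y\|^2 - \|y_1 - y\|^2)/(2\rho)$; dividing by $k$, using linearity of the inner products in $x$ so that Jensen holds with equality for the running average $\bar{x}_k$, invoking the initialization $y_1 = 0$, and discarding the non-positive term $-\|y_{k+1} - y\|^2/(2k\rho)$ gives the master inequality, valid for every $y \in \RR^m$:
\begin{equation*}
    \innerproduct{c}{\bar{x}_k} - \pstar + \innerproduct{y}{b - \Amap \bar{x}_k} \;\leq\; \frac{\|y\|^2}{2k\rho} + \frac{1}{k}\sum_{i=1}^k \epsilon_i.
\end{equation*}
Setting $y = 0$ immediately produces an upper bound on $\innerproduct{c}{\bar{x}_k} - \pstar$ (in fact sharper than stated, since $2\|\ystar\|^2/(k\rho) \geq 0$ can only be added). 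A matching lower bound comes from the saddle-point inequality $\calL(\bar{x}_k, \ystar) \geq \calL(\xstar, \ystar) = \pstar$, which holds because $\bar{x}_k \in \Omega$ by convexity and $\xstar$ minimizes $\calL(\cdot, \ystar)$ over $\Omega$; this yields $\innerproduct{c}{\bar{x}_k} - \pstar \geq -\|\ystar\|\, \|\Amap \bar{x}_k - b\|$, thereby reducing the cost-gap analysis to controlling the feasibility residual.

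For the feasibility residual I would specialize the master inequality to $y = -\lambda u$, where $u = (\Amap \bar{x}_k - b)/\|\Amap \bar{x}_k - b\|$ and $\lambda > 0$ is a tunable scalar. This substitution makes $\innerproduct{y}{b - \Amap \bar{x}_k} = \lambda \|\Amap \bar{x}_k - b\|$, and combining with the saddle-point lower bound on the cost gap gives
\begin{equation*}
    (\lambda - \|\ystar\|)\, \|\Amap \bar{x}_k - b\| \;\leq\; \frac{\lambda^2}{2k\rho} + \frac{1}{k}\sum_{i=1}^k \epsilon_i.
\end{equation*}
Selecting $\lambda = \|\ystar\| + 1$ isolates $\|\Amap \bar{x}_k - b\|$ and yields a feasibility bound of the announced form; substituting it back into the saddle-point lower bound then recovers the claimed cost-gap estimate. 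The principal bookkeeping obstacle is squeezing the coefficients down to the advertised constants $\tfrac12$ and $2$: the naive choice $\lambda = \|\ystar\| + 1$ produces $(\|\ystar\| + 1)^2/(2k\rho)$, so to land exactly at $(1+\|\ystar\|^2)/(2k\rho)$ one either applies a mild algebraic inequality such as $(\|\ystar\| + 1)^2 \leq 2(1 + \|\ystar\|^2)$ or optimizes $\lambda$ and absorbs the cross term into the cost estimate. This is a purely scalar calculus issue and presents no conceptual difficulty; the conceptual content of the proof is entirely the telescoping identity and the saddle-point comparison.
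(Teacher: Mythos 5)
The paper itself never proves this lemma---it is imported directly from \cite[Theorem 4]{xu2021iteration}---so your attempt must be judged against the standard ergodic-ALM argument underlying that citation, and your skeleton is exactly that argument. Your core steps are all sound: the completed-square identity for $\calL_\rho(x_{i+1},y_i)$ is correct; $\calL_\rho(\xstar,y_i)=\pstar$ holds because $\Amap\xstar=b$ kills both penalty terms; telescoping with $y_1=0$ and discarding $\|y_{k+1}-y\|^2/(2k\rho)\ge 0$ gives your master inequality; $y=0$ gives the upper cost bound; the saddle inequality gives the lower one; and the aligned choice $y=-\lambda u$ correctly yields $(\lambda-\|\ystar\|)\,\|\Amap\bar{x}_k-b\|\le \lambda^2/(2k\rho)+\frac{1}{k}\sum_i\epsilon_i$. (A minor indexing point: the average should run over the computed iterates $x_2,\dots,x_{k+1}$, which is what your telescoping produces and what the paper itself uses in \Cref{cor:primal-average}.)

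The gap is that neither stated bound actually comes out of your endgame. For feasibility, $\lambda=1+\|\ystar\|$ gives $(1+\|\ystar\|)^2/(2k\rho)$, and your inequality $(1+\|\ystar\|)^2\le 2(1+\|\ystar\|^2)$ goes the wrong way: it lands you at $(1+\|\ystar\|^2)/(k\rho)$, \emph{twice} the stated constant; nor can optimizing $\lambda$ reach it, since for $\|\ystar\|=1$ and $\epsilon_i\equiv 0$ the minimum over $\lambda>1$ of $\frac{\lambda^2}{2k\rho(\lambda-1)}$ equals $2/(k\rho)$ versus the stated $1/(k\rho)$---so your scheme proves the right $\bigO(1/k)$ rate but not the quoted constant. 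More seriously, the cost bound does not follow by ``substituting back'': that route gives $|\innerproduct{c}{\bar{x}_k}-\pstar|\le\|\ystar\|\cdot(\text{feasibility bound})$, which carries a $\|\ystar\|^3$ term and a $\|\ystar\|\cdot\frac{1}{k}\sum_i\epsilon_i$ term, and is strictly weaker than the stated $\frac{2\|\ystar\|^2}{k\rho}+\frac{1}{k}\sum_i\epsilon_i$ whenever $\|\ystar\|>1$. The missing device is a second specialization of your master inequality at $y=2\ystar$, combined with the saddle inequality kept in inner-product form, $\innerproduct{c}{\bar{x}_k}-\pstar\ge-\innerproduct{\ystar}{b-\Amap\bar{x}_k}$:
\begin{equation*}
    \pstar-\innerproduct{c}{\bar{x}_k}\;\le\;\bigl(\innerproduct{c}{\bar{x}_k}-\pstar\bigr)+2\innerproduct{\ystar}{b-\Amap\bar{x}_k}\;\le\;\frac{\|2\ystar\|^2}{2k\rho}+\frac{1}{k}\sum_{i=1}^k\epsilon_i\;=\;\frac{1}{k\rho}\Bigl(2\|\ystar\|^2+\rho\sum_{i=1}^k\epsilon_i\Bigr),
\end{equation*}
which is exactly the stated cost bound and needs no feasibility estimate at all. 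With that fix your argument establishes the lemma up to the precise constant in the feasibility bound, whose exact value of $\tfrac{1}{2}(1+\|\ystar\|^2)$ requires the finer accounting of the cited theorem rather than the norm-based saddle comparison you use.
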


\Cref{lemma:egrotic-rate} shows that the cost value gap and the affine feasibility of the average iterate tend to zero, provided the accumulative inexactness does not blow up. Fortunately, the accumulative inexactness of \alg~is naturally bounded as shown in \cref{eq:sum-bounded}, leading to the following result.  

\begin{corollary}[Primal average iterate for \Cref{alg:bundle-dual}]
    \label{cor:primal-average} 
    At every descent step $k$ in \alg{} with $y_1 = 0$, the followings hold
    \begin{align*}
        |\innerproduct{c}{\Bar{x}_k} - \pstar | & \leq \frac{2 \|\ystar \|^2}{|\calS_k|\rho}   +  \frac{g(y_1) - g^\star }{ |\calS_k|\beta}, \\
        \|\Amap  \Bar{x}_k - b\| &  \leq \frac{1 + \|\ystar\|^2}{ 2 |\calS_k| \rho}  + \frac{g(y_1) - g^\star }{ |\calS_k| \beta},
    \end{align*}
    where $\calS_k = \{ j \in \mathbb{N} :\text{step } j \text{ is a descent step}, j \leq k \} \cup \{0\}$ is the set of indices of descent steps before $k$ and $\Bar{x}_k = \frac{1}{|\calS_k|} \sum_{j \in \calS_k } x_{j+1}$ is the average iterate.
\end{corollary}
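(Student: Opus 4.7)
The plan is to reduce the corollary to a direct application of \Cref{lemma:egrotic-rate}, exploiting the fact established in \Cref{lemma:BALA-ALM} that every descent step of \alg{} is, in effect, one inexact ALM iteration. Concretely, I would re-index the descent steps of \alg{} as $j = 1, 2, \ldots, |\calS_k|$ and define, for each such $j$, the inexactness
\[
\epsilon_j := (g(\tilde{y}_{j-1}) - g(\tilde{y}_j))/\beta,
\]
where $\{\tilde{y}_j\}$ are the dual iterates restricted to descent steps (so $\tilde{y}_0 = y_1 = 0$, $\tilde{y}_{|\calS_k|} = y_k$). By \Cref{lemma:BALA-ALM}, the primal point produced at descent step $j$, namely $x_{j+1}$ in the re-indexed notation, is an $\epsilon_j$-suboptimal solution to $\min_{x\in\Omega}\calL_\rho(x,\tilde{y}_{j-1})$, and the dual update \cref{eq:dual-update-ALM} matches the ALM dual step \cref{eq:ALM-exact-update}. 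Thus the sequence of descent steps of \alg{} can be viewed as a valid run of \Cref{alg:alm} with inexactness sequence $\{\epsilon_j\}_{j=1}^{|\calS_k|}$ and initialization $y_1 = 0$.

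Next, I would invoke \Cref{lemma:egrotic-rate} with this re-indexed sequence. It yields
\begin{align*}
|\innerproduct{c}{\Bar{x}_k} - \pstar| &\leq \tfrac{1}{|\calS_k|\rho}\Bigl(2\|\ystar\|^2 + \rho \textstyle\sum_{j=1}^{|\calS_k|}\epsilon_j\Bigr), \\
\|\Amap \Bar{x}_k - b\| &\leq \tfrac{1}{|\calS_k|\rho}\Bigl(\tfrac{1+\|\ystar\|^2}{2} + \rho \textstyle\sum_{j=1}^{|\calS_k|}\epsilon_j\Bigr),
\end{align*}
with $\bar{x}_k$ equal to the average of the descent-step primal iterates, which is exactly the $\bar{x}_k$ in the corollary's statement.

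The only remaining ingredient is to bound the accumulated inexactness. By the telescoping argument already used in \cref{eq:sum-bounded},
\[
\textstyle\sum_{j=1}^{|\calS_k|} \epsilon_j \;=\; \tfrac{1}{\beta}\textstyle\sum_{j=1}^{|\calS_k|} \bigl(g(\tilde{y}_{j-1}) - g(\tilde{y}_j)\bigr) \;\leq\; \tfrac{g(y_1) - g^\star}{\beta},
\]
since the sum telescopes and $g(\tilde{y}_{|\calS_k|}) \geq g^\star$. Substituting this bound into the two displays above yields the claimed inequalities.

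The main potential obstacle is purely bookkeeping: one must verify that the re-indexing is legitimate, i.e.\ that between two consecutive descent steps the primal iterate $x_{k+1}$ and dual iterate $y_{k+1}$ of \alg{} are literally frozen (by the null-step rule in \Cref{alg:bundle-dual}), so that collapsing null steps does not alter either sequence $\{x_{k+1}\}_{k\in\calS_k}$ or $\{y_{k+1}\}_{k\in\calS_k}$ that enter the bounds, and that the dual ascent step \cref{eq:dual-update-ALM} at a descent step coincides exactly with the ALM update \cref{eq:ALM-exact-update} with the same $\rho$. Both are immediate from the construction of \alg{}, so no new analytic estimate is required beyond \Cref{lemma:BALA-ALM}, \Cref{lemma:egrotic-rate}, and the telescoping bound.
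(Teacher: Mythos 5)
Your proposal is correct and follows essentially the same route as the paper's own proof: interpret each descent step as an inexact ALM iteration via \Cref{lemma:BALA-ALM} (equivalently \Cref{prop:connection}) with $\epsilon_k = (g(y_k)-g(y_{k+1}))/\beta$, apply \Cref{lemma:egrotic-rate}, and bound the accumulated inexactness by the telescoping argument of \cref{eq:sum-bounded}. Your write-up is merely more explicit than the paper about the re-indexing that collapses null steps (during which iterates are frozen), which the paper leaves implicit.
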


\begin{proof}
The proof is straightforward. By \cref{prop:connection}, we know $\calL(x_{k+1},y_k) - \min_{x\in \Omega} \calL(x,y_k) \leq   \frac{1}{\beta} (g(y_k) - g(y_{k+1}))$ if step $k$ is a descent step. The desired results then follow from setting $\epsilon_k  =  \frac{1}{\beta} (g(y_k) - g(y_{k+1}))$ in \cref{lemma:egrotic-rate}.
\end{proof}

From \cref{cor:primal-average}, it is clear to see that the residuals  $|\innerproduct{c}{\Bar{x}_k} - \pstar |$ and $\|\Amap  \Bar{x}_k - b\|$ converge to zero in the rate of $\bigO(1/k)$ with respect to the descent steps. By also incorporating the number of null steps to generate a descent step in \cref{thm:convergence-bundle}, we immediately prove \cref{cor:primal-average}. Specifically, \cref{thm:convergence-bundle} shows that a descent step happens after at most $\bigO(1/\epsilon^{2})$ consecutive null steps. On the other hand, \cref{cor:primal-average} shows $|\innerproduct{c}{\Bar{x}_k} - \pstar |$ and $\|\Amap \Bar{x}_k - b \|$ will be smaller than $\epsilon$ in at most $\bigO(1/\epsilon)$ number of descent steps. Thus, the total number of iterations is at most $\bigO(1/\epsilon^{3})$. 

\subsection{Proof of \cref{thm:linear-convergence}}
\label{apx:thm:linear-convergence}
We start by showing the following important theorem, from which \cref{thm:linear-convergence} will readily follow.

\begin{theorem}
    \label{thm:error-quadratic}
     Let $\alpha > 0$, $x_k \in \RR^n$, and $f:\RR^n \to \RR$. Suppose $f_k :\RR^n \to \RR $ is a convex function that satisfy 
    \begin{equation}
    \label{eq:close-upper}
        f_k(x) \leq f(x) \leq f_k(x) + \frac{\alpha}{2}\|x - x_k\|^2, \; \forall x \in \RR^n.
    \end{equation}
    If $\theta \geq \alpha$ and $x_{k+1} = \argmin_{x} f_k(x) + \frac{\theta}{2}\|x - x_k\|^2$, then it holds that
    \begin{align*}
    0 \leq f(x_{k+1}) - f_k(x_{k+1}) \leq  f(x_k)  - f(x_{k+1}).
\end{align*}
Furthermore, suppose $f$ is convex and there exists a constant $\mu > 0$ such that  
\begin{equation}
    \label{eq:QG-f}
    \frac{\mu}{2} \cdot \Dist^2(x,S) \leq f(x) - f^\star,\; \forall x \in \RR^n,
\end{equation}
where $f^\star = \min_{x\in \RR^n} f(x)$ and $S = \argmin_{x\in \RR^n} f(x)$ is assumed to be nonempty. Then it holds that
\begin{align*}
    \Dist(x_{k+1},S) \leq \sqrt{\frac{\theta}{\theta +\mu}}\Dist(x_k,S).
\end{align*}
\end{theorem}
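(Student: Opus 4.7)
The plan is to run both claims through the $\theta$-strong convexity of the regularized surrogate $F_k(x) := f_k(x) + \tfrac{\theta}{2}\|x - x_k\|^2$ (whose unique minimizer is $x_{k+1}$ by assumption), combined with the sandwich \eqref{eq:close-upper} and the key degeneracy $f(x_k) = f_k(x_k)$, which follows by evaluating \eqref{eq:close-upper} at $x = x_k$ where the quadratic error $\tfrac{\alpha}{2}\|x_k - x_k\|^2$ vanishes.

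For the first claim, the lower inequality $0 \leq f(x_{k+1}) - f_k(x_{k+1})$ is immediate from the left half of \eqref{eq:close-upper}. For the upper inequality, I would specialize the strong convexity estimate $F_k(x) \geq F_k(x_{k+1}) + \tfrac{\theta}{2}\|x - x_{k+1}\|^2$ to $x = x_k$, which collapses to
$$f_k(x_k) - f_k(x_{k+1}) \;\geq\; \theta\,\|x_{k+1} - x_k\|^2.$$
Replacing $f_k(x_k)$ by $f(x_k)$ on the left and using $f(x_{k+1}) - f_k(x_{k+1}) \leq \tfrac{\alpha}{2}\|x_{k+1} - x_k\|^2$ from the right half of \eqref{eq:close-upper}, subtracting yields $f(x_k) - f(x_{k+1}) \geq (\theta - \tfrac{\alpha}{2})\,\|x_{k+1} - x_k\|^2$, which under $\theta \geq \alpha$ dominates $\tfrac{\alpha}{2}\,\|x_{k+1} - x_k\|^2 \geq f(x_{k+1}) - f_k(x_{k+1})$.

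For the second claim, let $\bar{x}_k \in S$ be a projection of $x_k$, so $\|\bar{x}_k - x_k\| = \Dist(x_k, S)$. Apply strong convexity of $F_k$ at the comparison point $\bar{x}_k$ to get
$$f_k(\bar{x}_k) + \frac{\theta}{2}\|\bar{x}_k - x_k\|^2 \;\geq\; f_k(x_{k+1}) + \frac{\theta}{2}\|x_{k+1} - x_k\|^2 + \frac{\theta}{2}\|\bar{x}_k - x_{k+1}\|^2.$$
Then substitute $f_k(\bar{x}_k) \leq f(\bar{x}_k) = f^\star$ and $f_k(x_{k+1}) \geq f(x_{k+1}) - \tfrac{\alpha}{2}\|x_{k+1} - x_k\|^2$, invoke the quadratic growth \eqref{eq:QG-f} to lower bound $f(x_{k+1}) - f^\star$ by $\tfrac{\mu}{2}\Dist^2(x_{k+1}, S)$, discard the nonnegative residual $\tfrac{\theta-\alpha}{2}\|x_{k+1}-x_k\|^2$ (allowed by $\theta \geq \alpha$), and bound $\|\bar{x}_k - x_{k+1}\|^2 \geq \Dist^2(x_{k+1}, S)$. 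Rearranging gives $\theta\,\Dist^2(x_k, S) \geq (\theta + \mu)\,\Dist^2(x_{k+1}, S)$, and taking square roots yields the stated contraction rate.

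The main subtlety I anticipate is keeping the hypothesis at the advertised threshold $\theta \geq \alpha$ rather than the more permissive $\theta \geq 2\alpha$ that a looser bookkeeping would force. This requires extracting the \emph{full} factor $\theta$ from strong convexity at $x = x_k$ by keeping both quadratic terms on the right-hand side of the strong convexity inequality, rather than using only the standard $\tfrac{\theta}{2}$-gap between a minimizer and a competing point. The same careful accounting is what allows the $\tfrac{\theta - \alpha}{2}$ term in part two to be safely discarded as nonnegative.
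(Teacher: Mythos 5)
Your proposal is correct and follows essentially the same route as the paper's proof: both parts rest on the first-order inequality for the $\theta$-strongly convex surrogate $f_k + \tfrac{\theta}{2}\|\cdot - x_k\|^2$ at its minimizer $x_{k+1}$, evaluated at $x_k$ (part one) and at the projection of $x_k$ onto $S$ (part two), then combined with the sandwich \eqref{eq:close-upper}, quadratic growth \eqref{eq:QG-f}, and the threshold $\theta \geq \alpha$ exactly as in the paper. The only cosmetic difference is that you invoke the equality $f(x_k) = f_k(x_k)$ (which \eqref{eq:close-upper} indeed forces), whereas the paper only needs the one-sided bound $f_k(x_k) \leq f(x_k)$; this changes nothing in the argument.
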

\begin{proof}
    We note that the function $f_k + \frac{\theta}{2}\|\cdot-x_k\|$ is $\theta$-strongly convex. Using the strongly convex first-order inequality for $f_k + \frac{\theta}{2}\|\cdot-x_k\|$ at $x_{k+1}$ with $0 \in \partial (f_k(x_{k+1}) + \frac{\theta}{2}\|x_{k+1}-x_k\|^2) $, we see that 
\begin{align*}
     f_{k}(x_{k+1}) + \theta \|x_{k+1} - y_k\|^2& = f_{k}(x_{k+1}) + \frac{\theta}{2} \|x_{k+1} - x_k\|^2 + \frac{\theta}{2} \|x_k - x_{k+1}\|^2 \\
     & \leq f_k(x_k) \\
     & \leq f(x_k),
\end{align*}
where the last inequality uses $f_{k} \leq f$. Using \eqref{eq:close-upper} with $x = x_{k+1}$ and the above inequality, we see that 
\begin{align*}
    f(x_k) - f(x_{k+1})  &\geq f_{k}(x_{k+1}) + \theta \|x_{k+1} - y_k\|^2 - f_k(x_{k+1})  - \frac{\alpha}{2}\|x_{k+1} - x_k\|^2 \\
    & \geq \frac{\alpha}{2} \|x_{k+1} - y_k\|^2,
\end{align*}
where the last inequality uses $\theta \geq \alpha$. Therefore, we have 
\begin{align*}
    f(x_{k+1}) - f_k(x_{k+1}) & \leq \frac{\alpha}{2}\|x_{k+1} - x_k\|^2 \\
    & \leq f(x_k) - f(x_{k+1}),
\end{align*}
where the first inequality comes from \eqref{eq:close-upper}.

We now move on to prove the second result. Again, using the first-order inequality for the strongly convex function $f_k + \frac{\theta}{2} \|\cdot - x_k\|^2$ at $x_{k+1}$ with $0 \in \partial ( f_k(x_{k+1}) + \frac{\theta}{2} \|x_{k+1} - x_k\|^2)$, we see that 
\begin{align*}
    f_{k}(x_{k+1}) + \frac{\theta}{2} \|x_{k+1} - x_k\|^2 + \frac{\theta}{2} \|\Pi_{S}(x_k)  - x_{k+1}\|^2 & \leq f_k(\Pi_{S}(x_k)) + \frac{\theta}{2}\|\Pi_{S}(x_k) - x_k\|^2 \\
    &\leq f^\star + \frac{\theta}{2}\Dist^2(x_k,S),
\end{align*}
where $\Pi_{S}(x_k) = \argmin_{x \in S} \|x-x_k\|$ is the projection of $x_k$ on to the set $S$.
Applying \eqref{eq:close-upper} in the above inequality yields 
\begin{equation*}
    f(x_{k+1})+ \frac{\theta}{2} \|\Pi_{S}(x_k)  - x_{k+1}\|^2 \leq  f^\star + \frac{\theta}{2}\Dist^2(x_k,S).
\end{equation*}
Finally, applying \eqref{eq:QG-f} to above inequality and using the fact $\|\Pi_{S}(x_k)  - x_{k+1}\|  \geq \Dist(x_{k+1},S)$, we arrive at 
\begin{align*}
    & \frac{\mu}{2}\Dist^2(x_{k+1},S)  + \frac{\theta}{2}\Dist^2(x_{k+1},S)  \leq \frac{\theta}{2}\Dist^2(x_k,S) \\
   \Longrightarrow \quad & \Dist(x_{k+1},S) \leq \sqrt{\frac{\theta}{\theta +\mu}}\Dist(x_k,S).
\end{align*}
This completes the proof. 
\end{proof}
Aside from the linear reduction, one important message from \cref{thm:error-quadratic} is that the proximal point $x_{k+1}$ is guaranteed to be as good as the previous point $x_k$ even though the function $f_k$ is not the same as the true function $f$, provided the proximal parameter is large enough. It is this observation that allows us to show that a descent step will always happen.

With \cref{thm:error-quadratic}, we are ready to show that every step in \alg{} is a descent step under the assumptions in \cref{thm:linear-convergence}. Recall \cref{eq:dual-bala-proximal} shows that the dual candidate $z_{k+1}$ can be equivalently acquired as $$z_{k+1} = \argmin_{y \in \RR^m} g_{k}(y) + \frac{1}{2\rho}\|y- y_k\|^2.$$

Since we assume that $g_k$ satisfies \eqref{eq:quadratic-close} and $\frac{1}{\rho} \geq \gamma$ where $\gamma$ is the constant in \eqref{eq:quadratic-close}, applying \cref{thm:error-quadratic} on $g$ and $g_k$ shows
\begin{align*}
    \beta (g(y_k) - g_k(z_{k+1})) & = \beta (g(y_k) -g(z_{k+1})) + \beta (g(z_{k+1}) - g_k(z_{k+1}))  \\
    & \leq \beta (g(y_k) -g(z_{k+1})) + \beta( g(y_k)  - g(z_{k+1})) \\
    & = 2 \beta (g(y_k) -g(z_{k+1}))\\
    & \leq  g(y_k) -g(z_{k+1}),
\end{align*}
where the last inequality uses the choice $\beta \in (0,1/2]$ and $g(y_k) - g(z_{k+1}) \geq 0$. 
Hence, the test criterion \eqref{eq:test} is satisfied and we conclude that every step $k \geq T$ is a descent step.

Moreover, \cref{thm:error-quadratic} also shows that 
\begin{align*}
   \Dist(y_{k+1},\dsolution) = \Dist(z_{k+1},\dsolution) \leq \sqrt{\frac{1/\rho}{1/\rho +\alpha}}\Dist(y_k,\dsolution),
\end{align*}
where $\dsolution$ is the optimal solution set of the dual problem \eqref{eq:dual}, $\alpha$ is the quadratic growth constant in \eqref{eq:QG}, and the equality is due to a descent step. We see that the dual sequence $\{y_k\}$ converges linearly to the solution set. Since the dual function $g$ is $(\|b\| + \|\Amap\|D)$-Lipschitz as shown in the \cref{apx:theorem:convergence-bundle-dual}, where $D$ is the diameter of the set $\Omega$, the cost value gap $g(y_k) - g^\star$ also converges linearly as 
\begin{align}
    g(y_k) - g^\star \leq(\|b\| + \|\Amap\|D) \Dist(y_k,\dsolution). \label{eq:linear-reduction-g}
\end{align}

Finally, the convergence on the primal iteration follows from \cref{lemma:bundle-dual-consecutive,lemma:cost-value} as 
\begin{align}
     \| \Amap x_{k+1} - b\|^2  \leq &\frac{2}{\beta \rho } (g(y_{k}) - \gstar),  \label{eq:Ax-1}\\
 -\|\ystar\|\|\Amap x_{k+1} -  b\|   \leq \innerproduct{c}{x_{k+1}} - p^\star  &\leq  \frac{2}{\beta}(g(y_k)  -  \gstar)  + \rho^2 \|y_k\|  \|\Amap x_{k+1} - b\|. \label{eq:cx-p}
\end{align}
From \eqref{eq:Ax-1} and \eqref{eq:linear-reduction-g}, we have 
\begin{align*}
    \| \Amap x_{k+1} - b\|^2 &  \leq \frac{2}{\beta \rho }(\|b\| + \|\Amap\|D) \Dist(y_k,\dsolution).
\end{align*}
From \eqref{eq:cx-p} and \eqref{eq:linear-reduction-g}, we also have 
\begin{align*}
    |\innerproduct{c}{x_{k+1}} - p^\star|^2 & \leq \max\{ \underbrace{\|\ystar\|^2 \| \Amap x_{k+1} - b\|^2}_{R_1},\underbrace{(\frac{2}{\beta}(g(y_k)  -  \gstar)  + \rho^2 \|y_k\|  \|\Amap x_{k+1} - b\|)^2}_{R_2} \}.
\end{align*}
The first term $R_1$ can be bounded as 
\begin{equation*}
    R_1 \leq \|\ystar\|^2 \frac{2}{\beta \rho }(\|b\| + \|\Amap\|D) \Dist(y_k,\dsolution).
\end{equation*}
The second term $R_2$ can also be bounded as 
\begin{align*}
    R_2 & \overset{(a)}{\leq}  \frac{4}{\beta^2}(g(y_k)  -  \gstar)^2 + \frac{4}{\beta}(g(y_k)  -  \gstar) \rho^2 \|y_k\|  \|\Amap x_{k+1} - b\| + \rho^4 \|y_k\|^2  \|\Amap x_{k+1} - b\|^2 \\
    & \overset{(b)}{\leq}  \frac{4}{\beta^2}(g(y_k)  -  \gstar)^2 + \frac{4}{\beta}(g(y_k)  -  \gstar) \rho^2 \|y_k\|  \sqrt{\frac{2}{\beta \rho } (g(y_{k}) - \gstar)} + \rho^4 \|y_k\|^2  \frac{2}{\beta \rho } (g(y_{k}) - \gstar) \\
    & \overset{(c)}{=}  \left(\frac{4}{\beta^2}(g(y_k)  -  \gstar) + \frac{4}{\beta} \rho^2 \|y_k\|  \sqrt{\frac{2}{\beta \rho } (g(y_{k}) - \gstar)} + \rho^4 \|y_k\|^2  \frac{2}{\beta \rho }\right )  (g(y_{k}) - \gstar)\\
    & \overset{(d)}{\leq}  \left(\frac{4}{\beta^2}(g(y_T)  -  \gstar) + \frac{4}{\beta} \rho^2 E  \sqrt{\frac{2}{\beta \rho } (g(y_{T}) - \gstar)} + \rho^4 E^2  \frac{2}{\beta \rho }\right )  (\|b\| + \|\Amap\|D) \Dist(y_k,\dsolution),
\end{align*}
where $(a)$ comes from squaring both sides of \eqref{eq:cx-p}, $(b)$ applies \eqref{eq:Ax-1}, $(c)$ factors out the common term $(g(y_{k}) - \gstar)$, and $(d)$ uses the fact that $g(y_{k}) - \gstar \leq g(y_{T}) - \gstar$ for all $k \geq T $ and sets $E$ as constant such that $\|y_k\| \leq E$ for all $k \geq 1$. We note that the constant $E$ exists as the sequence $\{y_k\}$ is convergent \cref{theorem:asymptotic}. 

Thus, the two constants $\mu_1$ and $\mu_2$ such that $\mu_1 \in (0,1), \mu_2 > 0$, 
\begin{align*}
    \Dist(y_{k+1}, \dsolution) & \leq \mu_1 \cdot \Dist(y_{k}, \dsolution), \\
    \max\{g(y_k) - g^\star, \|\Amap x_{k} - b \|^2, | \innerproduct{c}{x_k} - \pstar |^2\} &\leq \mu_2  \cdot  \Dist(y_{k}, \dsolution),
\end{align*}
can be chosen as $\mu_1   = \sqrt{\frac{1/\rho}{1/\rho +\alpha}}$ and 
\begin{align*}
      \mu_2 = \max\left\{ 1,\frac{2 \|\ystar\|^2}{\beta \rho } , \left(\frac{4}{\beta^2}(g(y_T)  -  \gstar) + \frac{4}{\beta} \rho^2 E  \sqrt{\frac{2}{\beta \rho } (g(y_{T}) - \gstar)} + \rho^4 E^2  \frac{2}{\beta \rho }\right )  \right\} (\|b\| + \|\Amap\|D).
\end{align*}
This completes the proof of \cref{thm:linear-convergence}.

\section{Illustration of \alg{} with a simple two-dimensional LP instance}
\label{apx:section:illustration}
In this section, we use a simple two-dimensional example to demonstrate the process of \alg{}. We consider the simple primal and dual linear program

\begin{minipage}{0.49\textwidth}
\begin{equation*}
       \begin{aligned}
        \min_{x \in \RR^2} & \quad x_1 + x_2 \\\mathrm{subject~to} & \quad  2x_1 + x_2 = 1, \\
        & \quad x \in \RR^2_+, |x|_1 \leq 1,
    \end{aligned}
\end{equation*}
\end{minipage}
\begin{minipage}{0.49\textwidth}
\begin{equation*}
    \begin{aligned}
        -\min_{y \in \RR} & \quad g(y) = -y + \max\{ 2y - 1, y-1 , 0\}.
    \end{aligned}
\end{equation*}
\end{minipage}
The above primal problem is as the form of \eqref{eq:primal} with the problem data $c = \begin{bmatrix}
    1,1
\end{bmatrix}^\tr,\Amap = \begin{bmatrix}
    2, 1
\end{bmatrix}, b = 1$, and $\Omega = \{x\in \RR^2_+ : |x|_1 \leq 1  \}$. The primal problem has the unique optimal solution $\xstar = (0.5,0)$. 
We run \alg{} with the initializations
$$x_1 = w_1 = (0.5,0.5), y_1 = 0,v_1 = (0,0), \beta = 0.25,~\text{and}~\rho = 1.5,$$
and consider two different constructions of the inner approximation: 1) line segment approximation and 2) convex hull approximation with three points. The convex hull approximation with three points always contains the line approximation. In both constructions, the subproblem admits an analytical solution.

As discussed in \cref{apx:subsection:compuation}, at iteration $k$, the dual information $v_{k+1}$ is constructed as one of $(0,0),(1,0),$ or $(0,1)$. In this example, we have the explicit form 
\begin{align*}
    v_{k+1} = \begin{cases}
        (0,0),  & \text{if } y > \frac{1}{2}, \\ 
        (1,0),  & \text{if } y \leq  \frac{1}{2},
    \end{cases}
\end{align*}
since $2y - 1 \geq  y-1 ,\forall y \geq 0$. 

\vspace{3pt}

\noindent \textbf{Line segment approximation.} 
In \cref{fig:visualization-line,fig:convergence-line}, we present a visualization of \alg{} and its convergence behavior.  
At the first iteration, the constraint set \(\Omega_1\) is defined as the line segment between \(x_1\) and \((0,0)\) (blue line in \cref{fig:visualization-line}). Solving the subproblem yields the primal candidate point \(w_2\) (orange line in \cref{fig:visualization-line}). From the candidate \(w_2\), we construct the next inner approximation as the line segment between \(w_2\) and \((1,0)\). This process continues iteratively: generating a new candidate point \(w_3\), forming \(\Omega_3\) as the line segment between \(w_3\) and \((0,0)\), and repeating the update.  

We observe that the candidate solution \(w_k\) always remains within the set \(\Omega\), though the affine constraint may be violated. Readers familiar with Frank-Wolfe algorithms might initially see \alg{} as equivalent to a Frank-Wolfe algorithm with line search. However, \alg{} differs in key aspects. In a Frank-Wolfe algorithm, all iterates strictly remain within the feasible region, ensuring constraint satisfaction at every step. In contrast, \alg{} permits infeasibility in the affine constraint. Moreover, \alg{} incorporates an additional testing step \eqref{eq:test}, which serves as a safeguard to determine whether the current iterate should be accepted. Unlike \alg{}, the Frank-Wolfe algorithm always accepts new iterates, without assessing their quality before updating.

\vspace{3pt}

\noindent \textbf{Convex hull with three points.} 
Despite the simplicity and the convergence guarantee of using the line segment approximation, the convergence can be slow when the solution is on the boundary. In this case, it is more effective to use a stronger approximation to better capture the solution. At iteration $k$, we consider the approximation
\begin{equation*}
    \Omega_{k} = \mathrm{conv}(0,v_k,w_k) = \{\alpha v_k + \gamma w_k : \alpha \geq 0, \gamma \geq 0 , \alpha + \gamma \leq 1 \},
\end{equation*}
which contains more information than the line segment approximation. In \cref{fig:visualization-convexhull}, we see that the first inner approximation is still a line segment between $x_1$ and $(0,0)$ as the dual information $v_1$ is $(0,0)$. Therefore, it yields the same candidate $w_2$ as the case in the line segment approximation. Starting from the second iteration, the dual information $v_k$ becomes nonzero and the constraint set $\Omega_k$ becomes a triangle. As a better approximation is considered, it is expected that the algorithm can achieve a faster convergence rate. Specifically, the algorithm obtains the candidate $w_3$ that is already closer to the optimal solution $\xstar$, compared with the candidates acquired using the line segment approximation. The better convergence performance can also be confirmed from \cref{fig:convergence-convexhull}, which shows the linear convergence of \alg{}. 

\begin{figure}[t]
     \centering
     \subfigure[Line segment update]{
{\includegraphics[width=0.22\textwidth]
{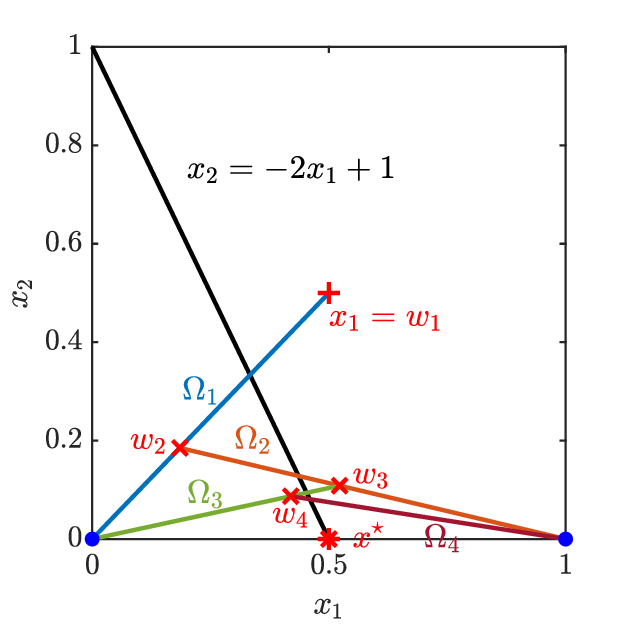}}
\label{fig:visualization-line}
}
\subfigure[Convergence behavior]{
{\includegraphics[width=0.22\textwidth]
{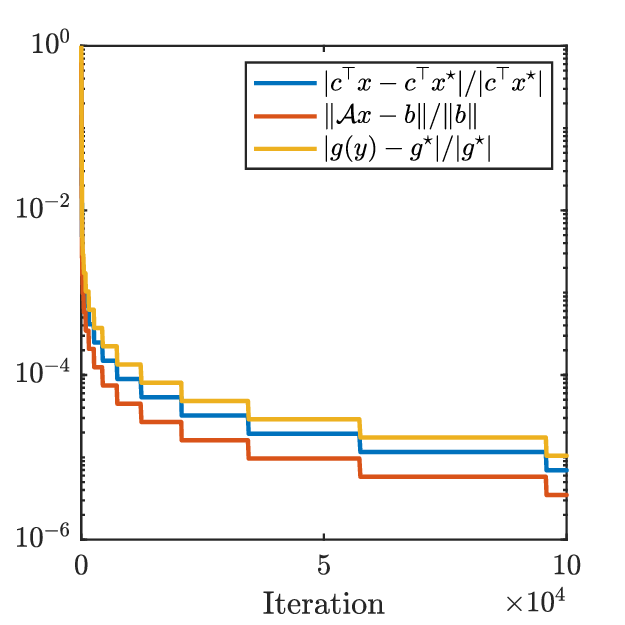}}
\label{fig:convergence-line}
}
     \subfigure[Convex hull:\! three points]{
{\includegraphics[width=0.22\textwidth]
{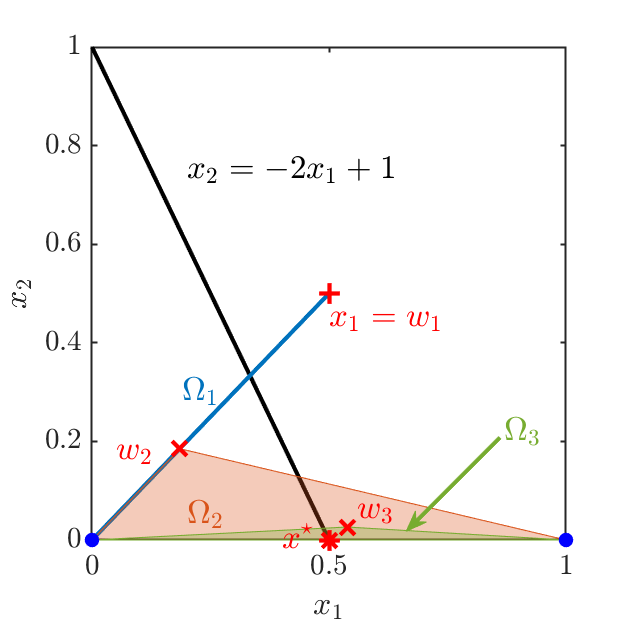}}
\label{fig:visualization-convexhull}
}
\subfigure[Convergence behavior]{
{\includegraphics[width=0.22\textwidth]
{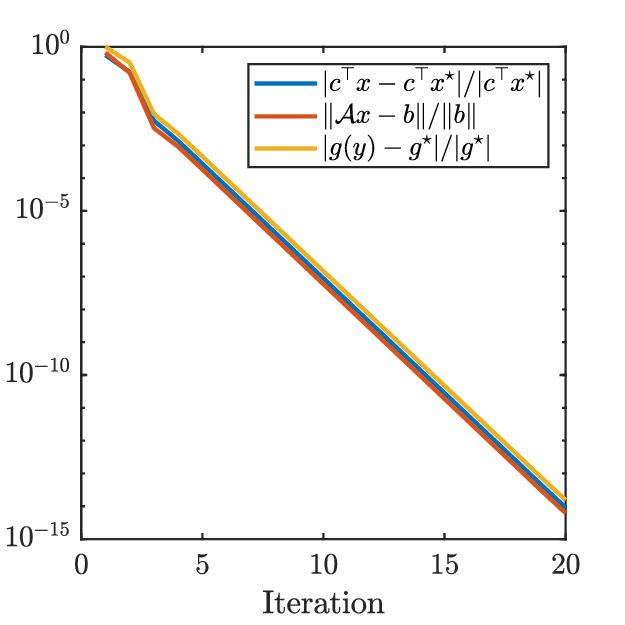}}
\label{fig:convergence-convexhull}
}
\caption{Numerical experiment for a simple linear program}
\label{fig:Illustration}
\end{figure}

\section{Application to Semidefinite Programs} 
\label{section-application-SDP}
In this section, we provide details on the application of \alg{} to solving semidefinite programs (SDPs). SDPs have a wide range of applications across various fields. Rather than reviewing these in depth (see \cite{wolkowicz2012handbook}), we simply emphasize that SDPs form a broad and powerful class of conic programs, encompassing linear programs (LPs) and second-order cone programs (SOCPs) as special cases. Therefore, all the discussions below can be easily adapted to solve any LPs and SOCPs, as well as conic programs involving a mixture of LP, SOCP, and positive semidefinite constraints.  

\subsection{SDPs with a trace constraint} \label{appendix:trace-SDP}
The standard primal and dual SDPs are optimization problems of the form 

\begin{minipage}{0.44\textwidth}
\begin{equation}
    \label{eq:primal-sdp}
    \begin{aligned}
        \min_{X \in \sn } & \quad \innerproduct{C}{X} \\\mathrm{subject~to} & \quad \Amap X = b, \\
        & \quad X \in \snp,
    \end{aligned}
\end{equation}
\end{minipage}
\begin{minipage}{0.49\textwidth}
\begin{equation}
    \label{eq:dual-sdp}
    \begin{aligned}
        \max_{y \in \RR^m} & \quad \innerproduct{b}{y} \\\mathrm{subject~to} & \quad  C - \Ajmap y  \in \snp.
    \end{aligned}
\end{equation}
\end{minipage}

\noindent where  $C \in \sn, b \in \RR^m,$ and the linear map $\Amap: \sn \to \RR^m $ are the problem data. As well see below, applying \alg{} to solve \cref{eq:primal-sdp,eq:dual-sdp} closely resembles the classical {Spectral Bundle Method (\SBM)}, originally introduced in \cite{helmberg2000spectral}; also see recent discussions in \cite{ding2023revisiting,liao2023overview}.

We assume the set of the optimal solutions to \cref{eq:primal-sdp} is compact, then there exists a constant $\gamma \geq 0$ such that $\Trace(\Xstar) \leq \gamma$ for all optimal solution $\Xstar$ in \cref{eq:primal-sdp}. This assumption is mild and less restrictive than that \cite{helmberg2000spectral}, where the equality trace constraint $\Trace(X) = \gamma$ is needed.  
Therefore, adding the constraint $\Trace(X) \leq \gamma$ into \cref{eq:primal-sdp} does not affect the optimal solution set. Let $\Omega = \{X\in \snp : \Trace(X) \leq \gamma\}$. 
We can now consider the following primal and dual problems

\begin{minipage}{0.34\textwidth}
\begin{equation}
    \label{eq:primal-sdp-bounded}
    \begin{aligned}
        \min_{X \in \sn } & \quad \innerproduct{C}{X} \\\mathrm{subject~to} & \quad \Amap X = b, \\
        & \quad X \in \Omega,
    \end{aligned}
\end{equation}
\end{minipage}
\begin{minipage}{0.61\textwidth}
\begin{equation}
    \label{eq:dual-sdp-bounded}
    \begin{aligned}
        -\min_{y \in \RR^m} \; g(y) = & 
        -\innerproduct{b}{y} + \gamma \max\{\lambda_{\max}(\Ajmap y - C),0\}.
    \end{aligned}
\end{equation}
\end{minipage}

It is clear that \cref{eq:primal-sdp-bounded} is of the form of our problem \cref{eq:primal} as the constraint set $\Omega$ becomes compact. 
The fact that \cref{eq:dual-sdp-bounded} is the dual problem of \cref{eq:primal-sdp-bounded} can be seen from the identity 
\begin{equation*}
\begin{aligned}
    \gamma \max\{\lambda_{\max}(\Ajmap y - C),0\} &=   \max\{ \gamma \lambda_{\max}(\Ajmap y - C),0\}  \\
    &= \max_{X \in \Omega} \innerproduct{\Ajmap y - C}{X},\; \forall y \in \RR^m.
\end{aligned}
\end{equation*}
Further details on the trace constraint in primal and dual SDPs can be found in \cite{liao2023overview}. 

\subsection{Applications of \alg{} in solving SDPs}

We can now directly apply \alg{} to solve \cref{eq:primal-sdp-bounded}. At iteration $k$, we have three steps: a) computing a pair of primal and dual candidates; b) testing the descent progress; c) updating the inner approximation. 

\vspace{3pt}
\noindent \textbf{Step a): primal and dual candidate.} We perform the following two steps to acquire a pair of primal and dual candidates $(W_{k+1},y_{k+1})$ 
\begin{equation}
    \label{eq:SBM-dual-subproblem}
    \begin{aligned}
        W_{k+1} & \in \argmin_{X\in \Omega_k} \; \innerproduct{C}{X} + \innerproduct{y_k}{b-\Amap X} + \frac{\rho}{2}\|b - \Amap X\|^2  \\
        z_{k+1}       & = y_k + \rho (b - \Amap W_{k+1}),
    \end{aligned}
\end{equation}
where the inner approximation set $\Omega_{k}$ is parametrized by two matrices $\Bar{X}_k \in \snp$ and $V_k \in \RR^{n \times r}$ with $\Trace(\Bar{X}_k) = 1$ and $V_k^\tr V_k = I_{r}$, where $r < n$ is a predefined number: 
$$
\Omega_k := \{\eta \Bar{X}_k + V_k S V_k^\tr : \eta \geq 0, S \in \mathbb{S}_+^r, \eta + \Trace(S) \leq \gamma\}.
$$
It is clear that $\Omega_k$ is convex and an inner approximation of $\Omega$. Similar to \cref{eq:approximated-dual}, we define the induced approximation dual function 
\begin{equation}
\label{eq:approximated-dual-sdp}
g_k(y) = - \min_{x\in\Omega_k} \innerproduct{C}{X} + \innerproduct{y}{b-\Amap X }. 
\end{equation}

\vspace{3pt}
\noindent \textbf{Step b). Testing the descent progress.} To proceed, we use the same test \cref{eq:test} to decide whether setting $X_{k+1} = W_{k+1}$ and $y_{k+1} = z_{k+1}$ (\textit{descent step}) or $X_{k+1} = X_{k}$ and $y_{k+1} = y_{k}$ (\textit{null step}). If a descent step occurs at step $k$, \cref{prop:connection} confirms that $W_{k+1}$ is an inexact solution to the problem $\min_{X \in \Omega} \mathcal{L}_{\rho}(X,y_k)$. 

\vspace{3pt}
\noindent \textbf{Step c): Updating the inner approximation.} Similar to the spectral bundle method in~\cite{helmberg2000spectral,ding2020spectral,liao2023overview}, we construct the next inner approximation  $\Omega_{k+1}$ by updating the matrices $\Bar{X}_k$ and $V_k$ to $\Bar{X}_{k+1}$ and $V_{k+1}$ respectively. 
Specifically, the dimension $r$ is decomposed as $r = r_{\mathrm{p}} + r_{\mathrm{c}}$ with $r_{\mathrm{p}} \geq 0$ and $r_{\mathrm{c}} \geq 1$. Let one optimal solution to \cref{eq:SBM-dual-subproblem} be 
\begin{subequations}
\begin{equation} \label{eq:SDP-Omega-1}
W_{k+1} = \eta^\star_k \Bar{X}_k + V_k S^\star_k V_k^\tr     
\end{equation}
and write 
\begin{equation}\label{eq:SDP-Omega-2}
S^\star_k = Q_1 \Lambda_2 Q_1^\tr +  Q_2 \Lambda_2 Q_2^\tr    
\end{equation}
as the eigenvalue decomposition, where $\Lambda_1$ consists the largest $r_\mathrm{p}$ eigenvalues and $\Lambda_2 $ consists the rest $r_\mathrm{c} = r - r_{\mathrm{p}}$ of the eigenvalues. The new matrix $\Bar{X}_{k+1}$ is set as 
\begin{equation} \label{eq:SDP-Omega-3}
    \Bar{X}_{k+1} = \frac{1}{\eta^\star_k + \Trace(\Lambda_2)} (\eta^\star_k \Bar{X}_{k} + V_k Q_2 \Lambda_2 Q_2^\tr V_k^\tr),
\end{equation}
which satisfies $\Trace(\Bar{X}_{k+1}) = 1$. On the other hand, the new orthonormal matrix $V_{k+1}$ is set to span the top $r_{\mathrm{c}}$ orthonormal eigenvectors of $\Ajmap z_{k+1} - C$ and $V_kQ_1$, i.e., 
\begin{equation} \label{eq:SDP-Omega-4}
V_{k+1} = \mathsf{orth}(\begin{bmatrix}
    v_1, \ldots, v_{r_{\mathrm{c}}},V_kQ_1
\end{bmatrix}),
\end{equation}
where $\mathsf{orth}$ denotes an orthonormalization procedure and $v_{1},\ldots,v_{r_{\mathrm{c}}}$ are the top $r_{\mathrm{c}}$ eigenvectors of $\Ajmap z_{k+1} - C$. Note that the number $r_{\mathrm{p}}$ is allowed to be zero while $r_{\mathrm{c}}$ is required to be at least greater than one.     
\end{subequations}

These update of $\bar{X}_{k+1}$ and $V_{k+1}$ can be verified to satisfy \cref{assump:AALM}. 

\begin{theorem}
\label{prop:sbm-satisfied}
The update of the inner approximation $\Omega_k \subseteq \Omega$ from \cref{eq:SDP-Omega-1,eq:SDP-Omega-2,eq:SDP-Omega-3,eq:SDP-Omega-4}~satisfies all requirements in \cref{assump:AALM}. 
\end{theorem}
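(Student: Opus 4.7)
The plan is to verify the four conditions of \cref{assump:AALM} in turn, with the easy conditions (proper set and inner approximation) dispatched first, the dual-information condition handled via the top eigenvector, and the null-step primal-information condition treated as the main technical step via a change-of-basis argument.

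For condition (1), $\Omega_{k+1}$ is the image of the compact convex set $\{(\eta, S) : \eta \geq 0,\, S \in \mathbb{S}_+^r,\, \eta + \Trace(S) \leq \gamma\}$ under the linear map $(\eta, S) \mapsto \eta \Bar{X}_{k+1} + V_{k+1} S V_{k+1}^\tr$, and is therefore convex and closed. For condition (2), any element of $\Omega_{k+1}$ is a sum of two positive semidefinite matrices and hence PSD; moreover, $\Trace(\Bar{X}_{k+1}) = 1$ together with $V_{k+1}^\tr V_{k+1} = I_r$ gives $\Trace(\eta \Bar{X}_{k+1} + V_{k+1} S V_{k+1}^\tr) = \eta + \Trace(S) \leq \gamma$, so $\Omega_{k+1} \subseteq \Omega$.

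For condition (3), the explicit form of the dual in \cref{eq:dual-sdp-bounded} shows that a maximizer $v_{k+1}$ of $-\calL(\cdot, z_{k+1})$ over $\Omega$ is either the zero matrix (when $\lambda_{\max}(\Ajmap z_{k+1} - C) \leq 0$) or $\gamma q_1 q_1^\tr$ with $q_1$ a top eigenvector of $\Ajmap z_{k+1} - C$. The zero matrix lies trivially in $\Omega_{k+1}$. In the second case, $q_1$ is among the vectors orthonormalized to form $V_{k+1}$ in \cref{eq:SDP-Omega-4}, so $q_1 = V_{k+1} u$ for some unit vector $u \in \RR^r$; choosing $\eta = 0$ and $S = \gamma u u^\tr$ yields $v_{k+1} = V_{k+1} S V_{k+1}^\tr \in \Omega_{k+1}$ with $\eta + \Trace(S) = \gamma$.

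The main obstacle is condition (4), which asks that after a null step $W_{k+1} \in \Omega_{k+1}$. Starting from $W_{k+1} = \eta_k^\star \Bar{X}_k + V_k(Q_1 \Lambda_1 Q_1^\tr + Q_2 \Lambda_2 Q_2^\tr) V_k^\tr$, I will regroup via the definition of $\Bar{X}_{k+1}$ in \cref{eq:SDP-Omega-3} to obtain $W_{k+1} = (\eta_k^\star + \Trace(\Lambda_2))\Bar{X}_{k+1} + V_k Q_1 \Lambda_1 Q_1^\tr V_k^\tr$. The key structural fact is that the construction of $V_{k+1}$ in \cref{eq:SDP-Omega-4} forces the columns of $V_k Q_1$ to lie in the column span of $V_{k+1}$, so $V_k Q_1 = V_{k+1} M$ for some $M \in \RR^{r \times r_{\mathrm{p}}}$. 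Because $V_k$ has orthonormal columns and $Q_1$ is orthonormal, $V_k Q_1$ itself has orthonormal columns, and combined with $V_{k+1}^\tr V_{k+1} = I_r$ this pins down $M^\tr M = I_{r_{\mathrm{p}}}$. Setting $\eta := \eta_k^\star + \Trace(\Lambda_2)$ and $S := M \Lambda_1 M^\tr \in \mathbb{S}_+^r$ then gives $W_{k+1} = \eta \Bar{X}_{k+1} + V_{k+1} S V_{k+1}^\tr$, and the trace budget closes via $\Trace(S) = \Trace(\Lambda_1 M^\tr M) = \Trace(\Lambda_1)$, so $\eta + \Trace(S) = \eta_k^\star + \Trace(S_k^\star) \leq \gamma$ by feasibility of $W_{k+1}$ in $\Omega_k$. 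Thus $W_{k+1} \in \Omega_{k+1}$, completing the verification.
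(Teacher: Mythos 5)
Your proof is correct and follows essentially the same route as the paper's: the zero-matrix versus top-eigenvector case split for the dual-information condition, and the change-of-basis argument $V_k Q_1 = V_{k+1} M$ with $M^\tr M = I_{r_{\mathrm{p}}}$ (the paper's matrix $l$) for the null-step condition. You are in fact slightly more careful than the paper, which leaves the closedness of $\Omega_{k+1}$ and the trace budget $\eta + \Trace(S) \leq \gamma$ in condition (4) implicit.
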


The proof of \cref{prop:sbm-satisfied} requires some matrix analysis techniques, and we provide the details in \Cref{appendix:SDP-proof}.  
The above constructions are indeed the same as \SBM{}. In this sense, \SBM{} can be viewed as one algorithm in our bundle-based Augmented Lagrangian framework, and all the convergence guarantees in \cref{theorem:convergence-bundle-dual,thm:average-iterate,thm:linear-convergence} hold true when solving SDPs.

\begin{remark}[Linear convergences in solving SDPs]
    In the context of SDPs, the quadratic growth property has been known to hold locally around the solution set \cite{cui2016asymptotic,cui2017quadratic,ding2023revisiting,liao2024inexact}. Building upon the quadratic growth condition, iALM-based algorithms \cref{alg:alm} for SDPs have been shown to enjoy linear convergence \cite{cui2019r,yang2015sdpnal+}. Moreover, it is also shown that semi-smooth Netwon-CG methods are effective in solving the subproblem in iALM \cite{yang2015sdpnal+}. However, the semi-smooth Newton-CG method requires the computation of the second-order information of the Lagrangian function, which can be complicated and computationally challenging. On the other hand, recently, the \SBM{} has been revisited and shown to enjoy linear convergence \cite{ding2023revisiting,liao2023overview}. The key observation lies in the fact that the approximation function $g_k$ in \eqref{eq:approximated-dual-sdp} captures the true function $g$ in \eqref{eq:dual-sdp-bounded} up to a quadratic error, i.e., there exists a $\mu > 0$ such that  
    \begin{align*}
        g_k(y)\leq g(y) \leq g_k(y) + \frac{\mu}{2} \|y -y_k\|^2,
    \end{align*}
    provided the constraint set $\Omega_k$ in $g_k$ captures the rank of the primal solutions in \cref{eq:primal-sdp} and strict complementarity holds \cite{ding2023revisiting}. This suggests that \SBM{} is particularly effective when the primal SDP \cref{eq:primal-sdp} admits low-rank solutions as the subproblem \eqref{eq:SBM-dual-subproblem} can be solved efficiently. We also provide some numerical evidence in \cref{fig:numerical-experiment}. \hfill $\square$
\end{remark}

\subsection{Proof of \Cref{prop:sbm-satisfied}} \label{appendix:SDP-proof}

    Firstly, recall that we have $\Omega_k = \{\eta \Bar{X}_k + V_k S V_k^\tr : \eta \geq 0, S \in \mathbb{S}_+^r, \eta + \Trace(S) \leq \gamma\}$ and $V_k$ is orthonormal, so clearly $\Omega_k  \subseteq \Omega = \{X \in \snp : \Trace(X) \leq \gamma\}$.    
    Secondly, we argue that we can find $v_{k+1} \in \Omega_{k+1}$ such that 
        \begin{align*}
            \calL(v_{k+1},z_{k+1}) & = -g(z_{k+1})  \\
            & = -\innerproduct{b}{z_{k+1}} + \gamma \max\{\lambda_{\max}(\Ajmap z_{k+1}-C),0\}.
        \end{align*}
        Suppose $\lambda_{\max}(\Ajmap z_{k+1}-C) < 0$. Then choosing $v_{k+1} = 0 \in \Omega_{k+1}$, we see that
        \begin{equation*}
            \begin{aligned}
                \calL(v_{k+1},z_{k+1})
                 &= \innerproduct{b}{z_{k+1}}  \\
                 &= \innerproduct{b}{z_{k+1}} - \gamma \max\{\lambda_{\max}(\Ajmap z_{k+1}-C),0\} \\
                 &= -g(z_{k+1}).
            \end{aligned}
        \end{equation*}
        On the other hand, suppose $\lambda_{\max}(\Ajmap z_{k+1}-C) \geq 0$. Let $v_+ \in \RR^n$ such that $v_+^\tr(\Ajmap z_{k+1}-C)v_+ = \lambda_{\max}(\Ajmap z_{k+1}-C)$. As $v_+  \in \mathrm{Span}(V_{k+1})$ by the design of $V_{k+1}$, we know there exists a unit vector $e \in \RR^r$ such that $V_{k+1}e = v_+$. Let $\eta = 0$, $S = \gamma e e^\tr$, and  $$v_{k+1}  = \eta \bar{X}_{k+1} + V_{k+1} S V_{k+1}^\tr = \gamma V_{k+1}  e e^\tr V_{k+1}^\tr = \gamma v_+ v_+^\tr.$$ We see that $  v_{k+1} \in \Omega_{k+1}$ and 
            \begin{equation*}
            \begin{aligned}
            \calL (v_{k+1},z_{k+1})& = \innerproduct{C}{v_{k+1}} + \innerproduct{z_{k+1}}{b-\Amap v_{k+1} } \\
                & = \innerproduct{C}{\gamma v_+ v_+^\tr} + \innerproduct{z_{k+1}}{b-\Amap (\gamma v_+ v_+^\tr)} \\
                & = \innerproduct{b}{z_{k+1}} + \innerproduct{C}{\gamma v_+ v_+^\tr} - \innerproduct{z_{k+1}}{\Amap(\gamma v_+ v_+^\tr)} \\
                & = \innerproduct{b}{z_{k+1}} + \innerproduct{C- \Ajmap z_{k+1}}{\gamma v_+ v_+^\tr} \\
                & = \innerproduct{b}{z_{k+1}} - \gamma \innerproduct{ \Ajmap z_{k+1} -C}{ v_+ v_+^\tr} \\
                & = \innerproduct{b}{z_{k+1}} - \gamma \lambda_{\max}(\Ajmap z_{k+1}-C).
            \end{aligned}
        \end{equation*}    
Lastly, by the design of $V_{k+1}$, there exists a $l \in \RR^{r \times r_{\mathrm{p}}}$  such that $V_{k+1}l=V_kQ_1$ and $l^\tr l = I_{r_{\mathrm{p}}}\in \mathbb{S}^{r_\mathrm{p}}$. Choosing $\eta = \eta^\star_k + \Trace(\Lambda_2)$ and $S = l \Lambda_1 l^\tr$, we see that $$\eta \Bar{X}_{k+1} + V_{k+1} S V_{k+1} = \eta^\star_k \bar{X}_{k} + V_k Q_2 \Lambda_2 Q_2^\tr V_k^\tr + V_k Q_1 \Lambda_1 Q_1^\tr V_k^\tr =W_{k+1} \in \Omega_{k+1},$$
where the first equality applies \eqref{eq:SDP-Omega-3}.

\section{Details in \cref{section:experiment} and additional numerical experiments}
\label{apx:section:detail-numerical}
This section discusses how we generate the SDPs in \cref{section:experiment} in the form 
\begin{align*}
    \min_{X \in \sn} \{ \Trace(CX) : \Amap X = b, \Trace(X) \leq a ,X \in \snp\},
\end{align*}
and present additional experiments in \cref{fig:numerical-experiment-more-1}. All numerical experiments are conducted on a PC with a 12-core Intel i7-12700K CPU@3.61GHz and 32GB RAM. The algorithm is implemented in Matlab 2022b. 
\subsection{Linear convergences for randomly generated SDPs}
\label{apx:subsection-randomly}
Our first experiment considers the randomly generated SDP with dimension $n = m.$ To construct the problem data, We first generate the constraint matrices $A_1, \ldots, A_m \in \sn$ with each entry generated by a normal distribution with mean $0$ and variance $1$.
To further guarantee the primal problem satisfies Slater's condition, we let the diagonal elements of $A_1, \ldots,$ and $ A_m$ be zeros. The linear map $\Amap : \sn \to \RR^m$ is then defined as 
\begin{align*}
    \Amap X = \begin{bmatrix}
        \innerproduct{A_1}{X},\ldots,\innerproduct{A_m}{X}
    \end{bmatrix}^\tr.
\end{align*}
To generate an optimal solution, we construct a positive definite matrix \( S \in \mathcal{S}_+^n \) with the eigenvalue decomposition
\[
S =\lambda_1  v_1  v_1^\tr + \ldots + \lambda_n v_n  v_n^\tr.
\]

We then set \( X^\star = \lambda_1 v_1 v_1^\tr \), \( Z^\star = \sum_{i=2}^n \lambda_i v_i v_i^\tr \), and \( b = \Amap X^\star \). To generate the cost function matrix \( C \), we generate \( y^\star \in \mathbb{R}^m \) with each entry generated by a uniform distribution between zero and one, and we set \( C = Z^\star + \mathcal{A}^*y^\star  \). Finally, the scalar $a$ is set as some constant such that $\Trace(\Xstar) < a$. 

We note that the solution pair \( (X^\star, y^\star) \) generated above is a pair of optimal solutions as $\Amap \Xstar = b$ and 
\begin{align*}
    \innerproduct{C}{\Xstar} & = \innerproduct{Z^\star + \mathcal{A}^*y^\star}{\Xstar}  \\
    & = \innerproduct{Z^\star}{\Xstar} + \innerproduct{\mathcal{A}^*y^\star}{\Xstar} \\
    & = 0 + \innerproduct{\Ajmap \Xstar}{ y} \\
    & = \innerproduct{b}{\ystar}\\
    &=  \innerproduct{b}{\ystar} + \min_{X \in \snp, \Trace(X) \leq a} \innerproduct{C-\Ajmap \ystar}{X},
\end{align*}
where the fourth equality uses the definition of an adjoint map, i.e., $\innerproduct{\Ajmap y}{ X} = \innerproduct{y}{\Amap X}, \forall y \in \RR^m, X \in \sn$, and the last inequality is due to $C - \Ajmap \ystar $ is positive semidefinite and $a > \Trace(\Xstar) \geq \lambda_1$, which implies $0 = \min_{X \in \snp, \Trace(X) \leq a} \innerproduct{C-\Ajmap \ystar}{X}$. Thus, the generated SDP does contain a rank-one solution $\Xstar$. 

Aside from the description of the problem generation, this subsection also demonstrates that exact linear convergence happens numerically (as the theoretical guarantee in \cref{subsection:linear}) and \alg{} can achieve extremely high accuracy. As shown in \cref{thm:linear-convergence}, linear convergence happens when quadratic growth \eqref{eq:QG} holds for the dual function $g$ and the dual approximated function $g_k$ captures the true function $g$ up to a quadratic term \eqref{eq:quadratic-close}. Those conditions are indeed satisfied in the application of SDP as shown in \cite{ding2023revisiting} when the iterate is close to the solution set. We consider four SDPs with the dimensions ranging from $10$ to $40$. Each SDP is constructed in the same way as the procedure introduced above to admit at least a rank-one solution. We first use the solver Mosek \cite{mosek} to obtain a $10^{-5}$-solution and run \alg{} with that initialization. In \cref{fig:numerical-experiment}, we present the evolution of the residuals (including the primal cost value gap, primal feasibility, and the dual cost value gap). We see that linear convergence does happen. In particular, the dual cost value gap ($|g(y) - g^\star|/|g^\star|$) achieves the accuracy of $10^{-11}$ in all instances, which is more accurate than commercial solvers developed based on the interior point method, such as Mosek \cite{mosek}. Typically, commercial solvers can only achieve the accuracy of $10^{-9}$ due to the numerical error that occurred in the algorithm. Thus, \alg{} might serve as a downstream algorithm to achieve a high-accuracy solution.

\begin{figure}[t]
     \centering
{\includegraphics[width=1\textwidth]
{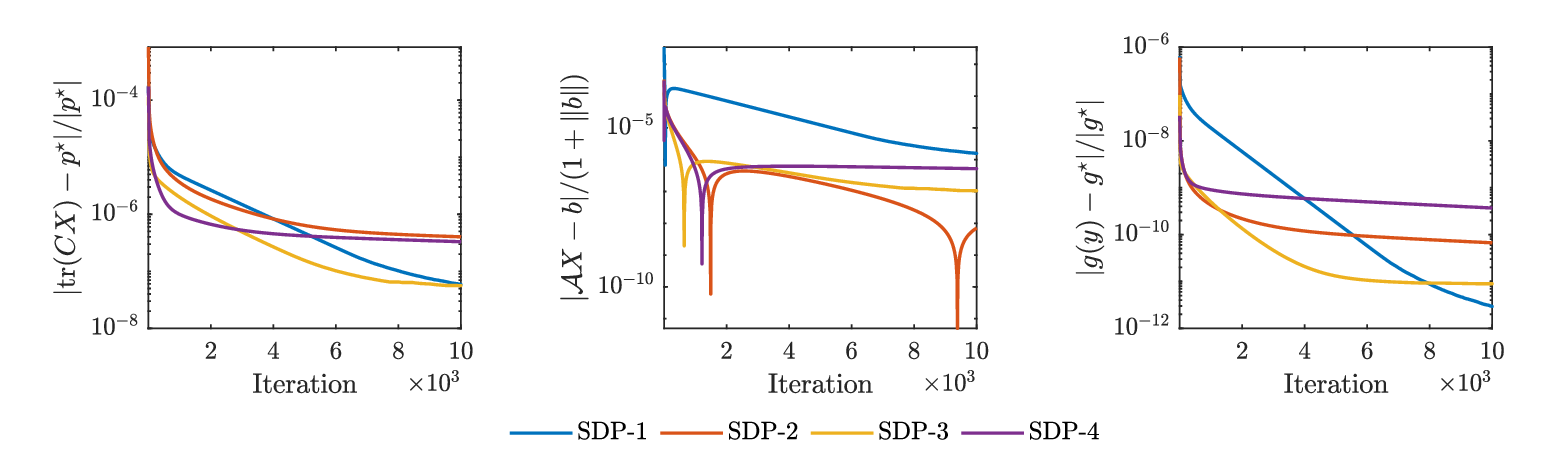}}
\caption{Linear convergence for SDPs.}
\label{fig:numerical-experiment}
\end{figure}

\subsection{Further numerical experiments on practical problems}

\subsubsection{Matrix Completion}
Our second numerical experiment considers the nuclear norm minimization for matrix completion problem \cite{candes2012exact} with the form 
\begin{equation*}
    \begin{aligned}
         \min_{W_1, W_2} & \quad \Trace(W_1) + \Trace(W_2)\\ \mathrm{subject~to} & \quad X_{ij} = (X_{\sharp})_{ij}, \;\forall (i,j) \in \mathcal{G}, \\
         &\quad \begin{bmatrix}
             W_1,X \\
             X^\tr,W_2 
         \end{bmatrix} \in \mathbb{S}_+^{n}.
    \end{aligned}
\end{equation*} 
where $X_{\sharp}$ is the ground truth matrix that we wish to recover, and $\mathcal{G} \subseteq [n] \times [n]$ is an index set of the observed entries. We first generate a vector $w_{\sharp} \in \RR^{n/2}$, and then set $X_{\sharp} =w_{\sharp} w_{\sharp}^\tr$ and let the probability of observing each $X_{\sharp}$ entry as $0.2$, which defines the linear map $\Amap$ and the vector $b$. The cost matrix $C$ is simply $I_{n}$. We set the constant $a$ as some number such that $a > 2\Trace(X_{\sharp})$ as it is known that the matrix $ \begin{bmatrix}
    X_{\sharp} & X_{\sharp}\\
    X_{\sharp} & X_{\sharp}
\end{bmatrix}$
solves the matrix completion problem with high probability \cite[Theorem 5.1]{ding2021simplicity}.

\subsubsection{Binary Quadratic Program}
\label{apx:subsection:BQP}
Our third numerical experiment considers the binary quadratic program (BQP) with the form 
\begin{align*}
 \min_{x \in \RR^n} \quad &x^\tr Q x + q^\tr x \\
\text{subject~to} \quad & x_i^2 = 1, \quad i = 1, \ldots, n,
\end{align*}
where $Q \in \mathbb{S}^n$ and $q \in \RR^n$ are problem data. BQP is a typical example in the field of combinatorial optimization. As BQP is also a polynomial optimization, it can be relaxed as an SDP using the moment relaxation \cite{lasserre2001global}. It is also known that the SDP raised from the moment relaxation admits low-rank solutions. When the relaxation is second-order, empirical evidence shows that the moment relaxation is tight. Each entry of the symmetric matrix $Q$ and the vector $q$ is generated randomly following the uniform distribution with mean $0$ and variance $1$.

\subsubsection{Polynomial optimization over a sphere}
Our fourth numerical experiment considers optimizing a polynomial over a unit sphere 
\begin{align*}
 \min_{x\in \RR^n} \quad & p(x)  \\
\text{subject~to} \quad & x^\tr x = 1,
\end{align*}
where $p:\RR^n \to \RR$ is a polynomial with the degree of four and $n$ variables. Similar to \cref{apx:subsection:BQP}, we consider the second-order moment relaxation of the above polynomial optimization, which is an SDP. As $p$ is a polynomial, it can be written as 
\begin{align*}
    p(x) = \mathbf{c}^\tr [\mathbf{x}]
\end{align*}
where $[\mathbf{x}]$ is the standard monomial of degree four with $n$ variables and $\mathbf{c}$ is the corresponding coefficient vector. We then generate the vector $\mathbf{c}$ randomly with each entry drawn from a normal distribution with mean $0$ and variance $1$.

\begin{figure}[t]
     \centering
     {\includegraphics[width=0.49\textwidth]
{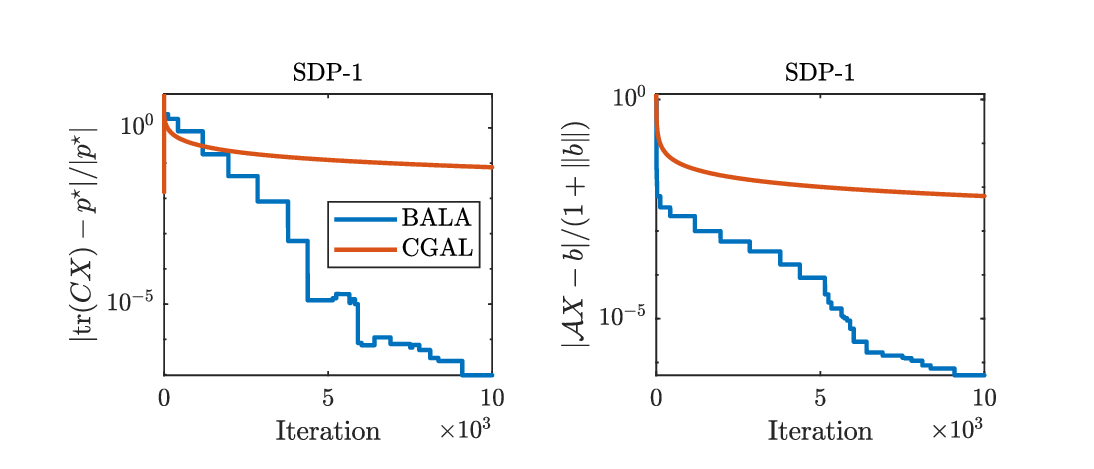}}
{\includegraphics[width=0.49\textwidth]
{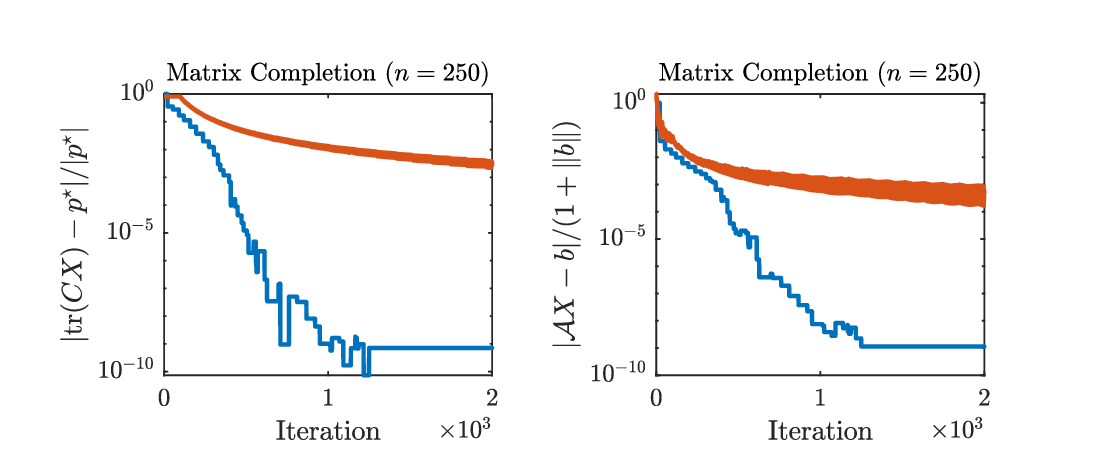}}
{\includegraphics[width=0.49\textwidth]
{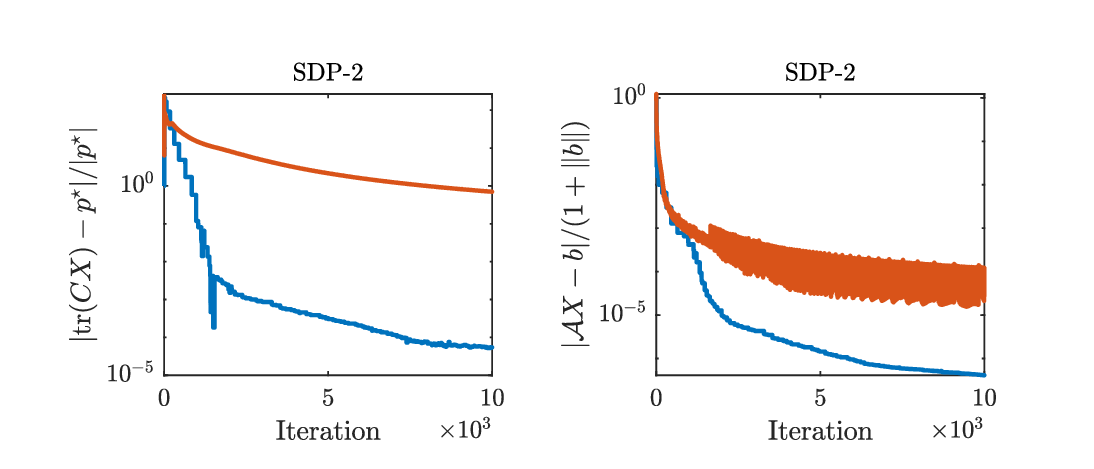}}
{\includegraphics[width=0.49\textwidth]
{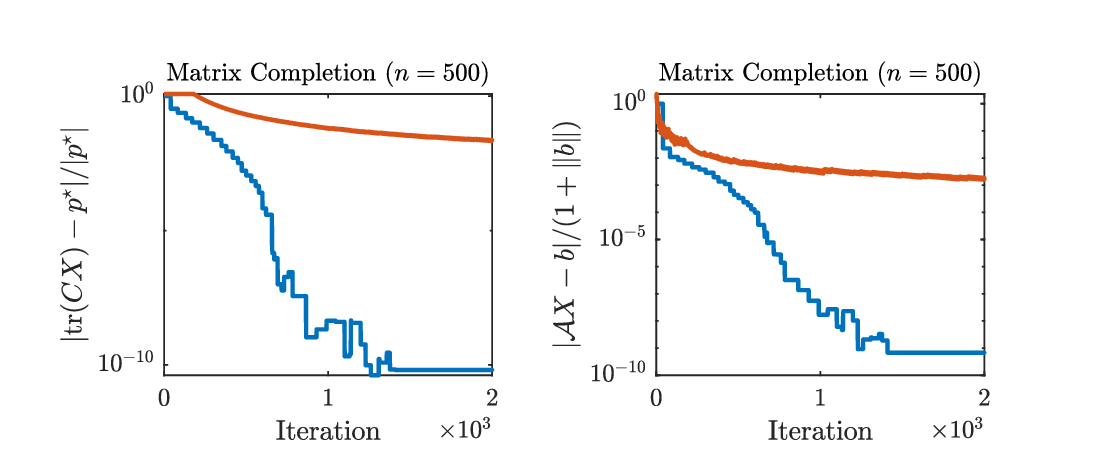}}
{\includegraphics[width=0.49\textwidth]
{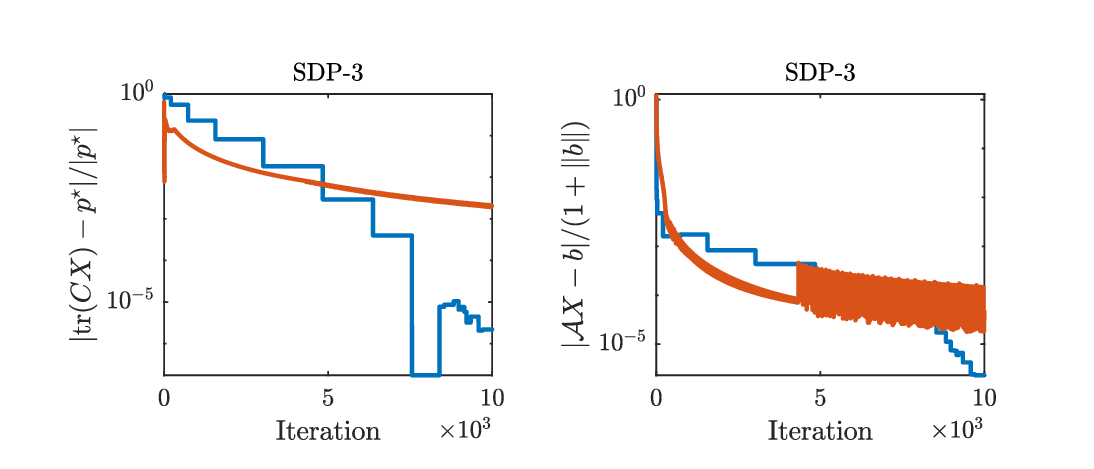}}
{\includegraphics[width=0.49\textwidth]
{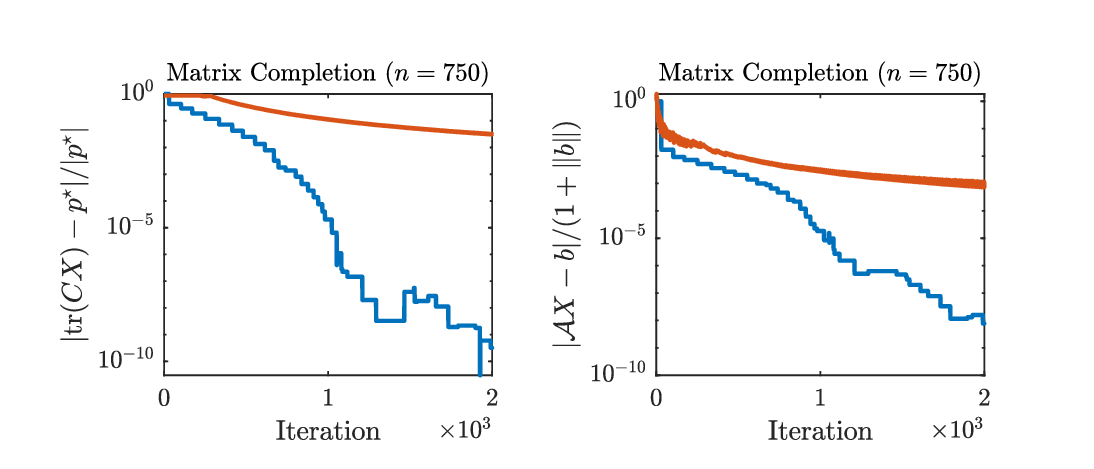}}
     {\includegraphics[width=0.49\textwidth]
{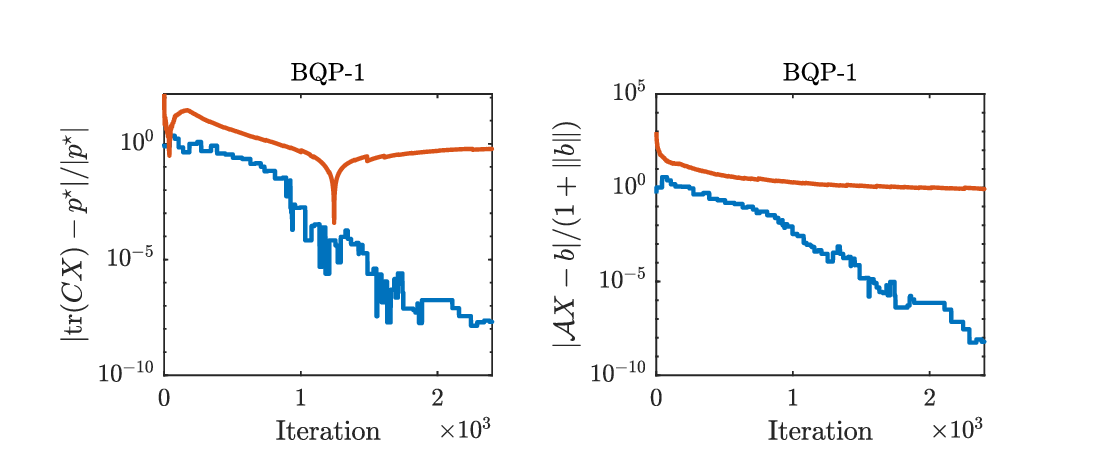}}
{\includegraphics[width=0.49\textwidth]
{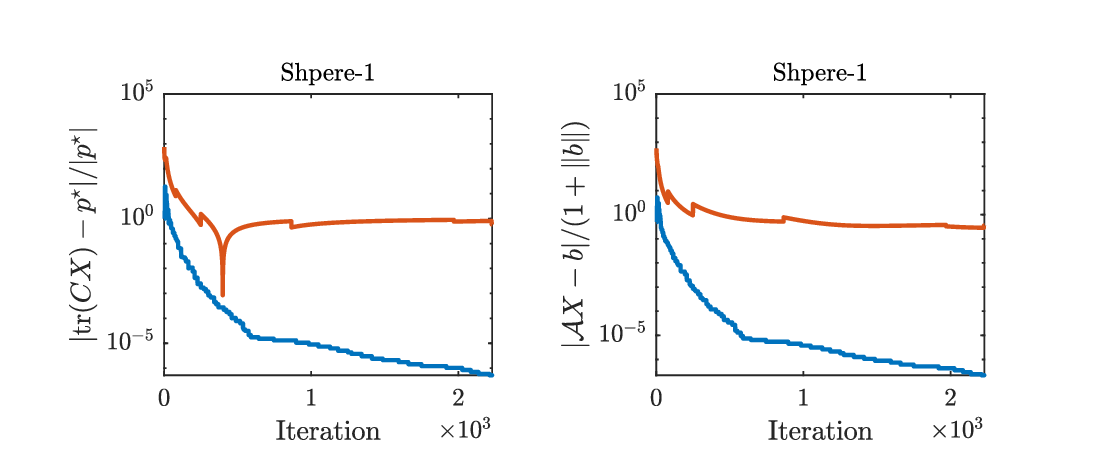}}
{\includegraphics[width=0.49\textwidth]
{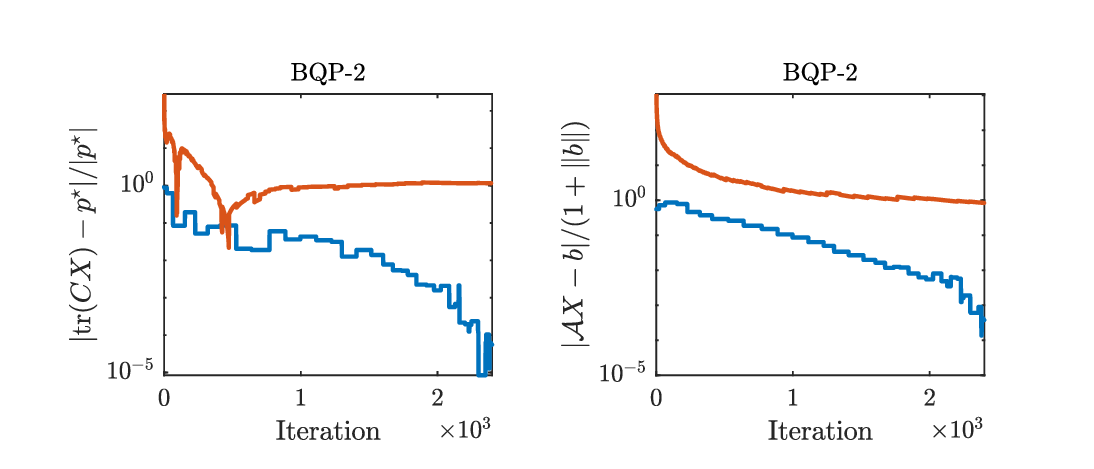}}
{\includegraphics[width=0.49\textwidth]
{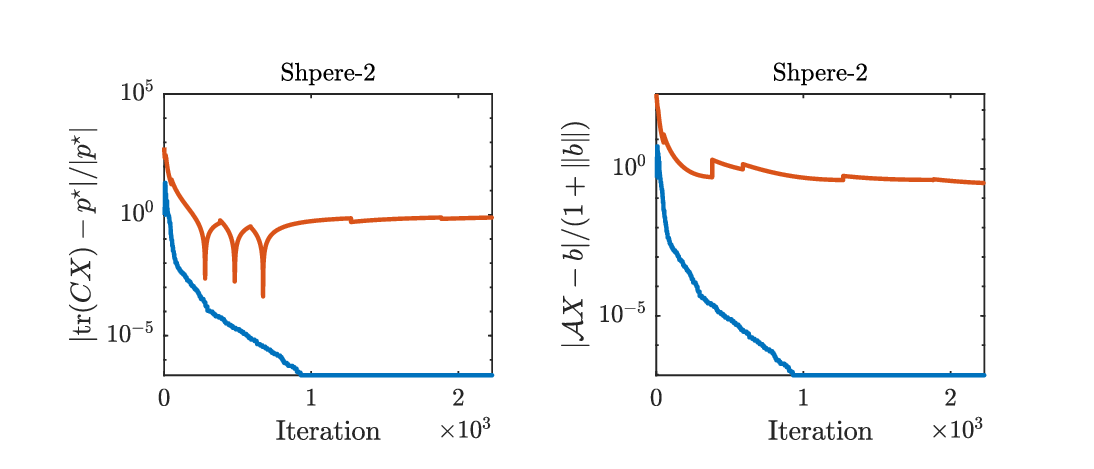}}
{\includegraphics[width=0.49\textwidth]
{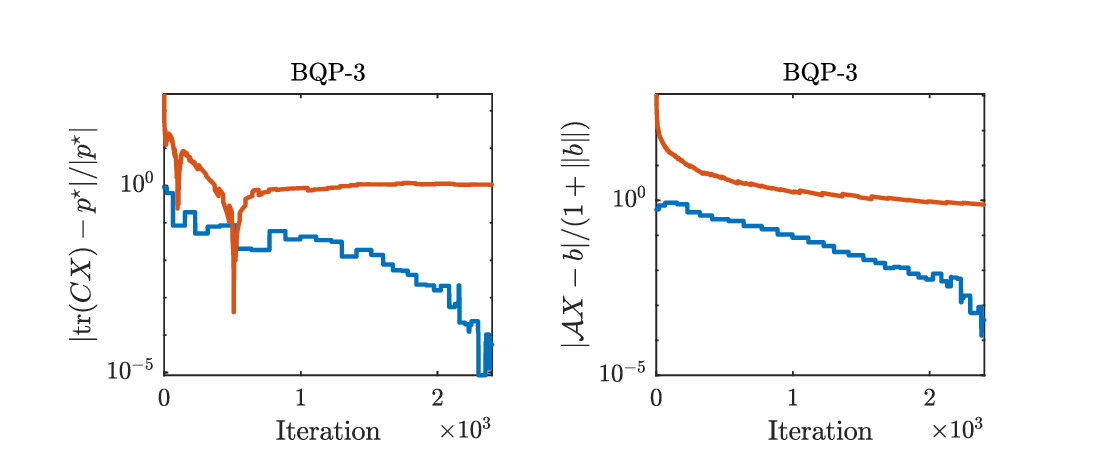}}
{\includegraphics[width=0.49\textwidth]
{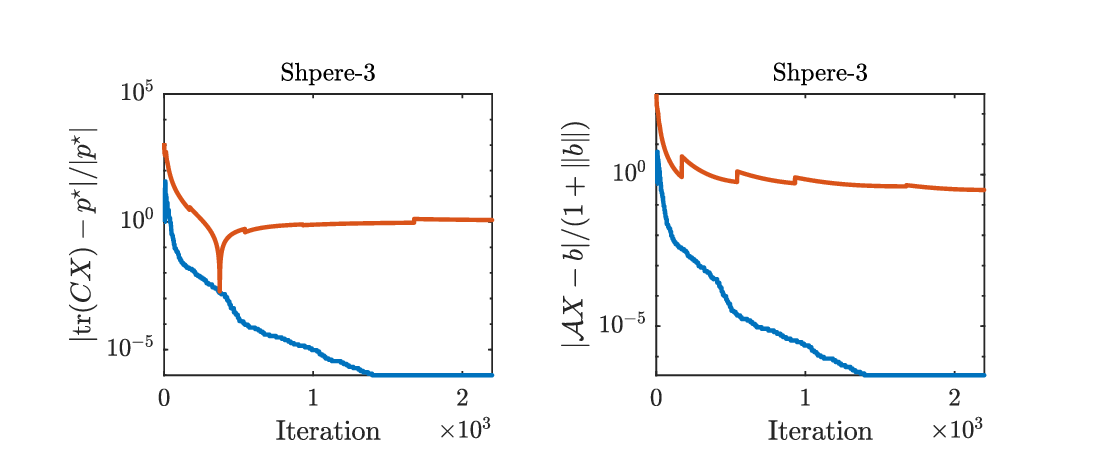}}
\caption{Additional numerical experiments. The blue curves are from \alg{}, and the red curves are from \texttt{CGAL} \cite{yurtsever2021scalable}.}
\label{fig:numerical-experiment-more-1}
\end{figure}

\end{document}